\title{Associahedra via spines}
\thanks{
Vincent Pilaud was partially supported by grant MTM2011-22792 of the Spanish MICINN, by the French ANR grant EGOS (12 JS02 002 01), and by European Research Project ExploreMaps (ERC StG 208471). \\ 
\indent Carsten Lange was supported by the TEOMATRO grant ANR-10-BLAN 0207.}
\author{Carsten Lange}
\address{Technische Universit\"at M\"unchen}
\email{clange@math.fu-berlin.de}
\urladdr{http://geom.mi.fu-berlin.de/lange/}
\author{Vincent Pilaud}
\address{CNRS \& \'Ecole Polytechnique}
\email{vincent.pilaud@lix.polytechnique.fr}
\urladdr{http://www.lix.polytechnique.fr/~pilaud/}
\newtheorem{theorem}{Theorem}
\newtheorem{corollary}[theorem]{Corollary}
\newtheorem{proposition}[theorem]{Proposition}
\newtheorem{lemma}[theorem]{Lemma}
\theoremstyle{definition}
\newtheorem{example}[theorem]{Example}
\newtheorem{remark}[theorem]{Remark}
\newtheorem{example*}[]{Example}
\newenvironment{exm}[2]{\setcounter{example*}{#1-1}\begin{example*}[#2]}{\end{example*}}
\newcommand{\R}{\mathbb{R}} 
\newcommand{\fS}{\mathfrak{S}} 
\newcommand{\cN}{\mathcal{N}} 
\newcommand{\set}[2]{\left\{ #1 \;\middle|\; #2 \right\}} 
\newcommand{\bigset}[2]{\big\{ #1 \;\big|\; #2 \big\}} 
\newcommand{\biggset}[2]{\bigg\{ #1 \;\bigg|\; #2 \bigg\}} 
\newcommand{\ssm}{\smallsetminus} 
\newcommand{\dotprod}[2]{\langle \, #1 \, | \, #2 \, \rangle} 
\newcommand{\symdif}{\, \triangle \,} 
\newcommand{\eqdef}{\mbox{\,\raisebox{0.2ex}{\scriptsize\ensuremath{\mathrm:}}\ensuremath{=}\,}} 
\newcommand{\defeq}{\mbox{~\ensuremath{=}\raisebox{0.2ex}{\scriptsize\ensuremath{\mathrm:}} }} 
\newcommand{\polar}{^\diamond} 
\newcommand{\simplex}{\triangle} 
\newcommandx{\Asso}[1][1=\ver{P}]{\mathsf{Asso}(#1)} 
\newcommandx{\Perm}[1][1=n]{\mathsf{Perm}(#1)} 
\newcommandx{\Para}[1][1=n]{\mathsf{Para}(#1)} 
\newcommandx{\Defo}[1][1={\{z_J\}_{\varnothing \ne J \subseteq [n+1]}}]{\mathsf{Defo}\!\left(#1\right)} 
\newcommand{\chor}{\mathsf{R}} 
\newcommand{\Abo}{\mathsf{A}} 
\newcommand{\Bel}{\mathsf{B}} 
\newcommand{\abo}{\mathsf{a}} 
\newcommand{\bel}{\mathsf{b}} 
\newcommand{\Down}{\mathsf{D}} 
\newcommand{\Up}{\mathsf{U}} 
\newcommand{\up}{\mathsf{u}} 
\newcommand{\ver}[1]{\mathbf{#1}} 
\newcommand{\nod}[1]{\mathbf{#1}^*} 
\newcommand{\spine}[1]{{#1}^*} 
\newcommand{\leveledSpine}[1]{{#1}^\circledast} 
\newcommand{\Out}{\mathrm{out}} 
\newcommand{\In}{\mathrm{in}} 
\newcommand{\conv}{\mathsf{conv}} 
\newcommand{\minTriang}{{\underline T}} 
\newcommand{\minDiag}{{\underline \delta}} 
\newcommand{\maxTriang}{{\overline T}} 
\newcommand{\maxDiag}{{\overline \delta}} 
\newcommand{\flipGraph}{\mathcal{G}} 
\newcommand{\surjectionPermAsso}{\kappa} 
\newcommand{\surjectionAssoPara}{\lambda} 
\newcommand{\leveledBijection}[1]{\kappa(#1)^\circledast} 
\newcommand{\face}{\b{f}} 
\newcommand{\Euler}{\mathrm{Eul}} 
\newcommand{\Narayana}{\mathrm{Nar}} 
\newcommand{\contact}[1]{{#1}^{\#}} 
\newcommand{\brick}{\mathsf{b}} 
\newcommand{\Brick}{\mathsf{B}} 
\newcommand{\dihedral}{\mathbb{D}} 
\newcommand{\lef}{\textsf{l}} 
\newcommand{\mi}{\textsf{m}} 
\newcommand{\rig}{\textsf{r}} 
\DeclareMathOperator{\cone}{cone} 
\newcommand{\fref}[1]{Figure~\ref{#1}} 
\newcommand{\ie}{\textit{i.e.}~} 
\newcommand{\eg}{\textit{e.g.}~} 
\newcommand{\ordinal}{\textsuperscript{th}} 
\newcommand{\ex}{^{\textrm{ex}}} 
\newcommand{\loday}{^{\textrm{Lod}}} 
\definecolor{darkblue}{rgb}{0,0,0.7} 
\renewcommand{\b}[1]{\mathbf{#1}} 
\newcommand{\darkblue}{\color{darkblue}} 
\newcommand{\defn}[1]{\emph{\darkblue #1}} 
\renewcommand{\paragraph}[1]{\bigskip\noindent\textbf{#1 ---}}
\definecolor{darkgreenpic}{rgb}{0,0.54,0.18} 
\definecolor{darkbluepic}{rgb}{0.04,0.18,0.54} 
\begin{document}

\vspace*{-1.1cm}

\begin{abstract}
An associahedron is a polytope whose vertices correspond to triangulations of a convex polygon and whose edges correspond to flips between them. Using labeled polygons, C.~Hohlweg and C.~Lange constructed various realizations of the associahedron with relevant properties related to the symmetric group and the permutahedron. We introduce the spine of a triangulation as its dual tree together with a labeling and an orientation. This notion extends the classical understanding of the associahedron via binary trees, introduces a new perspective on C.~Hohlweg and C.~Lange's construction closer to J.-L.~Loday's original approach, and sheds light upon the combinatorial and geometric properties of the resulting realizations of the associahedron. It also leads to noteworthy proofs which shorten and simplify previous approaches.

\medskip
\noindent
{\sc keywords.}
Associahedra, polytopal realizations, generalized permutahedra.
\end{abstract}

\maketitle

Associahedra were originally defined as combinatorial objects by J.~Stasheff in~\cite{Stasheff}, and later realized as convex polytopes through several different geometric constructions~\cite{Lee, GelfandKapranovZelevinsky, BilleraFillimanSturmfels, Loday, HohlwegLange, PilaudSantos-brickPolytope, CeballosSantosZiegler}, reflecting deep connections to a great variety of mathematical disciplines. They belong to the world of Catalan combinatorics, which can be described by several different equivalent models, all counted by the Catalan number $C_{n+1} = \frac{1}{n+2}\binom{2n+2}{n+1}$, such as parenthesizings of a non-associative product, Dyck paths, binary trees, triangulations, non-crossing partitions, etc~\cite[Exercice~6.19]{Stanley}. Among the plethora of different approaches, the following description suits best our purposes. An \defn{$n$-dimensional associahedron} is a simple polytope such that the inclusion poset of its non-empty faces is isomorphic to the reverse inclusion poset of the dissections of a convex~$(n+3)$-gon~$\ver{P}$ (\ie the crossing-free sets of internal diagonals of~$\ver{P}$). See \fref{fig:associahedra} for $3$-dimensional examples. For our purposes, it is important to keep in mind that the vertices of the associahedron correspond to the triangulations of~$\ver{P}$, its edges correspond to the flips between these triangulations, and its facets correspond to the internal diagonals of~$\ver{P}$. Moreover a vertex belongs to a facet if and only if the corresponding triangulation contains the corresponding internal diagonal.

\begin{figure}[h]
  \centerline{\includegraphics[width=.9\textwidth]{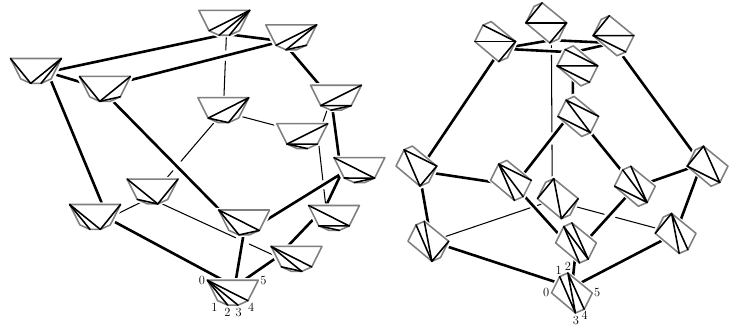}}
  \caption{Two polytopal realizations of the $3$-dimensional associahedron with vertices labeled by triangulations of convex hexagons described in~\cite{HohlwegLange}.}
  \label{fig:associahedra}
\end{figure}

S.~Shnider and S.~Sternberg~\cite{ShniderSternberg} (see also~\cite[Appendix~B]{StasheffShnider}) found an elegant construction of associahedra, and J.-L.~Loday~\cite{Loday} gave an explicit combinatorial formula for their vertex coordinates in terms of binary trees. Generalizing this construction, C.~Hohlweg and C.~Lange~\cite{HohlwegLange} found various realizations of associahedra that are parametrized by certain labelings of the convex polygon~$\ver{P}$. Their associahedra have various remarkable combinatorial properties. First, they have particularly simple vertex and facet descriptions: their integer vertex coordinates can be expressed by simple formulas on the corresponding triangulations of the polygon, and their normal vectors are given by the characteristic vectors of sets naturally associated to the internal diagonals of the polygon. Second, they are deeply connected to the symmetric group, the braid arrangement, and the permutahedron. To illustrate these connections, let us just observe here that
\begin{enumerate}[(i)]
\item these associahedra are obtained from the permutahedron by gliding some facets to infinity, 
\item the normal fans of these associahedra coarsen that of the permutahedron \mbox{(braid arrangement)},
\item the vertex barycenters of these associahedra all coincide with that of the permutahedron.
\end{enumerate}
More details on these connections are discussed below, see also~\cite{Hohlweg} for a recent survey.

In this paper, we focus on~\defn{spines} of triangulations, \ie on their oriented and labeled dual trees. These spines allow a new perspective on the realizations constructed by C.~Hohlweg and C.~Lange and refine the classical description of associahedra in terms of binary trees. We believe that this approach has several advantages. First, it extends the original presentation of J.-L.~Loday~\cite{Loday} and avoids the detour via triangulations needed in~\cite{HohlwegLange}. Second, spines naturally encode the normal fan of these associahedra and therefore shed light upon the above-mentioned connection between the permutahedron and the associahedra. Third, this approach leads to short and unified proofs of certain results on these associahedra: in particular, we obtain noteworthy proofs that these polytopes indeed realize the associahedron (Theorem~\ref{theo:associahedron}), of simple formulas for the dilation factors in certain Minkowski decompositions of these associahedra (Theorem~\ref{theo:MinkowskiDilationFactors}), and that their barycenter coincide with that of the permutahedron (Theorem~\ref{theo:barycenter}). Finally, our interpretation in terms of spines opens the door to further extensions of this construction to signed tree associahedra~\cite{Pilaud}. 

The paper is organized in six sections. In Section~\ref{sec:associahedron}, we describe the polytopes of~\cite{HohlwegLange} in terms of spines and provide a complete, independent and short proof that they indeed realize the associahedron. The next four sections explore further properties of these associahedra from the perspective of spines. Throughout these sections, we only include detailed proofs for which spines provide new insights, in order to keep the paper short and focussed on spines. In Section~\ref{sec:flips}, we study dissections, refinements and flips, and discuss some properties of the slope increasing flip lattice. Section~\ref{sec:normalFan} explores the connection between these associahedra and the permutahedron, in particular the relations between their normal fans. In Section~\ref{sec:Minkowski}, we decompose these associahedra as Minkowski sums and differences of dilated faces of the standard simplex. We discuss further topics in Section~\ref{sec:invariants}, where we present in particular a concise proof that the vertex barycenters of the associahedra and permutahedron coincide. Finally, in Section~\ref{sec:existingWork}, we relate spines to various topics from the literature, including detailed references to existing works, and we discuss further possible applications of the notion of spines.

\vspace{.3cm}
\textit{This paper is dedicated to two fiftieth birthdays. First, to that of G\"unter M. Ziegler to acknowledge his deep influence on the theory of polytopes. Second, to the discovery of the associahedron by J.~Stasheff~\cite{Stasheff} which still motivates nowadays an active research area.}


\section{Spines and associahedra}
\label{sec:associahedron}


\subsection{Triangulations and their spines}

Let~$\ver{P} \subset \R^2$ be a convex $(n+3)$-gon with no two vertices on the same vertical line. We denote by~$\ver{0}, \ver{1}, \dots, \ver{n+2}$ the vertices of~$\ver{P}$ ordered by increasing \mbox{$x$-coordi}\-nate. We call \defn{up vertices} and \defn{down vertices} the intermediate vertices~$\ver{1}, \dots, \ver{n+1}$ on the upper and lower hull of~$\ver{P}$ respectively. Similarly, we call \defn{up labels}~$U \subseteq [n+1]$ and \defn{down labels}~$D \subseteq [n+1]$ the label sets of all up and down vertices respectively. Note that the leftmost and rightmost vertices~$\ver{0}$ and~$\ver{n+2}$ are neither up nor down vertices and, consequently,~$0$ and $n+2$ are neither up nor down labels.

Let~$\delta$ be a diagonal of~$\ver{P}$ (internal or not). We denote by~$\Abo(\delta)$ the set of labels~${j \in [n+1]}$ of the vertices~$\ver{j}$ which lie above the line supporting~$\delta$ where we include the endpoints of~$\delta$ if they are down, and exclude them if they are up. Similarly, we denote by~$\Bel(\delta)$ the set of labels~${j \in [n+1]}$ of the vertices~$\ver{j}$ which lie below the line supporting~$\delta$ where we include the endpoints of~$\delta$ if they are up, and exclude them if they are down. Note that $\Abo(\delta)$ and~$\Bel(\delta)$ partition~$[n+1]$, and that we never include the labels~$0$ and~$n+2$ in~$\Abo(\delta)$ or~$\Bel(\delta)$. We set~$\abo(\delta) \eqdef |\Abo(\delta)|$ and $\bel(\delta) \eqdef |\Bel(\delta)|$. See Example~\ref{exm:small1} and~\fref{fig:flip}.

Let~$T$ be a triangulation of~$\ver{P}$. The \defn{spine} of~$T$ is its oriented and labeled dual tree~$\spine{T}$,~with 
\begin{itemize}
\item an internal node~$\nod{j}$ for each triangle~$\ver{i} \ver{j} \ver{k}$ of~$T$ where~${i<j<k}$,
\item an arc~$\delta^*$ for each internal diagonal~$\delta$ of~$T$, oriented from the triangle below~$\delta$ to the triangle above~$\delta$, and
\item an outgoing (resp.~incoming) blossom~$\delta^*$ for each top (resp.~bottom) boundary edge~$\delta$ of~$\ver{P}$.
\end{itemize}
The \defn{up nodes} and \defn{down nodes} are the internal nodes of~$\spine{T}$ labeled by up and down labels respectively. Observe that an up node has indegree one and outdegree two, while a down node has indegree two and outdegree one. See Example~\ref{exm:small1} and~\fref{fig:flip}.

\begin{example}
\label{exm:small1}
To illustrate these definitions, consider the decagon~$\ver{P}\ex$ with~$U\ex = \{2,3,5,7\}$ and $D\ex = \{1,4,6,8\}$ represented in \fref{fig:flip}. We have \eg~$\Abo(\ver{2}\ver{7}) = \{3,5\}$ and~$\Bel(\ver{2}\ver{8}) = \{1,2,4,6\}$. Two triangulations~$T\ex$ (left) and~$\widetilde T\ex$ (right) of~$\ver{P}\ex$ and their spines are represented. Throughout the paper, the vertices of the polygon are represented with dots~\scalebox{1.2}{$\bullet$}, while the up and down nodes of the spines are respectively represented with up and down triangles~$\blacktriangle$ and~$\blacktriangledown$.

\begin{figure}
  \centerline{\includegraphics[width=\textwidth]{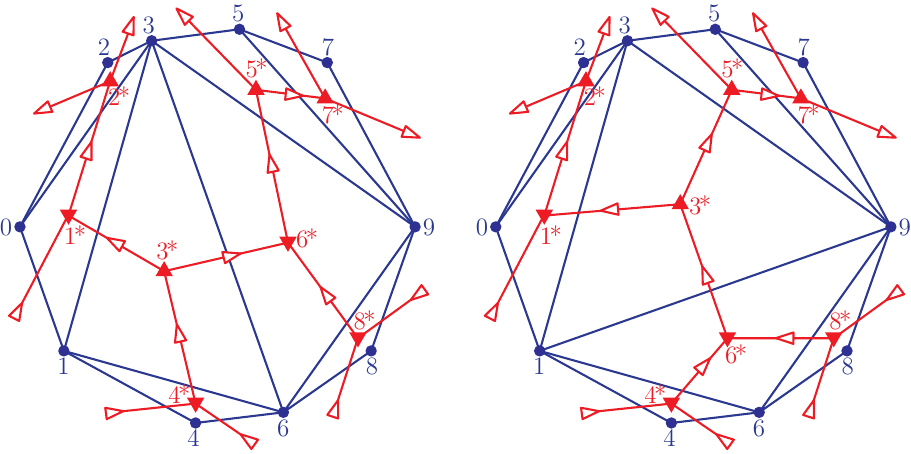}}
  \caption{Two triangulations~$T\ex$ and~$\widetilde T\ex$ of the polygon~$\ver{P}\ex$ and their spines.}
  \label{fig:flip}
\end{figure}
\end{example}

\begin{example}[Loday's associahedron]
\label{exm:Loday1}
Let~$\ver{P}\loday$ denote the $(n+3)$-gon for which~${U\loday = \varnothing}$ and ${D\loday = [n+1]}$. 
The spine~$\spine{T}$ of a triangulation~$T$ of~$\ver{P}\loday$ is a binary tree. It is rooted by its only outgoing blossom, and oriented towards the root. Its internal nodes are labeled by inorder search labeling~\cite[p.~287]{CormenLeisersonRivestStein}, see \fref{fig:reconstruct}\,(right). It can therefore be seen as a \defn{binary search tree} with label set~$[n+1]$. Note that it is usual to either delete the blossoms as in~\cite{BjornerWachs} or to complete them to leaves/root as in~\cite{Loday}.
\end{example}

The following two immediate observations will be used later. Their proofs are left to the reader.

\begin{lemma}
\label{lem:RHS}
If~$\delta$ is a diagonal of a triangulation~$T$ of~$\ver{P}$, then the set~$\Abo(\delta)$ (resp.~$\Bel(\delta)$) labels the nodes of the connected component in~$\spine{T} \ssm \delta^*$ of the target (resp.~source) of~$\delta^*$. We refer to this connected component as the subtree of~$\spine{T}$ above~$\delta$ (resp.~below~$\delta$).
\end{lemma}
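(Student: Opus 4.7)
My plan is to identify the nodes of the subtree above $\delta$ with the triangles of $T$ lying in the closed upper region $C$ of $\ver{P}$ cut off by $\delta$. Since internal diagonals of $T$ different from $\delta$ do not cross $\delta$, each lies entirely above or entirely below $\delta$; so the triangles of $T$ above $\delta$ triangulate $C$, whose vertices are the two endpoints of $\delta$ together with all vertices of $\ver{P}$ strictly above the line supporting $\delta$. The arcs of $\spine{T}$ dual to above-$\delta$ diagonals connect exactly these nodes into the subtree in question, so the task reduces to matching the multiset of middle labels of the triangles of $T$ contained in $C$ with $\Abo(\delta)$.

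I would first establish an auxiliary combinatorial fact: in any triangulation of a convex polygon with vertices $\ver{q_0},\dots,\ver{q_{r+1}}$ sorted by $x$-coordinate, each $x$-internal vertex $\ver{q_j}$ (with $1 \le j \le r$) is the middle vertex of exactly one triangle. This is elementary: by convexity, the edges incident to $\ver{q_j}$ appear in angular order in the same cyclic order as the other vertices appear around the boundary of $Q$; the labels of their targets, split into \emph{smaller $x$} and \emph{larger $x$}, form one block of each type, so there is exactly one transition and hence exactly one triangle at $\ver{q_j}$ with one vertex on each side. Applied to $C$, this shows that the labels appearing in the subtree above $\delta$ are exactly the $x$-internal vertices of $C$.

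It then remains to identify these $x$-internal labels with $\Abo(\delta)$. For a non-endpoint vertex $\ver{v}$ of $\delta$ with $v \in [n+1]$, being strictly above line $\delta$ matches being $x$-internal to $C$ with label in $[n+1]$, because $\ver{0}$ and $\ver{n+2}$, whenever they belong to $C$, are automatically its leftmost resp.\ rightmost vertex. For an endpoint $\ver{v} \in \{\ver{a},\ver{c}\}$ of $\delta$, I would prove the equivalence: \emph{$\ver{v}$ is $x$-internal to $C$ iff $\ver{v}$ is a down vertex.} Granted this equivalence, the identification $\{x\text{-internal labels of }C\} = \Abo(\delta)$ is immediate.

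The hardest step is this endpoint equivalence. It reduces to identifying which of the two arcs of $\partial \ver{P} \ssm \{\ver{a},\ver{c}\}$ lies strictly above line $\delta$, using the CCW cyclic order $\ver{0}, \ver{d_1}, \dots, \ver{d_q}, \ver{n+2}, \ver{u_p}, \dots, \ver{u_1}$ of $\partial \ver{P}$. A short case analysis on the up/down status of $\ver{a}$ and $\ver{c}$---invoking concavity of the upper hull and convexity of the lower hull, equivalently the CCW orientation of triples of boundary vertices---pins down whether the upper arc contains $\ver{0}$, $\ver{n+2}$, both, or neither, and in every case one checks directly that $\ver{v} \in \{\ver{a},\ver{c}\}$ is $x$-internal to $C$ precisely when it is down. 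The dual statement about $\Bel(\delta)$ and the source of $\delta^*$ follows by swapping the roles of \emph{above} and \emph{below} throughout.
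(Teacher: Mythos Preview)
Your proposal is correct. In the paper this lemma is introduced as one of ``two immediate observations'' whose ``proofs are left to the reader,'' so there is no argument to compare against in detail; what you have written is a fully fleshed-out justification of a fact the authors regard as evident from the definitions.

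Your route---identifying the subtree above~$\delta$ with the triangulation of the sub-polygon~$C$ cut off above~$\delta$, invoking the standard fact that each $x$-intermediate vertex of a convex polygon is the middle vertex of exactly one triangle, and then matching the $x$-intermediate vertices of~$C$ with~$\Abo(\delta)$---is exactly the ``obvious'' unpacking of the definitions. The only part requiring genuine care is the endpoint equivalence (an endpoint of~$\delta$ is $x$-internal to~$C$ iff it is a down vertex), and your case analysis via convexity of the lower hull and concavity of the upper hull is the right way to handle it. Concretely, if the left endpoint~$\ver{a}$ is an up vertex, the concavity of the upper hull forces every vertex of~$\ver{P}$ to the left of~$\ver{a}$ to lie below the line through~$\delta$, so~$\ver{a}$ is leftmost in~$C$; if~$\ver{a}$ is a down vertex, convexity of the lower hull forces~$\ver{0}$ to lie above that line, so~$\ver{0} \in C$ and~$\ver{a}$ is $x$-internal. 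This also yields the auxiliary fact (implicit in your argument) that the leftmost vertex of~$C$ is always either~$\ver{0}$ or~$\ver{a}$, which is what makes your treatment of the non-endpoint labels go through.

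In short: nothing is missing, and your level of detail simply exceeds what the paper deems necessary.
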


\begin{lemma}
\label{lem:crucial}
For any diagonal~$\delta$ of~$\ver{P}$ (internal or not), and any label~$j \in [n+1]$,
\begin{itemize}
\item if $j \in D \cap \Abo(\delta)$, then~$[j] \subseteq \Abo(\delta)$ or $[j,n+1] \subseteq \Abo(\delta)$,
\item if $j \in U \cap \Bel(\delta)$, then~$[j] \subseteq \Bel(\delta)$ or $[j,n+1] \subseteq \Bel(\delta)$.
\end{itemize}
\end{lemma}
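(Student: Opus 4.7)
The plan is to prove the first statement by contradiction, exploiting strict convexity of the lower hull of $\ver{P}$ at the down vertex $\ver{j}$; the second statement then follows symmetrically using the upper hull. Suppose for contradiction that $j \in D \cap \Abo(\delta)$ while neither $[j] \subseteq \Abo(\delta)$ nor $[j, n+1] \subseteq \Abo(\delta)$. Since $\Abo(\delta) \sqcup \Bel(\delta) = [n+1]$ and $j \in \Abo(\delta)$, these two failures produce indices $i, k \in [n+1]$ with $i < j < k$ and $i, k \in \Bel(\delta)$.

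Let $L$ denote the affine line supporting $\delta$ and $M$ the line through $\ver{i}$ and $\ver{k}$; write $y_L(x)$ and $y_M(x)$ for their respective ordinates at abscissa $x$. Unpacking the definitions of $\Abo$ and $\Bel$ (handling carefully the cases where one of $\ver{i}, \ver{j}, \ver{k}$ coincides with an endpoint of $\delta$), I first record the three inequalities $y_j \geq y_L(x_j)$, $y_i \leq y_L(x_i)$, and $y_k \leq y_L(x_k)$. Since $y_L - y_M$ is an affine function that is non-negative at both $x_i$ and $x_k$, it is non-negative throughout $[x_i, x_k]$, so $y_L(x_j) \geq y_M(x_j)$, whence $y_j \geq y_M(x_j)$. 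On the other hand, $\ver{j}$ is a down vertex of the convex polygon $\ver{P}$ and therefore lies on its strictly convex lower hull; since $\ver{i}$ and $\ver{k}$ are other polygon vertices satisfying $x_i < x_j < x_k$, the chord $\ver{i}\ver{k}$ passes strictly above $\ver{j}$ at abscissa $x_j$, giving $y_M(x_j) > y_j$. The chain $y_j \geq y_M(x_j) > y_j$ is absurd, closing the first item.

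The one delicate point is the verification of the three $y_L$-inequalities when one or more of $\ver{i}, \ver{j}, \ver{k}$ lies exactly on $L$, i.e., is an endpoint of $\delta$. This is precisely the reason for the conventions ``include the down endpoints of $\delta$ in $\Abo(\delta)$ and the up endpoints in $\Bel(\delta)$'': they are tailored so that the three non-strict inequalities survive these boundary configurations with the correct orientation. Once this bookkeeping is settled, the convexity argument above concludes in one line.
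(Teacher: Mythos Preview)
Your proof is correct. The paper does not give a proof of this lemma; it is introduced as one of ``two immediate observations'' whose ``proofs are left to the reader.'' Your contradiction argument via strict convexity of the lower hull at~$\ver{j}$, combined with the affine comparison $y_L(x_j) \ge y_M(x_j)$ forced by $y_i \le y_L(x_i)$ and $y_k \le y_L(x_k)$, is a clean and complete justification. Your discussion of the endpoint conventions is also accurate: the rule that down endpoints of~$\delta$ go into~$\Abo(\delta)$ and up endpoints into~$\Bel(\delta)$ is exactly what makes the three non-strict inequalities hold on the nose, while strictness of $y_M(x_j) > y_j$ comes from the fact that a vertex of a convex polygon is never a convex combination of two others.
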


Although we will not use it in this paper, let us observe that the spines of the triangulations of~$\ver{P}$ can be directly characterized as follows. This characterization relates our work to the recent preprint of K.~Igusa and J.~Ostroff~\cite{IgusaOstroff} who study these oriented and labeled trees in the perspective of cluster algebras and quiver representation theory.

\begin{proposition}
\label{prop:alternativeSpines}
A directed tree~$\spine{T}$ with blossoms and whose internal nodes are bijectively labeled by~$[n+1]$ is the spine of a triangulation of~$\ver{P}$ if and only if for all~$j \in [n+1]$:
\begin{itemize}
\item If~$j \in U$, then the node~$\nod{j}$ of~$\spine{T}$ labeled by~$j$ has indegree one and outdegree two, and all labels in the left outgoing subtree of~$\spine{T}$ at~$\nod{j}$ are smaller than~$j$, while all labels in the right outgoing subtree of~$\spine{T}$ at~$\nod{j}$ are larger than~$j$.
\item If~$j \in D$, then the node~$\nod{j}$ of~$\spine{T}$ labeled by~$j$ has indegree two and outdegree one, and all labels in the left incoming subtree of~$\spine{T}$ at~$\nod{j}$ are smaller than~$j$, while all labels in the right incoming subtree of~$\spine{T}$ at~$\nod{j}$ are larger than~$j$.
\end{itemize}
\end{proposition}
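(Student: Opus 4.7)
The plan is to prove the two implications separately: necessity follows from a direct geometric observation, while sufficiency is obtained by an inductive reconstruction of a triangulation from the given tree.

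For necessity, start with the spine~$\spine{T}$ of a triangulation~$T$. The indegree and outdegree conditions on up and down nodes are already noted in the description of the spine preceding the proposition. The nontrivial step is the label-separation condition at an up node~$\nod{j}$ corresponding to a triangle~$\ver{i}\ver{j}\ver{k}$ with~$i<j<k$. By construction the two outgoing arcs of~$\nod{j}$ are~$\ver{i}\ver{j}^*$ (left) and~$\ver{j}\ver{k}^*$ (right), and Lemma~\ref{lem:RHS} identifies the corresponding subtree label sets with~$\Abo(\ver{i}\ver{j})$ and~$\Abo(\ver{j}\ver{k})$. It then suffices to observe that~$\Abo(\ver{i}\ver{j}) \subseteq [1,j-1]$ whenever~$j \in U$ and~$i<j$: since~$\ver{j}$ lies on the (concave) upper hull of~$\ver{P}$, the line through~$\ver{i}$ and~$\ver{j}$ extended to the right of~$\ver{j}$ lies strictly above every vertex~$\ver{m}$ with~$m>j$, so no such label belongs to~$\Abo(\ver{i}\ver{j})$. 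The inclusion~$\Abo(\ver{j}\ver{k}) \subseteq [j+1,n+1]$ is symmetric, and the case~$j \in D$ is dual, using~$\Bel$ and the lower hull.

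For sufficiency, I argue by induction on~$n$. The base case~$n=0$ is immediate, as~$\ver{P}$ is a triangle and the unique spine satisfies the conditions vacuously. For the inductive step, I focus on the node~$\nod{n+1}$ and assume~$n+1 \in D$ (the case~$n+1 \in U$ is symmetric). The hypothesis forces the right incoming subtree at~$\nod{n+1}$ to carry only labels~$>n+1$, which is impossible, so this subtree consists of a single incoming blossom, to be identified with the bottom boundary edge~$\ver{n+1}\ver{n+2}$. Deleting~$\nod{n+1}$ together with this blossom and splicing its remaining left incoming and outgoing arcs into a single arc yields a labeled directed tree~$\spine{T}'$ on the label set~$[n]$. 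Granting the claim that~$\spine{T}'$ still satisfies the hypothesis of the proposition with respect to the reduced polygon~$\ver{P}'$ obtained from~$\ver{P}$ by suppressing~$\ver{n+1}$, the induction hypothesis produces a triangulation~$T'$ of~$\ver{P}'$ whose spine is~$\spine{T}'$. Appending to~$T'$ the triangle~$\ver{a}\ver{n+1}\ver{n+2}$, where~$\ver{a}$ is the left endpoint of the boundary edge of~$\ver{P}'$ corresponding to the spliced arc, produces a triangulation~$T$ of~$\ver{P}$ whose spine is~$\spine{T}$.

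The main obstacle is the verification that~$\spine{T}'$ satisfies the hypothesis on~$\ver{P}'$. Away from the spliced arc, the conditions at each node~$\nod{j}$ are inherited from~$\spine{T}$: the subtrees at~$\nod{j}$ in~$\spine{T}'$ differ from those in~$\spine{T}$ only by the removal of the label~$n+1$, which is compatible with any constraint of the form labels~$>j$ or~$<j$ for~$j \leq n$. At the endpoints of the spliced arc, indegree and outdegree are preserved (the arc to~$\nod{n+1}$ is simply replaced in the same direction by the spliced arc), and the partition of~$[n]$ into subtrees at each such endpoint is the one inherited from~$\spine{T}$ with~$n+1$ removed from the relevant part. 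A short case analysis---according to whether each endpoint is up or down, and whether either side of the splice is a blossom---completes the check, and the induction closes.
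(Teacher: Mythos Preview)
The paper does not actually prove this proposition: it is stated as an observation (``let us observe that the spines of the triangulations of~$\ver{P}$ can be directly characterized as follows'') with a reference to~\cite{IgusaOstroff}, and no argument is given. So there is nothing to compare against; I can only comment on the soundness of your proposal.

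Your necessity argument is fine: the convexity reasoning showing $\Abo(\ver{i}\ver{j}) \subseteq [1,j-1]$ when $j \in U$ and $i<j$ is correct, and combined with Lemma~\ref{lem:RHS} it yields the label-separation conditions.

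For sufficiency, the inductive scheme is reasonable, and the verification that the reduced tree $\spine{T}'$ still satisfies the hypotheses on~$\ver{P}'$ is correct. The gap is in the reconstruction step. You write that $\ver{a}$ is ``the left endpoint of the boundary edge of~$\ver{P}'$ corresponding to the spliced arc,'' but the spliced arc need not correspond to a boundary edge of~$\ver{P}'$: it can be an internal arc of~$\spine{T}'$ and hence an internal diagonal of~$T'$. For instance, take $n=3$, $D=[4]$, and the binary tree with root~$\nod{2}$, left child~$\nod{1}$, right child~$\nod{4}$, and $\nod{4}$'s left child~$\nod{3}$; after deleting~$\nod{4}$ the spliced arc is $\nod{3}\to\nod{2}$, which in $T'$ is the internal diagonal~$\ver{2}\ver{5}$. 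More seriously, even under the charitable reading that~$\ver{a}$ is the left endpoint of the \emph{diagonal}~$\delta'$ corresponding to the spliced arc, the description ``append the triangle $\ver{a}\ver{n+1}\ver{n+2}$'' presupposes that $\delta'$ has $\ver{n+2}$ as its right endpoint, and you have not argued this. It happens to be true, but it is not automatic from the inductive hypothesis and requires an argument relating the plane structure of the spliced arc (and the removed blossom) to the rightmost boundary edge of~$\ver{P}'$.

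Two clean ways to close the gap: either prove directly that the diagonal of~$T'$ dual to the spliced arc is of the form $\ver{a}\ver{n+2}$ (by tracking the cyclic position of the deleted blossom among the blossoms of~$\spine{T}$), or change the induction to peel off a node with \emph{two} blossoms (such a node exists in any spine). The latter node corresponds to an ear of the triangulation, and then ``remove/append a triangle'' is literally correct.
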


Note that we can reconstruct the labeling of a spine~$\spine{T}$ from its unlabeled plane structure together with the sets~$D$ and~$U$ as follows. We consider the permutation~$\sigma$ of~$[n+1]$ formed by all labels of~$D$ in increasing order followed by all labels of~$U$ in decreasing order. We traverse around the spine~$\spine{T}$ in counter-clockwise direction starting at the unique outgoing blossom that is followed by an incoming blossom. Whenever we encounter two incoming or two outgoing arcs at a node of~$\spine{T}$, we label this node by the next label of~$\sigma$.

\begin{exm}{1}{continued}
This procedure is illustrated in \fref{fig:reconstruct}\,(left) for the up and down label sets~$U\ex = \{2,3,5,7\}$ and $D\ex = \{1,4,6,8\}$. We traverse the tree counter-clockwise starting from the outgoing blossom marked by~$\#$. While traversing, we label each interior node where the direction of two consecutive edges changes by the next letter of~$\sigma\ex = [1,4,6,8,7,5,3,2]$. The resulting labeled oriented tree with blossoms is the spine~$\spine{(\widetilde T\ex)}$.
\end{exm}

\begin{exm}{2}{Loday's associahedron, continued}
For the polygon~$\ver{P}\loday$ where~$U\loday = \varnothing$ and $D\loday = [n+1]$, we have $\sigma\loday = [1, 2, \ldots, n+1]$, so that the procedure specializes to the inorder search labeling of binary trees~\cite[p.~287]{CormenLeisersonRivestStein}, which was used by J.-L.~Loday in~\cite{Loday}. This is illustrated in \fref{fig:reconstruct}\,(right).
\end{exm}

\begin{figure}[h]
  \centerline{\includegraphics[scale=1]{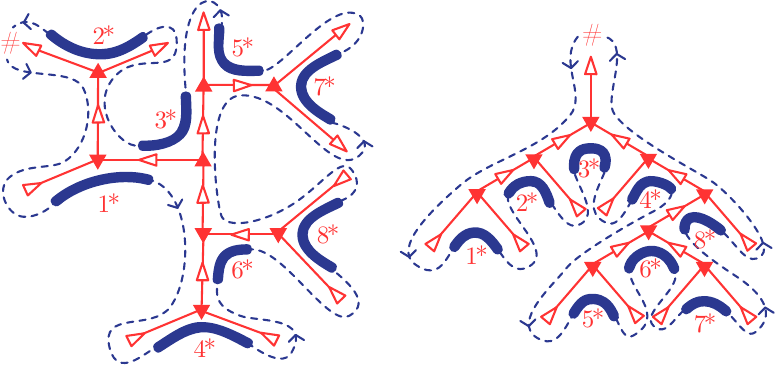}}
  \caption{Traverse in counter-clockwise direction around the unlabeled tree of spine~$\widetilde T\ex$ starting at blossom~$\#$ to reconstruct the inner node labels (left). The procedure specializes to inorder search labeling of binary trees (right).}
  \label{fig:reconstruct}
\end{figure}


\subsection{Associahedra}

The aim of this section is to state Theorem~\ref{theo:associahedron} which provides particular realizations~$\Asso$ of $n$-dimensional associahedra that are embedded in~$\R^{n+1}$ and parametrized by an $(n+3)$-gon~$\ver{P}$. More precisely, the realization~$\Asso$ does not depend on the geometry of~$\ver{P}$ but solely on the partition of~$[n+1]$ into up and down vertices of~$\ver{P}$. 

We first introduce points associated to triangulations of~$\ver{P}$ which are used in Theorem~\ref{theo:associahedron} to describe the vertices of~$\Asso$. Consider a triangulation~$T$ of~$\ver{P}$ with spine~$\spine{T}$. Let~$\Pi$ be the set of all \defn{undirected maximal paths} in~$\spine{T}$, that is, undirected paths connecting two blossoms of~$\spine{T}$. Note that a path~$\pi \in \Pi$ is not directed although each edge of~$\pi$ is oriented (as an arc of the spine~$\spine{T}$). For any~$j \in [n+1]$, we denote by~$\chor(j)$ the set of paths of~$\Pi$ whose edge orientation is reversed at node~$\nod{j}$. In other words, if~$j$ is a down (resp.~up) label, then~$\chor(j)$ is the set of paths of~$\Pi$ which use the two incoming (resp.~outgoing) arcs of~$\nod{j}$. It follows that~$|\chor(j)|$ is the product of the number of blossoms in the two incoming (resp.~outgoing) subtrees of~$\nod{j}$ in~$\spine{T}$. Associate to the triangulation~$T$ of~$\ver{P}$ the point~$\b{x}(T)\in \R^{n+1}$ with coordinates
\[
x_j(T) \eqdef \begin{cases} |\chor(j)| & \text{if } j \in D, \\ n+2-|\chor(j)| & \text{if } j \in U. \end{cases}
\]
Theorem~\ref{theo:associahedron} claims that the points~$\b{x}(T)$ associated to the triangulations~$T$ of~$\ver{P}$ are precisely the vertices of an associahedron~$\Asso$.

We now describe hyperplanes and half-spaces associated to internal diagonals of~$\ver{P}$ which are used in Theorem~\ref{theo:associahedron} to describe the inequalities of~$\Asso$. The interesting feature of the realization~$\Asso$ is that its facet defining inequalities are also facet defining inequalities of the permutahedron. Remember that the $n$-dimensional permutahedron~$\Perm$ is defined either as the convex hull of all permutations of~$[n+1]$ (seen as vectors in~$\R^{n+1}$), or as the intersection of the hyperplane~$H \eqdef H^=([n+1])$ with the half-spaces~$H^\ge(J)$ for~$J \subseteq [n+1]$, where
\[
H^=(J) \eqdef \biggset{\b{x} \in \R^{n+1}}{\sum_{j \in J} x_j = \binom{|J|+1}{2}}
\quad\text{and}\quad
H^\ge(J) \eqdef \biggset{\b{x} \in \R^{n+1}}{\sum_{j \in J} x_j \ge \binom{|J|+1}{2}}.
\]
To each internal diagonal or upper boundary edge~$\delta$ of the polygon~$\ver{P}$, we associate the hyperplane $H^=(\delta) \eqdef H^=(\Bel(\delta))$ and the half-space $H^\ge(\delta) \eqdef H^\ge(\Bel(\delta))$. Theorem~\ref{theo:associahedron} claims that the inequalities~$H^\ge(\delta)$ associated to internal diagonals~$\delta$ of~$\ver{P}$ are precisely the facet-defining inequalities for the associahedron~$\Asso$.

\begin{exm}{1}{continued}
\label{exm:small2}
The points associated to the triangulations~$T\ex$ and~$\widetilde T\ex$ of \fref{fig:flip} are~$\b{x}(T\ex) = (7,8,-6,1,7,10,8,1)$ and~$\b{x}(\widetilde T\ex) = (7,8,0,1,7,4,8,1)$. The half-spaces~$H^\ge(\ver{2}\ver{8})$ and~$H^\ge(\ver{3}\ver{9})$ are defined by the inequalities~$x_1 + x_2 + x_4 + x_6 \ge 10$ and~$x_1 + x_2 + x_3 + x_4 + x_6 + x_8 \ge 21$, respectively. Note that~$\b{x}(T\ex)$ and~$\b{x}(\widetilde T\ex)$ both belong to~$H$, $H^=(\ver{3}\ver{9})$ and~$H^\ge(\ver{2}\ver{8})$.
\end{exm}

\begin{exm}{2}{Loday's associahedron, continued}
\label{exm:Loday2}
For any triangulation~$T\loday$ of~$\ver{P}\loday$, the $i$\ordinal{} coordinate of~$\b{x}(T\loday)$ is the product of the number of leaves in the two children of the $i$\ordinal{} node of the dual binary tree of~$T\loday$ for the inorder search labeling (see \fref{fig:reconstruct}\,(right)). For~$i<j$, the half-space~$H^\ge(\ver{i}\ver{j})$ corresponding to a diagonal~$\ver{i}\ver{j}$ is given by the inequality~$\sum_{k=i+1}^{j-1} x_k \ge \binom{j-i}{2}$. See~\cite{Loday}.
\end{exm}

The following statements are the essence of the construction of~\cite{HohlwegLange}. Their proofs in terms of spines are deferred to Section~\ref{subsec:proofs_for_realization}.

\begin{proposition}[\protect{\cite[Prop.~2.10]{HohlwegLange}}]
\label{prop:main}
Let~$\delta$ be an internal diagonal or an upper boundary edge of~$\ver{P}$, and let~$T$ be a triangulation of~$\ver{P}$.
\begin{enumerate}[(i)]
\item The point~$\b{x}(T)$ is contained in the half-space~$H^\ge(\delta)$. 
\item The point~$\b{x}(T)$ lies on the hyperplane~$H^=(\delta)$ if and only if~$\delta$ belongs to~$T$.
\end{enumerate}
\end{proposition}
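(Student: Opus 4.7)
The plan is to express the sum $\sum_{j \in \Bel(\delta)} x_j(T)$ through a path-counting identity in the spine $\spine{T}$, relying on the fact that $|\chor(j)|$ counts the undirected maximal paths of $\Pi$ that reverse direction at $\nod{j}$.

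Write $B \eqdef \Bel(\delta)$, $b \eqdef \bel(\delta)$, $A \eqdef \Abo(\delta)$, $a \eqdef \abo(\delta)$ (so $a+b = n+1$), and $u_B \eqdef |B \cap U|$. Expanding the two-case definition of $x_j(T)$ rewrites the sum in the form
\[
\sum_{j \in B} x_j(T) \;=\; u_B\,(n+2) \;+\; \sum_{\pi \in \Pi} \bigl( r_{D \cap B}(\pi) - r_{U \cap B}(\pi) \bigr),
\]
where $r_S(\pi)$ counts the reversals of $\pi$ at nodes with labels in $S$. The structural observation I would use is that, along any undirected path $\pi$, a reversal at a down node flips the traversal from ``with the arrow'' to ``against the arrow'' and a reversal at an up node flips it back. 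Hence reversals must alternate in type, and the total $r_D(\pi) - r_U(\pi)$ depends only on the orientations of the two endpoint blossoms of $\pi$: it equals $-1$ if both are outgoing, $+1$ if both are incoming, and $0$ otherwise.

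When $\delta \in T$, Lemma~\ref{lem:RHS} identifies $B$ with the labels of the subtree $T_B$ of $\spine{T} \ssm \delta^*$ containing the source of $\delta^*$ and $A$ with those of the opposite component $T_A$. I classify each $\pi$ as (a) contained in $T_B$, (b) contained in $T_A$, or (c) crossing $\delta^*$. Applying the observation above, paths in (a) contribute $\binom{d_B^\star}{2} - \binom{u_B^\star}{2}$, where $u_B^\star, d_B^\star$ count the outgoing and incoming $\spine{T}$-blossoms inside $T_B$; paths in (b) contribute $0$; and paths in (c) split at the exit arc $\delta^*$, and viewing $\delta^*$ as an outgoing blossom from $T_B$ shows that each crossing $\pi$ contributes $-1$ if its $T_B$-endpoint blossom is outgoing and $0$ otherwise, totalling $-(a+1)u_B^\star$. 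A short degree-sum count inside $T_B$ (using that up nodes have out-degree $2$ and down nodes have out-degree $1$) then yields $u_B^\star = u_B$ and $d_B^\star = b+1 - u_B$, after which elementary algebra collapses the total to $\binom{b+1}{2}$. This settles the ``if'' direction of (ii) and the equality case of (i).

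For the strict inequality when $\delta \notin T$, the set $B$ is no longer the node set of a subtree of $\spine{T}$, so the telescoping above fails. Here I would invoke Lemma~\ref{lem:crucial}, which restricts the ``exceptional'' labels $U \cap B$ and $D \cap A$ to be prefixes or suffixes of $[n+1]$, combined with a flip-monotonicity argument: starting from $T$, repeatedly flip a diagonal of $T$ that crosses $\delta$ so as to reduce the total number of crossings with $\delta$; each such flip should weakly decrease $\sum_{j \in B} x_j(T)$, with at least one strict decrease before reaching a triangulation containing $\delta$, where the sum equals $\binom{b+1}{2}$ by the previous step. The main obstacle is establishing this monotonicity under a single flip, as it requires comparing the contributions of the two triangles meeting the flipped diagonal before and after the flip, and using the prefix/suffix description from Lemma~\ref{lem:crucial} to control down and up nodes uniformly.
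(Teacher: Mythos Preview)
Your treatment of the case $\delta \in T$ is correct and is essentially the paper's argument, organized slightly differently: both compute $\sum_{j \in B} x_j(T)$ via the same double-counting identity and the alternation of up/down reversals along each path, arriving at the identical sum $u_B(n+2) + \binom{b+1-u_B}{2} - \binom{u_B}{2} - (n+2-b)\,u_B = \binom{b+1}{2}$.

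The case $\delta \notin T$, however, has a genuine gap. You propose flipping towards a triangulation containing $\delta$ while tracking $\sum_{j \in B} x_j(T)$, but you explicitly leave the monotonicity step open, and it is not a minor technicality. By Lemma~\ref{lem:flip}, a single flip changes $\b{x}(T)$ by a positive multiple of $e_i - e_j$, so the effect on $\sum_{j \in B} x_j$ depends only on which of $i,j$ lie in $B = \Bel(\delta)$; there is no a~priori reason a crossing-reducing flip moves the sum in the right direction. Making this work would require exhibiting, at every stage, a \emph{specific} crossing-reducing flip whose intermediate labels $i,j$ satisfy the appropriate membership pattern in $B$, and Lemma~\ref{lem:crucial} by itself does not hand you this. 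As written, neither (i) for $\delta \notin T$ nor the ``only if'' of (ii) is established.

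The paper avoids this difficulty by arguing directly at the path level, uniformly for all $T$. It assigns to each $\pi \in \Pi$ a minimal contribution $\mu(\pi) \in \{-1,0,1\}$ depending only on whether the two blossoms of $\pi$ lie below or above $\delta$ and are incoming or outgoing, checks that $\sum_\pi \mu(\pi)$ already equals $\binom{b+1}{2} - u_B(n+2)$, and then uses Lemma~\ref{lem:crucial} to show that \emph{every} path contributes at least $\mu(\pi)$ to $\sum_{j\in B} x_j(T)$. When $\delta \notin T$, it picks any $\tilde\delta \in T$ crossing $\delta$ and exhibits a path joining a blossom above $\delta$ but below $\tilde\delta$ to a blossom below $\delta$ but above $\tilde\delta$ whose contribution strictly exceeds $\mu(\pi)$. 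This yields (i) and the strict inequality in (ii) simultaneously, with no induction on flips.
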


\begin{theorem}[\protect{\cite[Thms.~1.1 and~2.12]{HohlwegLange}}]
\label{theo:associahedron}
The following equivalent descriptions define an associahedron~$\Asso$:
\begin{enumerate}[(i)]
\item the convex hull of the points~$\b{x}(T)$ for all triangulations~$T$ of~$\ver{P}$,
\item the intersection of the hyperplane~$H$ with the half-spaces~$H^\ge(\delta)$ for all diagonals~$\delta$ of~$\ver{P}$.
\end{enumerate}
\end{theorem}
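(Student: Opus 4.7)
The plan is to show that the convex hull $Q$ of the points $\b{x}(T)$ and the polyhedron $P$ defined by the inequalities of (ii) coincide, and that this common polytope realizes the associahedron. The containment $Q \subseteq P$ would follow immediately from Proposition~\ref{prop:main}: part~(i) places each $\b{x}(T)$ in every half-space $H^\ge(\delta)$, while applying part~(ii) to any upper boundary edge $\delta$ --- for which $\Bel(\delta) = [n+1]$ by convexity of the upper hull of~$\ver{P}$ --- would yield $\b{x}(T) \in H^=(\delta) = H$.

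The central step is to certify that each $\b{x}(T)$ is a vertex of $P$, with exactly the $n$ tight facet hyperplanes $H^=(\delta)$ for $\delta$ internal to $T$. By Proposition~\ref{prop:main}(ii), $\b{x}(T)$ already saturates exactly these $n$ inequalities in addition to $H$, so the task reduces to establishing linear independence of the normals $\one_{\Bel(\delta)}$ (for $\delta$ internal to $T$) together with $\one_{[n+1]}$. This is the main obstacle, and the spine perspective is what makes it tractable: the internal diagonals of $T$ are in bijection with the internal arcs of~$\spine{T}$, and the underlying tree on internal nodes admits a leaf. A leaf node $\nod{j}$ is incident to a single internal arc $\delta^*$, so by Lemma~\ref{lem:RHS} one of $\Bel(\delta), \Abo(\delta)$ equals $\{j\}$; the corresponding normal is thus $\one_{\{j\}}$ up to complementation modulo $\one_{[n+1]}$, and can be used to eliminate coordinate~$j$. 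After elimination, the remaining normals are precisely those attached to the spine of the triangulation of the smaller $(n+2)$-gon obtained by contracting the leaf triangle of $T$ (whose $\Bel$-sets differ from the originals only by possibly dropping $j$), and linear independence then follows by induction on $n$.

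Once every $\b{x}(T)$ is identified as a vertex of $P$, the edge structure comes from flips. If $T, T'$ are flip-adjacent, exchanging diagonal $\delta$ for $\delta'$, they share $n-1$ internal diagonals, so $\b{x}(T)$ and $\b{x}(T')$ share $n-1$ tight facet hyperplanes, hence lie on a common edge of $P$ by the vertex property just established. Since the flip graph on triangulations of an $(n+3)$-gon is $n$-regular and connected, $P$ is a simple $n$-polytope whose $1$-skeleton is the flip graph and whose facets are indexed by the internal diagonals of $\ver{P}$. A facet-wise induction then yields $P = Q$ and the identification of the full face lattice of $P$ with the reverse inclusion lattice of dissections of~$\ver{P}$: each facet of $P$ is, by Proposition~\ref{prop:main}(ii), the convex hull of the $\b{x}(T)$ for $T$ containing a fixed internal diagonal~$\delta$, and this is an associahedron of one smaller dimension by the inductive hypothesis. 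The simplification afforded by spines is concentrated in the linear independence argument above, which previously required more intricate case analysis and here reduces to a transparent leaf-pruning induction driven by Lemmas~\ref{lem:RHS} and~\ref{lem:crucial}.
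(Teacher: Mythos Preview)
Your leaf-pruning argument for the linear independence of the normals $\one_{\Bel(\delta)}$ (for $\delta$ internal to $T$) together with $\one_{[n+1]}$ is correct and is a genuinely different route from the paper's. The paper works instead with $Q \eqdef \conv\{\b{x}(T)\}$ and shows directly that each $H^=(\delta)$ supports a \emph{facet} of $Q$: fixing a triangulation $T\ni\delta$ and flipping each of the other $n-1$ internal diagonals $\delta_1,\dots,\delta_{n-1}$ of $T$ yields points $\b{x}(T),\b{x}(T_1),\dots,\b{x}(T_{n-1})$ on $H^=(\delta)$, and these are affinely independent because $\b{x}(T_i)\in H^=(\delta_j)$ precisely when $j\ne i$, by the full biconditional of Proposition~\ref{prop:main}(ii). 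This gets independence for free from what has already been proved, with no separate tree induction, and simultaneously certifies that every $\b{x}(T)$ lies on $n$ facets of $Q$ and is therefore a vertex. (Incidentally, your leaf-pruning uses only Lemma~\ref{lem:RHS}; Lemma~\ref{lem:crucial} plays no role there.)

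There is, however, a real gap in your step from ``each $\b{x}(T)$ is a simple vertex of $P$ with edges going to its flip-neighbours'' to ``$P$ is a simple $n$-polytope whose $1$-skeleton is the flip graph'': you have established neither that $P$ is bounded nor that the $\b{x}(T)$ are \emph{all} the vertices of $P$. This can be patched --- all $n$ edges at each $\b{x}(T)$ are bounded and land at some $\b{x}(T')$, so connectedness of the bounded $1$-skeleton of a pointed polyhedron forces both conclusions --- but the argument must be supplied. The paper sidesteps the issue by staying with $Q$ throughout: once the vertices and facets of $Q$ are identified, the vertex--facet incidences inject the associahedral face poset into that of $Q$, and since both are face posets of $n$-polytopes they must coincide; the equality $Q=P$ is then immediate from the facet description of $Q$. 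A further imprecision: in your facet-wise induction, the facet of the associahedron at an internal diagonal $\delta$ is a \emph{product} of two lower-dimensional associahedra (one for each sub-polygon cut off by $\delta$), not a single associahedron of dimension~$n-1$.
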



\subsection{Proof of Proposition~\ref{prop:main} and Theorem~\ref{theo:associahedron}}
\label{subsec:proofs_for_realization}
The proof of Proposition~\ref{prop:main} relies on a double counting argument. For a path~$\pi \in \Pi$, we denote by~$\chor(\pi)$ the set of labels~$j$ in~$[n+1]$ such that the edge orientation of~$\pi$ is reversed at node~$\nod{j}$. In other words, $j \in \chor(\pi)$ if and only if~$\pi \in \chor(j)$. We partition~$\chor(\pi)$ into the up labels~$\Up(\pi) \eqdef \chor(\pi) \cap U$ and the down labels~$\Down(\pi) \eqdef \chor(\pi) \cap D$. By definition, $\pi$ contains two outgoing arcs at the nodes labeled by~$\Up(\pi)$, two incoming arcs at the nodes labeled by~$\Down(\pi)$, and one incoming and one outgoing arc at all its other nodes. Therefore, the nodes labeled by~$\Up(\pi)$ and~$\Down(\pi)$ alternate along~$\pi$. Moreover, if we traverse along the path~$\pi$ from one endpoint~$\nod{s}$ to the other~$\nod{t}$, the first (resp.~last) node where the orientation reverses is labeled by~$\Up(\pi)$ if the blossom~$\nod{s}$ (resp.~$\nod{t}$) is outgoing and by~$\Down(\pi)$ if the blossom~$\nod{s}$ (resp.~$\nod{t}$) is incoming.

Using that~$\pi \in \chor(j)$ is equivalent to~$j \in \chor(\pi) = \Up(\pi) \sqcup \Down(\pi)$, we obtain by double counting
\begin{align*}
\sum_{j \in \Bel(\delta)} x_j(T) & = \!\! \sum_{d \in \Bel(\delta) \cap D} \!\! \big| \chor(d) \big| \, + \!\!\! \sum_{u \in \Bel(\delta) \cap U} \!\! \big( n+2-\big| \chor(u) \big| \big) \\
& = \up(\delta) (n+2) + \sum_{\pi \in \Pi} \big( \big| \Bel(\delta) \cap \Down(\pi) \big| - \big| \Bel(\delta) \cap \Up(\pi) \big| \big),
\end{align*}
where~$\up(\delta) \eqdef | \Bel(\delta) \cap U |$. The main idea of our approach is to show that each path~$\pi \in \Pi$ contributes to this sum at least as much as its \defn{minimal contribution}~$\mu(\pi)$ defined as
\begin{itemize}
\item $\mu(\pi) = 1$ if~$\pi$ connects two incoming blossoms below~$\delta$,
\item $\mu(\pi) = -1$ if~$\pi$ connects two outgoing blossoms below~$\delta$, or an outgoing blossom below~$\delta$ with any blossom above~$\delta$,
\item $\mu(\pi) = 0$ otherwise.
\end{itemize}
Summing the minimal contributions of all paths of~$\Pi$, we obtain that
\[
\sum_{j \in \Bel(\delta)} x_j(T) \ge \up(\delta) (n+2) + \binom{\bel(\delta)-\up(\delta)+1}{2} - \binom{\up(\delta)}{2} - \up(\delta) \big( n+2-\bel(\delta) \big) = \binom{\bel(\delta)+1}{2},
\]
since the spine~$\spine{T}$ has precisely~$\up(\delta)$ outgoing blossoms below~$\delta$, $\bel(\delta)-\up(\delta)+1$ incoming blossoms below~$\delta$, and~$n+2-\bel(\delta)$ blossoms above~$\delta$. It follows that~$\b{x}(T)$ is always contained in~$H^\ge(\delta)$. Moreover, $\b{x}(T)$ lies on~$H^=(\delta)$ if and only if the contri\-bution of each path~$\pi \in \Pi$ to the sum~$\sum_{j \in \Bel(\delta)} x_j(T)$ precisely coincides with the minimal contribution~$\mu(\pi)$ of~$\pi$.

The remaining of the proof discusses the contribution of each path to the sum~$\sum_{j \in \Bel(\delta)} x_j(T)$, depending on whether~$\delta$ belongs to~$T$ or not. Consider first the situation when~$\delta$ is a diagonal of~$T$. By Lemma~\ref{lem:RHS}, $\Bel(\delta)$ labels the subtree of the spine~$\spine{T}$ below~$\delta$. Thus, a maximal path~$\pi \in \Pi$ connecting two blossoms below~$\delta$ (resp.~above~$\delta$) completely remains below~$\delta$ (resp.~above~$\delta$), while a maximal path~$\pi \in \Pi$ connecting a blossom below~$\delta$ to a blossom above~$\delta$ stays below~$\delta$ until it leaves definitively~$\Bel(\delta)$ through the outgoing arc~$\delta^*$. Since the nodes labeled by~$\Down(\pi)$ and~$\Up(\pi)$ form an alternating sequence along~$\pi$ which starts and finishes with~$\Up(\pi)$ or~$\Down(\pi)$ depending on whether the endpoints of~$\pi$ are incoming or outgoing blossoms, the contribution ${\big| \Bel(\delta) \cap \Down(\pi) \big| - \big| \Bel(\delta) \cap \Up(\pi) \big|}$  of~$\pi$ to the sum~$\sum_{j \in \Bel(\delta)} x_j(T)$ is precisely its minimal contribution~$\mu(\pi)$. Therefore, if~$\delta$ is a diagonal of~$T$ then $\b{x}(T)$ belongs to the hyperplane~$H^=(\delta)$, and thus to the half-space~$H^\ge(\delta)$.

Consider now the situation when~$\delta$ is not a diagonal of~$T$. Fix a path~$\pi \in \Pi$ and the sequence~$S$ of the labels in~$\chor(\pi) \cap \Bel(\delta)$ along~$\pi$, which contribute to the sum~$\sum_{j \in \Bel(\delta)} x_j(T)$.
Note~that:
\begin{enumerate}[(i)]
\item $\pi$ always contributes at least~$-1$. Indeed, the sequence~$S$ cannot have two consecutive up labels, since the labels in~$\Down(\pi)$ and~$\Up(\pi)$ are alternating along~$\pi$, and since a down label between two labels below~$\delta$ is also below~$\delta$ by Lemma~\ref{lem:crucial}.
\item $\pi$ contributes at least~$1$ if its two endpoints are incoming blossoms below~$\delta$, and at least~$0$ if one of them~is. Indeed, if an endpoint of~$\pi$ is an incoming blossom below~$\delta$, and if the first orientation switch~$d \in \Down(\pi)$ along~$\pi$ is above~$\delta$, then the rest of the path~$\pi$ also lies above~$\delta$ by Lemma~\ref{lem:crucial}.
\item $\pi$ contributes at least~$0$ if its two endpoints are above~$\delta$. Indeed, if some~$u \in \Up(\pi)$ lies below~$\delta$, then one of the endpoints of~$\pi$ has to be below~$\delta$ by Lemma~\ref{lem:crucial}.
\end{enumerate}
Hence, the contribution of each path~$\pi \in \Pi$ to the sum~$\sum_{j \in \Bel(\delta)} x_j(T)$ is at least its minimal contribution~$\mu(\pi)$. As mentioned earlier, it follows that~$\b{x}(T)$ belongs to the half-space~$H^\ge(\delta)$ and it remains to prove that~$\b{x}(T)$ does not belong to~$H^=(\delta)$ if $\delta$ is not a diagonal of~$T$. It now suffices to find one path~$\pi \in \Pi$ whose contribution strictly exceeds its minimal contribution~$\mu(\pi)$. For this, consider any diagonal~$\gamma$ in~$T$ which crosses the diagonal~$\delta$. Such a diagonal exists since we are in the case where~$\delta$ is not a diagonal of~$T$. We claim that the contribution of any path~$\pi$ from a blossom~$\nod{s}$ above~$\delta$ but below~$\gamma$ to a blossom~$\nod{t}$ below~$\delta$ but above~$\gamma$ strictly exceeds its minimal contribution~$\mu(\pi)$. Since $\nod{t}$ is below~$\delta$ and $\nod{s}$ is above~$\delta$, we know that~$\mu(\pi)$ only
depends on~$\nod{t}$: $\mu(\pi)=0$ if $\nod{t}$ is incoming and~$\mu(\pi)=-1$ if $\nod{t}$ is outgoing. Using the observations (i) to (iii) above, it suffices to show that:
\begin{enumerate}[(i)]
\item If there is at least one label~$p \in \Bel(\delta)$ such that the edge orientation of~$\pi$ is reversed at node~$\nod{p}$, then the first such label while traversing from~$\nod{s}$ to~$\nod{t}$ along~$\pi$ must be a down~label. Then the contribution of~$\pi$ exceeds~$\mu(\pi)$ at least by this down label. 
\item Otherwise, the blossom~$\nod{t}$ must be outgoing, and~$\pi$ contributes~$0$ instead of~$\mu(\pi) = -1$.
\end{enumerate}
To prove~(i), suppose that there is a label~$p \in U \cap \Bel(\delta)$ such that~$\pi$ has two outgoing arcs at node~$\nod{p}$. 
Observe that~$p \in \Abo(\gamma)$ since~$\delta$ and~$\gamma$ cross and $p \in U$.
This implies that the arc~$\gamma^*$ must appear between~$\nod{s}$ and~$\nod{p}$ along~$\pi$, and is thus directed towards the node~$\nod{p}$. Therefore, the edge orientation of~$\pi$ must reverse between~$\gamma^*$ and~$\nod{p}$. This happens at a node~$\nod{q}$ for some~$q \in \Down(\pi)$. The label~$q$ must be below~$\delta$, otherwise Lemma~\ref{lem:crucial} would imply that either~$\gamma$ or~$p$ is above~$\delta$. In particular, $p$ is not the first label below~$\delta$ where the edge orientation of~$\pi$ is reversed and, as a consequence, the contribution of~$\pi$ to the sum exceeds~$\mu(\pi)$ by at least one.
To prove~(ii), assume that~$\nod{t}$ is incoming. Since the arc~$\gamma^*$ is directed towards~$\nod{t}$, the edge orientation of~$\pi$ must reverse at a node~$\nod{p}$ for some~$p \in \Down(\pi)$. Since~$\delta$ crosses~$\gamma$ and $\nod{t}$ is below~$\delta$, vertex~$\ver{p}$ is below~$\delta$ which contradicts $\Bel(\delta)\cap\chor(\pi)=\varnothing$. 
This concludes the proof of Proposition~\ref{prop:main}.

\bigskip
We now prove Theorem~\ref{theo:associahedron}. Consider the convex hull~$\conv \{\b{x}(T)\}$ of the points~$\b{x}(T)$, for all triangulations~$T$ of~$\ver{P}$. For any internal diagonal~$\delta$ of~$\ver{P}$, the hyperplane~$H^=(\delta)$ supports \emph{a priori} a face of~$\conv \{\b{x}(T)\}$. For a triangulation~$T$ of~$\ver{P}$ containing the diagonal~$\delta$, consider the triangulations~$T_1, \dots, T_{n-1}$ of~$\ver{P}$ obtained by flipping each internal diagonal~$\delta_1, \dots, \delta_{n-1}$ of~$T$ distinct from~$\delta$. The point~$\b{x}(T_i)$ is contained in all~$H^=(\delta_j)$ for~$j \ne i$ but not in~$H^=(\delta_i)$. Therefore, the points~$\b{x}(T), \b{x}(T_1), \dots, \b{x}(T_{n-1})$ are affinely independent, and they are all contained in the hyperplane~$H^=(\delta)$. The face supported by~$H^=(\delta)$ is thus a facet of~$\conv \{\b{x}(T)\}$. As each point~$\b{x}(T)$ is the intersection of at least~$n$ facets, it follows that~$\b{x}(T)$ is in fact a vertex of~$\conv \{\b{x}(T)\}$. The reverse inclusion poset of crossing-free sets of internal diagonals of~$\ver{P}$ thus injects in the inclusion poset of non-empty faces of~$\conv \{\b{x}(T)\}$, since the vertex-facets incidences are respected. Since both are face posets of $n$-dimensional polytopes, they must be isomorphic. As a consequence, $\conv \{\b{x}(T)\}$ is indeed an associahedron and its facet supporting hyperplanes are precisely the hyperplanes~$H^=(\delta)$, for all internal diagonals~$\delta$ of~$\ver{P}$. As stated in Theorem~\ref{theo:associahedron}, we therefore obtain an associahedron described by
\[
\Asso \; \eqdef \; \conv \set{\b{x}(T)}{T \text{ triangulation of } \ver{P}} \; = \; H \cap \!\!\!\!\!\! \bigcap_{\substack{\delta \text{ internal} \\ \text{diagonal of } \ver{P}}} \!\!\!\!\!\! H^\ge(\delta).
\]

\begin{remark}
Alternatively, we could prove Theorem~\ref{theo:associahedron} using Theorem~4.1 in~\cite{HohlwegLangeThomas} as follows. First, we construct the normal fan of the associahedron~$\Asso$ using Section~\ref{subsec:maximalNormalCones}. The conditions to apply Theorem~4.1 in~\cite{HohlwegLangeThomas} are then guaranteed by Lemma~\ref{lem:flip} and Proposition~\ref{prop:main}\,(ii) (the easier part of this proposition). In order to keep the presentation self-contained, we decided to provide direct proofs of Proposition~\ref{prop:main} and Theorem~\ref{theo:associahedron}.
\end{remark}


\section{Spines and flips}
\label{sec:flips}

In this section, we describe the behavior of spines when we perform flips in triangulations of~$\ver{P}$. For this, we first define spines of dissections of~$\ver{P}$ and their behavior under refinement. This leads to the crucial Lemma~\ref{lem:flip} which is implicitly used for various further results. The section ends with a discussion of relevant properties of the slope increasing flip lattice which is induced by a natural direction of flips. We emphasize that most statements are either immediate results or direct reformulations of results in~\cite{HohlwegLange} using the language of spines, and omit their proofs for this reason. Nevertheless, we reprove Lemma~\ref{lem:flip} to illustrate how spines simplify the arguments. 


\subsection{Dissections and their spines}

We first extend the definition of spines to \defn{dissections} of~$\ver{P}$, \ie crossing-free sets of internal diagonals of~$\ver{P}$ which decompose~$\ver{P}$ into polygonal~cells. Throughout this section, we call \defn{intermediate vertices} of a cell all its vertices except its leftmost and rightmost ones. The \defn{spine} of a dissection~$W$ of~$\ver{P}$ is its oriented and labeled dual tree~$\spine{W}$~with
\begin{itemize}
\item an internal node~$\nod{C}$ for each cell~$\ver{C}$ of~$W$, labeled by all the intermediate vertices of~$\ver{C}$,
\item an arc~$\delta^*$ for each internal diagonal~$\delta$ of~$W$ oriented from the cell below~$\delta$ to the cell above~$\delta$,
\item an outgoing (resp.~incoming) blossom~$\delta^*$ for each top (resp.~bottom) boundary edge~$\delta$ of~$\ver{P}$.
\end{itemize}
Observe that the outdegree (resp.~indegree) of a node~$\nod{C}$ is precisely the number of edges of the upper (resp.~lower) convex hull of~$\ver{C}$.

\begin{exm}{1}{continued}
\label{exm:small3}
\fref{fig:dissections} represents two dissections~$W\ex$ (left) and~$\widetilde W\ex$ (right). The nodes of the spine corresponding to cells with more than $3$ vertices are represented with squares~$\blacksquare$.

\begin{figure}[h]
  \centerline{\includegraphics[width=\textwidth]{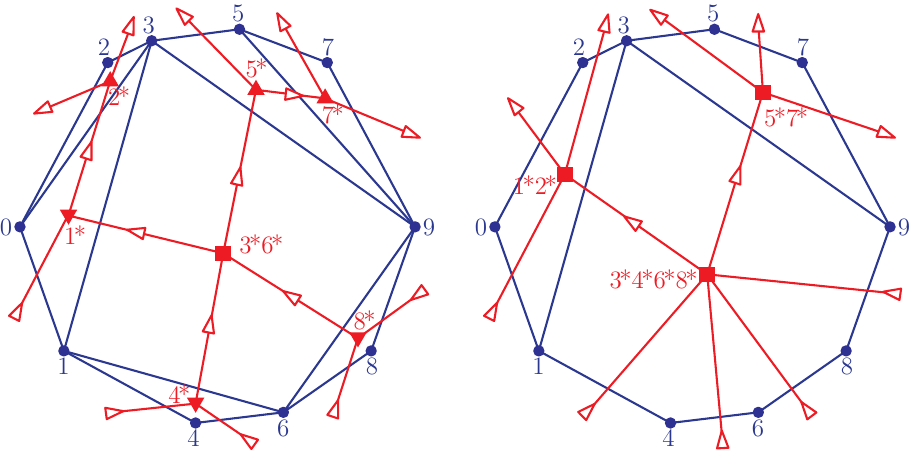}}
  \caption{Two dissections~$W\ex$ and~$\widetilde W\ex$ of the polygon~$\ver{P}\ex$ and their spines.}
  \label{fig:dissections}
\end{figure}
\end{exm}

\begin{exm}{2}{Loday's associahedron, continued}
\label{exm:Loday3}
The spine of any dissection of~$\ver{P}\loday$ is a tree naturally rooted by its only outgoing blossom, oriented towards its root, and where each node has at least~$2$ children. These trees are known as \defn{Schr\"oder trees}, see \eg~\cite{Stanley-SchroderTrees}.
\end{exm}

Let us mention that it is possible to characterize spines of dissections of~$\ver{P}$ in a similar way as we did for spines of triangulations of~$\ver{P}$ in Proposition~\ref{prop:alternativeSpines}. This extends the definition of~\cite{IgusaOstroff}.

\begin{proposition}
A directed tree~$\spine{W}$ with blossoms and whose internal nodes are labeled by subsets of~$[n+1]$ is the spine of a dissection of~$\ver{P}$ if and only if
\begin{enumerate}[(i)]
\item the labels of the nodes of~$\spine{W}$ form a partition of~$[n+1]$, and
\item at a node of~$\spine{W}$ labeled by~$X \subseteq [n+1]$, the source sets of the different incoming arcs belong to distinct intervals of~$[n+1] \ssm (X \cap D)$, while the sink sets of the different outgoing arcs belong to distinct intervals of~$[n+1] \ssm (X \cap U)$.
\end{enumerate}
\end{proposition}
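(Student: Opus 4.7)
The plan is to prove both directions by generalizing the argument behind Proposition~\ref{prop:alternativeSpines} from triangulations to dissections, using the fact that edges of the upper (resp.~lower) convex hull of each cell are in bijection with its outgoing (resp.~incoming) arcs, and that the intermediate up (resp.~down) vertices of that hull are precisely the labels of the node that separate these arcs.

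For the ``only if'' direction, given a dissection $W$, I would first establish~(i) by noting that every vertex $\ver{j}$ with $j \in [n+1]$ is an intermediate vertex of exactly one cell of~$W$, namely the unique cell whose interior is adjacent to $\ver{j}$ from both the left and the right. For~(ii), fix a cell $\ver{C}$ of $W$ with intermediate vertex set $X$, and consider its upper hull: its edges (from left to right) are exactly the edges of~$\ver{P}$ or~$W$ corresponding to the outgoing arcs at $\nod{C}$, and the intermediate vertices of the upper hull are exactly $X \cap U$. Each outgoing edge $\delta$ connects two consecutive vertices of the upper hull of~$C$, so its endpoints lie in $(X \cap U) \cup \{\text{leftmost, rightmost of } \ver{C}\}$. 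By the immediate extension of Lemma~\ref{lem:RHS} to dissections, the sink labels of $\delta^*$ form the set $\Abo(\delta)$ restricted to the subtree above $\delta$, which is contained in the open interval of $[n+1]$ between the endpoints of~$\delta$. Since these intervals lie in distinct components of $[n+1] \ssm (X \cap U)$ for distinct outgoing edges, the condition on outgoing arcs follows. The argument for incoming arcs and $X \cap D$ is symmetric, using the lower hull.

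For the ``if'' direction, I plan to proceed by induction on the number of internal nodes of $\spine{W}$, reconstructing a dissection of $\ver{P}$. The base case, a single internal node with all arcs being blossoms, corresponds to the trivial dissection $W = \{\ver{P}\}$, and condition~(i) forces the label set to be exactly $[n+1]$. For the inductive step, pick any internal node $\nod{C}$ all of whose arcs but one are blossoms; condition~(ii) together with the blossom pattern of $\nod{C}$ determines the cyclic sequence of up and down intermediate vertices of a would-be cell $\ver{C}$ with intermediate labels $X$, glued to the rest along the remaining arc $\delta^*$. I would then cut off~$\ver{C}$ along~$\delta$, producing a smaller polygon $\ver{P}'$ whose up/down vertex partition is inherited from~$\ver{P}$, and whose spine is the tree obtained from $\spine{W}$ by deleting $\nod{C}$ and its blossoms and turning $\delta^*$ into a new blossom (outgoing or incoming depending on whether $\ver{C}$ was above or below~$\delta$). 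This smaller tree satisfies (i) and (ii), so induction yields a dissection of $\ver{P}'$, which glues with $\ver{C}$ along $\delta$ to give the required dissection of $\ver{P}$.

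The main obstacle is the bookkeeping in the inductive step: verifying that the labels of $\nod{C}$ together with the intervals constrained by~(ii) force $\ver{C}$ to be realizable as a convex subpolygon of $\ver{P}$ with the correct leftmost and rightmost vertices to match the gluing along~$\delta$, and that the inherited up/down partition on $\ver{P}'$ is compatible with the reconstructed spine. An alternative that avoids this bookkeeping is to invoke Proposition~\ref{prop:alternativeSpines} directly: any tree satisfying (i) and (ii) can be refined by arbitrarily splitting each internal node into a chain of degree-three nodes respecting the intervals in~(ii), producing a tree that satisfies the conditions of Proposition~\ref{prop:alternativeSpines}, hence is the spine of some triangulation~$T$; contracting the refining arcs then yields the original tree as the spine of the dissection of $\ver{P}$ obtained from~$T$ by deleting the corresponding diagonals.
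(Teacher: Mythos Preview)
The paper does not actually prove this proposition: like Proposition~\ref{prop:alternativeSpines}, it is stated as a characterization and left to the reader. So there is no ``paper's own proof'' to compare against, and your proposal should be judged on its own merits.

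Your overall plan is sound, and both the ``only if'' argument via the upper/lower hull of each cell and the ``if'' argument via refinement to a triangulation are the natural routes. One point in your ``only if'' sketch is not quite right, though. You claim that for an outgoing edge~$\delta$ of the upper hull of a cell~$\ver{C}$, the sink set~$\Abo(\delta)$ is contained in the open interval of~$[n+1]$ between the endpoints of~$\delta$. This fails when the leftmost (or rightmost) vertex of~$\ver{C}$ is a down vertex: then~$\delta_\ell(\ver{C})$ has a down endpoint~$\ver{l}$, which itself lies in~$\Abo(\delta_\ell(\ver{C}))$, and vertices~$\ver{j}$ with~$j<l$ can also be above the supporting line. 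What you actually need, and what is true, is the weaker statement that~$\Abo(\delta)$ lies in a single interval of~$[n+1]\ssm(X\cap U)$; this follows cleanly from Lemma~\ref{lem:crucial}, since every~$u\in X\cap U$ lies in~$\Bel(\delta)$ and therefore~$[u]\subset\Bel(\delta)$ or~$[u,n+1]\subset\Bel(\delta)$. The assignment of different outgoing arcs to different intervals then comes from identifying which side of each~$u_j\in X\cap U$ the set~$\Abo(\delta_i)$ sits on.

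For the ``if'' direction, your second approach (refine each node into a local triangulation spine and invoke Proposition~\ref{prop:alternativeSpines}) is the cleaner one and works; just note that condition~(ii), read so that blossoms count as arcs with empty source/sink sets, forces each node labeled~$X$ to have exactly~$|X\cap D|+1$ incoming and~$|X\cap U|+1$ outgoing arcs, which is what lets you match them to the blossoms of a local spine on~$X$. Your inductive approach also works, but be aware that the leaf-like cell you cut off need not have its leftmost and rightmost vertices among its own labels~$X$; they are determined by the unique non-blossom arc and by which intervals are occupied by blossoms, so the ``realizability'' check is a little more delicate than your sketch suggests.
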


To conclude on spines of dissections, we indicate how to completely reconstruct the spine of a dissection~$W$ from the collection of sets~$\Bel(\delta)$, for the diagonals~$\delta \in W$. We call \defn{directed line graph} of a spine~$\spine{W}$ the directed graph whose nodes correspond to the arcs of~$\spine{W}$ (excluding edges incident to blossoms), and with an arc between the nodes corresponding of~$\delta^*$ and~$\delta'^*$ if the head of~$\delta^*$ coincides with the source of~$\delta'^*$. Although interesting to motivate the introduction of spines, the following statement is not used later and its proof is left to the reader.

\begin{proposition}
The directed line graph of the spine~$\spine{W}$ of a dissection~$W$ of~$\ver{P}$ is the graph whose nodes correspond to the diagonals~$\delta \in W$ (excluding the boundary diagonals of~$\ver{P}$) and with an arc from~$\delta$ to~$\delta'$ if and only if $\Bel(\delta) \subset \Bel(\delta')$ and there is no diagonal~$\delta'' \in W$ such that
\[
\Bel(\delta) \subset \Abo(\delta'') \subset \Bel(\delta')
\quad\qquad\text{or}\qquad\quad
\Bel(\delta) \subset \Bel(\delta'') \subset \Bel(\delta').
\]
As a consequece, it is possible to reconstruct the unlabeled spine~$\spine{W}$ if~$\Bel(\delta)$ is known for each diagonal~$\delta \in W$. Moreover, the label set of a node~$\nod{C}$ of~$\spine{W}$ is given by
\[
\bigcap_{\delta_\Out} \Bel(\delta_\Out) \ssm \bigcup_{\delta_\In} \Bel(\delta_\In),
\]
where the intersection runs over the diagonals~$\delta_\Out$ corresponding to the outgoing arcs of~$\spine{W}$ at~$\nod{C}$ and the union runs over the diagonals~$\delta_\In$ corresponding to the incoming arcs of~$\spine{W}$ at~$\nod{C}$.
\end{proposition}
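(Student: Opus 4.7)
The plan is to first lift Lemma~\ref{lem:RHS} to spines of dissections: removing any internal arc $\delta^*$ from $\spine{W}$ splits it into two connected components whose node labels partition $[n+1]$ as $\Bel(\delta) \sqcup \Abo(\delta)$, the source side carrying $\Bel(\delta)$. This extension is immediate by refining $W$ to a triangulation $T$ and observing that $\spine{W}$ is obtained from $\spine{T}$ by contracting the extra arcs, which does not affect the label partition across $\delta^*$. Once this is in hand, the entire proposition translates into set-inclusion statements about the $\Bel(\delta)$.

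For the line-graph characterization, note that by definition of $\spine{W}$ an arc from $\delta^*$ to $\delta'^*$ exists if and only if the target cell of $\delta^*$ equals the source cell of $\delta'^*$, that is, iff $\delta$ and $\delta'$ lie respectively on the lower and upper convex hull of a common cell $\ver{C}$ of $W$. For the forward direction, this configuration forces $\Bel(\delta) \subsetneq \Bel(\delta')$ by the extended Lemma~\ref{lem:RHS}, since the labels carried by $\nod{C}$ (the intermediate vertices of $\ver{C}$, of which there is at least one) lie in $\Bel(\delta') \ssm \Bel(\delta)$; moreover, no $\delta'' \in W$ can sandwich, because such a $\delta''$ would have to lie strictly in the interior of $\ver{C}$, contradicting that $W$ is a dissection. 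For the converse, let $\ver{C}_1$ be the cell above $\delta$ and $\ver{C}_2$ the cell below $\delta'$; if $\ver{C}_1 \ne \ver{C}_2$, any path in $\spine{W}$ from $\nod{C}_1$ to $\nod{C}_2$ traverses some internal arc, corresponding to a diagonal $\delta'' \in W$, and the extended Lemma shows that $\delta''$ furnishes a sandwich of either the $\Bel$-type or the $\Abo$-type depending on its orientation along the path, contradicting the hypothesis.

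With the line-graph description established, the unlabeled spine is reconstructed by identifying which line-graph nodes share a common source or target in $\spine{W}$: two arcs $\delta_1^*, \delta_2^*$ have the same target iff they have the same set of out-neighbors in the line graph, and symmetrically for common sources, each equivalence class becoming a node of $\spine{W}$. The label formula at $\nod{C}$ then follows directly from the extended Lemma~\ref{lem:RHS}: a label $j$ belongs to $\nod{C}$ iff $\ver{j}$ lies strictly below every upper-hull edge of $\ver{C}$ (so $j \in \Bel(\delta_\Out)$ for every outgoing $\delta_\Out$) and not strictly below any lower-hull edge of $\ver{C}$ (so $j \notin \Bel(\delta_\In)$ for any incoming $\delta_\In$). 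The main obstacle throughout is bookkeeping the endpoint conventions in the definitions of $\Abo$ and $\Bel$ (endpoints are counted in $\Bel$ if they are up vertices and in $\Abo$ if they are down vertices), which must be tracked carefully to ensure the strict versus non-strict inclusions align across both directions of the characterization and in the verification of the label formula.
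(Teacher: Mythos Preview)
The paper does not prove this proposition: it explicitly states that ``the following statement is not used later and its proof is left to the reader.'' Your approach is therefore not competing with any argument in the text, and it is essentially correct. Extending Lemma~\ref{lem:RHS} to dissections via the contraction of Lemma~\ref{lem:refinement} is the right first move, and reading off the label formula for~$\nod{C}$ from the resulting partition is exactly how it should go.

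Two spots could be tightened. In the forward direction, ``such a~$\delta''$ would have to lie strictly in the interior of~$\ver{C}$'' is geometric intuition; the clean argument is purely tree-combinatorial: if $\Bel(\delta) \subsetneq \Bel(\delta'') \subsetneq \Bel(\delta')$ then the source subtree of~$\delta^*$ is strictly contained in that of~$(\delta'')^*$, forcing $\nod{C} = t(\delta^*)$ into the source side of~$(\delta'')^*$, while $\Bel(\delta'') \subsetneq \Bel(\delta')$ forces $\nod{C} = s(\delta'^*)$ into its target side, a contradiction; the $\Abo$-type sandwich is ruled out symmetrically. In the converse, your claim that any~$\delta''$ on the path from~$\nod{C}_1$ to~$\nod{C}_2$ sandwiches is correct, but it relies on first checking that the hypothesis $\Bel(\delta) \subsetneq \Bel(\delta')$ places both~$\nod{C}_1$ and~$\nod{C}_2$ in the intersection of the subtree above~$\delta$ with the subtree below~$\delta'$, so that the path stays there and avoids~$\delta^*,\delta'^*$. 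Finally, in your reconstruction step, the criterion ``same out-neighbours in the line graph $\Leftrightarrow$ same target'' breaks for outgoing blossoms (distinct upper boundary edges all have empty out-neighbourhood), so those must be handled separately; this is a trivial fix but worth noting.
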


\begin{exm}{2}{Loday's associahedron, continued}
\label{exm:Loday4}
If~$W$ is a dissection of Loday's polygon~$\ver{P}\loday$, then the spine~$\spine{W}$ is isomorphic to the Hasse diagram of the poset formed by all diagonals of~$W$, where~$\delta < \delta'$ if~$\delta$ is below~$\delta'$. However, this coincidence only happens for Loday's polygon~$\ver{P}\loday$ (and for its reflection with respect to the horizontal axis): in general, the spine differs from its directed line graph.
\end{exm}


\subsection{Refinements and flips}
\label{subsec:refinement}

For two dissections~$W, \widetilde W$ of~$\ver{P}$, the cells of~$\widetilde W$ are unions of cells of~$W$ if and only if~$W \supseteq \widetilde W$ as sets of diagonals. We say that $W$ \defn{refines}~$\widetilde W$, and that~$\widetilde W$ \defn{coarsens}~$W$. The following statement is a direct consequence of the definitions, and is illustrated in \fref{fig:dissections}.

\begin{lemma}
\label{lem:refinement}
Consider two dissections~$W$ and~$\widetilde W$ of~$\ver{P}$ with $W\supseteq \widetilde W$. Then the spines~$\spine{W}$ and~$\spine{\widetilde W}$ can be constructed from each other as follows.
\begin{enumerate}[(i)]
\item Contracting the arcs~$\bigset{\delta^*}{\delta \in W \ssm \widetilde W}$ in the spine~$\spine{W}$ yields the spine~$\spine{\widetilde W}$.
\item Replacing each node~$\nod{C}$ of~$\spine{\widetilde W}$ by the spine of the dissection induced by~$W$ on the corresponding cell~$\ver{C}$ of~$\widetilde W$ yields $\spine{W}$.
\end{enumerate}
\end{lemma}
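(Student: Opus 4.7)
Parts (i) and (ii) describe inverse operations on the spines, so once (i) is established, (ii) follows by observing that substituting a sub-spine at a node is the inverse of contracting the corresponding arcs. I would therefore focus on (i) and, by induction on $|W \ssm \widetilde{W}|$ (contracting one arc at a time), reduce to the case $W = \widetilde{W} \cup \{\delta\}$ for a single internal diagonal $\delta = \ver{i}\ver{k}$ with $i < k$. In this case, adding $\delta$ splits a unique cell $\ver{C}$ of $\widetilde{W}$ into two cells $\ver{C}_1$ below $\delta$ and $\ver{C}_2$ above $\delta$, while leaving all other cells of $\widetilde{W}$ unchanged.

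With this setup, the verification that contracting $\delta^*$ in $\spine{W}$ yields $\spine{\widetilde{W}}$ decomposes into three parallel checks. First, for the node bijection, identifying the nodes $\nod{C_1}$ and $\nod{C_2}$ (the endpoints of $\delta^*$) while leaving every other node unchanged matches the cells of $W$ with those of $\widetilde{W}$, the identified node corresponding to $\nod{C}$. Second, the remaining arcs and blossoms of $\spine{W}$ are in canonical orientation-preserving bijection with those of $\spine{\widetilde{W}}$: any internal diagonal or boundary edge $\delta' \ne \delta$ incident to $\ver{C}$ is incident to exactly one of $\ver{C}_1, \ver{C}_2$ in $W$ (the one on the corresponding side of $\delta$), and the above/below relation with respect to $\delta'$ is preserved.

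The third and only geometrically substantive check is that the label of $\nod{C}$ equals the disjoint union of the labels of $\nod{C_1}$ and $\nod{C_2}$, i.e., that the intermediate vertices of $\ver{C}$ are the disjoint union of the intermediate vertices of $\ver{C}_1$ and $\ver{C}_2$. The shared vertices of $\ver{C}_1$ and $\ver{C}_2$ are precisely $\ver{i}$ and $\ver{k}$. Using that the vertex set of $\ver{C}$ is the union of those of $\ver{C}_1$ and $\ver{C}_2$ and comparing $x$-coordinates, one verifies that $\ver{i}$ is the leftmost vertex of $\ver{C}$ if and only if it is the leftmost of both $\ver{C}_1$ and $\ver{C}_2$, and analogously that $\ver{k}$ is the rightmost of $\ver{C}$ iff it is the rightmost of both $\ver{C}_1$ and $\ver{C}_2$. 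Moreover $\ver{i}$ can never be the rightmost of $\ver{C}_j$ (since $\ver{k}$ lies to its right in $\ver{C}_j$), and symmetrically for $\ver{k}$. A short case analysis on whether $\ver{i}$ and $\ver{k}$ are extremal in $\ver{C}$ then yields the desired set equality.

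The main obstacle is this last label-consistency check: bookkeeping which shared endpoints of $\delta$ remain extremal in the merged cell requires some care, but it reduces to the simple principle that a vertex $v$ is an $x$-extreme of $\ver{C}$ if and only if it is an $x$-extreme of every subcell $\ver{C}_j$ containing it. Once this is in place, steps one and two are purely combinatorial reformulations of the fact that adjoining $\delta$ to $\widetilde{W}$ is a local operation affecting only the single cell $\ver{C}$.
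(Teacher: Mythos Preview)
Your argument is correct. The paper does not actually prove this lemma: it states that ``the following statement is a direct consequence of the definitions'' and leaves it at that, so your inductive reduction to a single diagonal and the subsequent three-part check (nodes, arcs, labels) is considerably more detailed than what the paper provides.

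One small remark on the label-consistency step: the ``simple principle'' you isolate (that $v$ is $x$-extreme in $\ver{C}$ iff it is $x$-extreme in every subcell containing it) only yields that $\ver{i}$ is intermediate in $\ver{C}$ iff it is intermediate in \emph{at least one} of $\ver{C}_1,\ver{C}_2$. For the disjoint union you also need that $\ver{i}$ is leftmost in \emph{at least one} subcell, which follows because all vertices of $\ver{C}$ with label smaller than $i$ lie on a single boundary arc of $\ver{C}$ from $\ver{i}$ to $\ver{k}$ (by convex position), hence in a single subcell. Your ``short case analysis'' certainly covers this, but the stated principle alone does not quite capture it.
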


Each dissection~$W$ of~$\ver{P}$ corresponds to a face~$\face(W)$ of the associahedron~$\Asso$, and the inclusion poset of non-empty faces of the associahedron~$\Asso$ is (isomorphic to) the refinement poset of dissections of~$\ver{P}$. The next lemma gives the vertex and inequality descriptions of the~face~$\face(W)$.

\begin{lemma}[\protect{\cite[Cor.~2.11]{HohlwegLange}}]
\label{lem:faceDissection}
For any dissection~$W$ of~$\ver{P}$, the corresponding face~$\face(W)$ of the associahedron~$\Asso$ is given by
\[
\face(W) \; = \; \conv \set{\b{x}(T)}{\!\!\! \begin{array}{c} T \text{ triangulation of } \ver{P} \\ \text{that refines } W \end{array} \!\!\!} \; = \; H \cap \bigcap_{\delta \in W} H^=(\delta) \; \cap \bigcap_{\delta \vdash W} H^\ge(\delta),
\]
where~$\delta \vdash W$ denotes an internal diagonal~$\delta$ of~$\ver{P}$ compatible with~$W$ but not contained in~$W$, \ie which does not cross or coincide with any diagonal of~$W$. In the inequality description, each internal diagonal~$\delta \vdash W$ corresponds to a facet of~$\face(W)$.
\end{lemma}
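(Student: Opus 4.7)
My plan is to deduce the two equalities directly from the face-lattice structure of $\Asso$ obtained in Theorem~\ref{theo:associahedron}, combined with the standard fact that a bounded polytope equals the intersection of its affine hull with its facet-defining half-spaces. This approach bypasses the delicate step of showing that $H^\ge(\delta)$ is redundant whenever $\delta$ crosses some diagonal of~$W$, which would otherwise require cumbersome identities among the sums $\sum_{j \in \Bel(\delta)} x_j$ for families of crossing diagonals.

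For the vertex description, I would use that $\Asso$ is an associahedron, so its face poset is (reverse) isomorphic to the refinement poset of dissections of~$\ver{P}$, with $\face(W)$ corresponding to $W$. In the proof of Theorem~\ref{theo:associahedron} we observed that the facet of $\Asso$ corresponding to an internal diagonal~$\delta$ is supported by $H^=(\delta)$ and has as vertices precisely the points $\b{x}(T)$ with $\delta \in T$, by Proposition~\ref{prop:main}\,(ii). Since $\face(W)$ is the intersection of the facets corresponding to the diagonals in $W$, its vertices are exactly the points $\b{x}(T)$ where $T$ refines $W$, which gives the first equality.

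For the inequality description, I would first identify the affine hull $A$ of $\face(W)$. Every vertex of $\face(W)$ lies in $H \cap \bigcap_{\delta \in W} H^=(\delta)$, so $A$ is contained in this affine subspace. By the face-lattice correspondence, $\face(W)$ has codimension $|W|$ in $\Asso$, hence $\dim A = n - |W|$, which forces the hyperplanes $H^=(\delta)$ for $\delta \in W$ to cut independently inside~$H$ and yields $A = H \cap \bigcap_{\delta \in W} H^=(\delta)$. Then, since $\face(W)$ is a bounded polytope (as a face of the bounded polytope $\Asso$) with affine hull $A$, it equals the intersection of $A$ with its facet-defining half-spaces. Its facets correspond via the face lattice to dissections $W \cup \{\delta\}$ for $\delta \vdash W$, and arguing as in the vertex step (applied to $\face(W \cup \{\delta\})$ as a facet of $\face(W)$), each is supported by $H^=(\delta)$, with the relevant half-space being $H^\ge(\delta)$ since $\face(W) \subseteq \Asso \subseteq H^\ge(\delta)$. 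Putting this together yields
\[
\face(W) = H \cap \bigcap_{\delta \in W} H^=(\delta) \cap \bigcap_{\delta \vdash W} H^\ge(\delta),
\]
with each $\delta \vdash W$ providing a distinct facet, as claimed.

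The main obstacle is the dimension count pinning down $A$: one must carefully invoke the correspondence between codimension in $\Asso$ and cardinality of the dissection to conclude that $A$ coincides with $H \cap \bigcap_{\delta \in W} H^=(\delta)$. Once this is established, everything else follows from standard polytope theory and from Proposition~\ref{prop:main}, with no need to engage the algebra of crossing diagonals.
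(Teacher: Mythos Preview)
Your argument is sound, and in fact the paper does not supply its own proof of this lemma: it is stated as a direct reformulation of~\cite[Cor.~2.11]{HohlwegLange} and the authors explicitly omit proofs of such results in Section~\ref{sec:flips}. So there is no ``paper's proof'' to compare against; your derivation from Theorem~\ref{theo:associahedron} and Proposition~\ref{prop:main} is exactly the kind of standard polytope-theoretic verification one would expect.

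One point deserves to be made more explicit. In the step where you conclude that the affine hull~$A$ of~$\face(W)$ equals $H \cap \bigcap_{\delta \in W} H^=(\delta)$, the inclusion $A \subseteq H \cap \bigcap_{\delta \in W} H^=(\delta)$ together with $\dim A = n - |W|$ does not by itself force equality: you also need $\dim\big(H \cap \bigcap_{\delta \in W} H^=(\delta)\big) \le n - |W|$, i.e.\ that the normals $\one_{\Bel(\delta)}$ for $\delta \in W$ are linearly independent modulo~$\one_{[n+1]}$. This follows cleanly from the simplicity of~$\Asso$ (stated in the introduction): at any vertex~$\b{x}(T)$ with $T$ refining~$W$, the $n$ facet normals are linearly independent, and those for $\delta \in W$ form a subset. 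You allude to this when you say the dimension count ``forces the hyperplanes to cut independently'', but spelling out the appeal to simplicity would close the only visible gap.
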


Flips in triangulations are a key ingredient in the original proof of Proposition~\ref{prop:main} and Theorem~\ref{theo:associahedron} in~\cite{HohlwegLange}. Similarly, the evolution of spines while performing a flip will play an essential role in the remainder of this paper. The following statement extends~\cite[Lem.~2.6]{HohlwegLange} by relating flip directions explicitly to the oriented edges of a spine. As a consequence, every edge of the $1$-skeleton of~$\Asso$ gets a canonical orientation which will be used in Section~\ref{subsec:slopeIncreasingFlip}.

\begin{lemma}[\protect{\cite[Lem.~2.6]{HohlwegLange}}]
\label{lem:flip}
Let~$T$ and~$\widetilde T$ be two triangulations of~$\ver{P}$ related by a flip, let~$\delta \in T$ and $\tilde\delta \in \widetilde T$ be the two diagonals of~$\ver{P}$ such that~$T \ssm \delta = \widetilde T \ssm \tilde\delta$, and let~$i < j \in [n+1]$ label the two intermediate vertices of the quadrilateral with diagonals~$\delta$ and~$\tilde\delta$. If~$\delta^*$ is directed from~$\nod{i}$ to~$\nod{j}$, then~$\tilde\delta^*$ is directed from~$\nod{j}$ to~$\nod{i}$, and the difference~$\b{x}(\widetilde T) - \b{x}(T)$ is a positive multiple of~$e_i-e_j$.
\end{lemma}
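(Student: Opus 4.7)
The plan is to proceed in three steps: first identify the endpoints of both arcs, then pin down the direction of $\b{x}(\widetilde T) - \b{x}(T)$ via a hyperplane intersection argument, and finally determine its sign (and the orientation of $\tilde\delta^*$) using Proposition~\ref{prop:main}. For the endpoints, observe that the two triangles of $T$ sharing $\delta$ together form the quadrilateral $Q$, whose two intermediate vertices by x-coordinate are precisely $\ver{i}$ and $\ver{j}$. Since each of these two triangles has one of $\ver{i},\ver{j}$ as its own intermediate vertex, the arc $\delta^*$ connects $\nod{i}$ and $\nod{j}$ in $\spine{T}$; the same conclusion holds for $\tilde\delta^*$ in $\spine{\widetilde T}$, since $Q$ is unchanged by the flip.

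For the direction, the key observation is that any diagonal $\delta' \in T \cap \widetilde T$ does not separate $\ver{i}$ from $\ver{j}$: indeed, $Q$ lies entirely on one side of $\delta'$ because no two diagonals of $T$ cross. Hence $[i \in \Bel(\delta')] = [j \in \Bel(\delta')]$ for every such $\delta'$, so $e_i - e_j$ is orthogonal to the normal of each $H^=(\delta')$ and to the normal of $H$. By Proposition~\ref{prop:main}(ii), both $\b{x}(T)$ and $\b{x}(\widetilde T)$ lie in the affine flat $F \eqdef H \cap \bigcap_{\delta' \in T \cap \widetilde T} H^=(\delta')$. The affine-independence argument in the proof of Theorem~\ref{theo:associahedron} shows that $\Asso$ is simple (exactly $n$ facet-hyperplanes meet linearly independently at each vertex), so $F$ is exactly one-dimensional --- it is the edge of $\Asso$ through the distinct vertices $\b{x}(T)$ and $\b{x}(\widetilde T)$. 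Therefore $\b{x}(\widetilde T) - \b{x}(T) = \alpha(e_i - e_j)$ for a unique nonzero $\alpha \in \R$.

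For the sign, Lemma~\ref{lem:RHS} turns the hypothesis $\delta^* : \nod{i} \to \nod{j}$ into $i \in \Bel(\delta)$ and $j \in \Abo(\delta)$, so $j \notin \Bel(\delta)$. Proposition~\ref{prop:main} places $\b{x}(T)$ on $H^=(\delta)$ but $\b{x}(\widetilde T)$ strictly in the open half-space cut out by $H^\ge(\delta)$ (since $\delta \notin \widetilde T$); summing coordinates over $\Bel(\delta)$ yields
\[
0 < \sum_{k \in \Bel(\delta)} \bigl(x_k(\widetilde T) - x_k(T)\bigr) = \alpha\bigl([i \in \Bel(\delta)] - [j \in \Bel(\delta)]\bigr) = \alpha,
\]
hence $\alpha > 0$. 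If $\tilde\delta^*$ were oriented $\nod{i} \to \nod{j}$, applying the same sign argument to the reverse flip from $\widetilde T$ to $T$ would give $\b{x}(T) - \b{x}(\widetilde T) \in \R_{>0}(e_i - e_j)$, contradicting $\b{x}(T) - \b{x}(\widetilde T) = -\alpha(e_i - e_j)$ with $\alpha > 0$; hence $\tilde\delta^*$ is oriented $\nod{j} \to \nod{i}$.

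I expect the only technical subtlety to be the one-dimensionality of $F$, which invokes the simplicity of $\Asso$ from Theorem~\ref{theo:associahedron}; once that is in hand, everything else is a clean application of Proposition~\ref{prop:main} and Lemma~\ref{lem:RHS}, with the orientation reversal obtained for free from the sign computation by applying the same reasoning to the reverse flip.
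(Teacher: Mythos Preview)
Your proof is correct, but it takes a genuinely different route from the paper's. The paper argues locally: it invokes Lemma~\ref{lem:refinement} to reduce to the case where~$\ver{P}$ is a square, then checks the four possible up/down configurations of~$\ver{i}$ and~$\ver{j}$ by hand (see \fref{fig:fourCases}), reading off both the orientation reversal and the explicit value~$\alpha = |L| \cdot |R|$ (a product of blossom counts in two subtrees) directly. Your argument is instead global and polytope-theoretic: you pin down the direction of~$\b{x}(\widetilde T)-\b{x}(T)$ by intersecting the $n-1$ common facet hyperplanes inside~$H$, then extract the sign and the orientation of~$\tilde\delta^*$ from the strict part of Proposition~\ref{prop:main}. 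The trade-offs are clear. Your approach is case-free and conceptually clean, but it leans on Theorem~\ref{theo:associahedron} (for the simplicity of~$\Asso$, hence the one-dimensionality of~$F$), so it only works once the realization theorem is already in place; it also does not yield the explicit formula for~$\alpha$. The paper's approach is logically prior to Theorem~\ref{theo:associahedron} (it uses only the refinement lemma) and delivers the exact value of~$\alpha$, at the cost of a small case analysis. One minor remark: your justification that~$i$ and~$j$ lie on the same side of each~$\delta' \in T \cap \widetilde T$ via ``$Q$ lies entirely on one side'' is correct but slightly informal when~$\delta'$ is a side of~$Q$; the cleanest way to see it is via Lemma~\ref{lem:RHS}, since~$\nod{i}$ and~$\nod{j}$ are adjacent in~$\spine{T}$ and hence lie in the same component of~$\spine{T} \ssm \delta'^*$ for~$\delta' \ne \delta$.
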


\begin{proof}
\begin{figure}
  \centerline{
  \begin{overpic}[width=\textwidth]{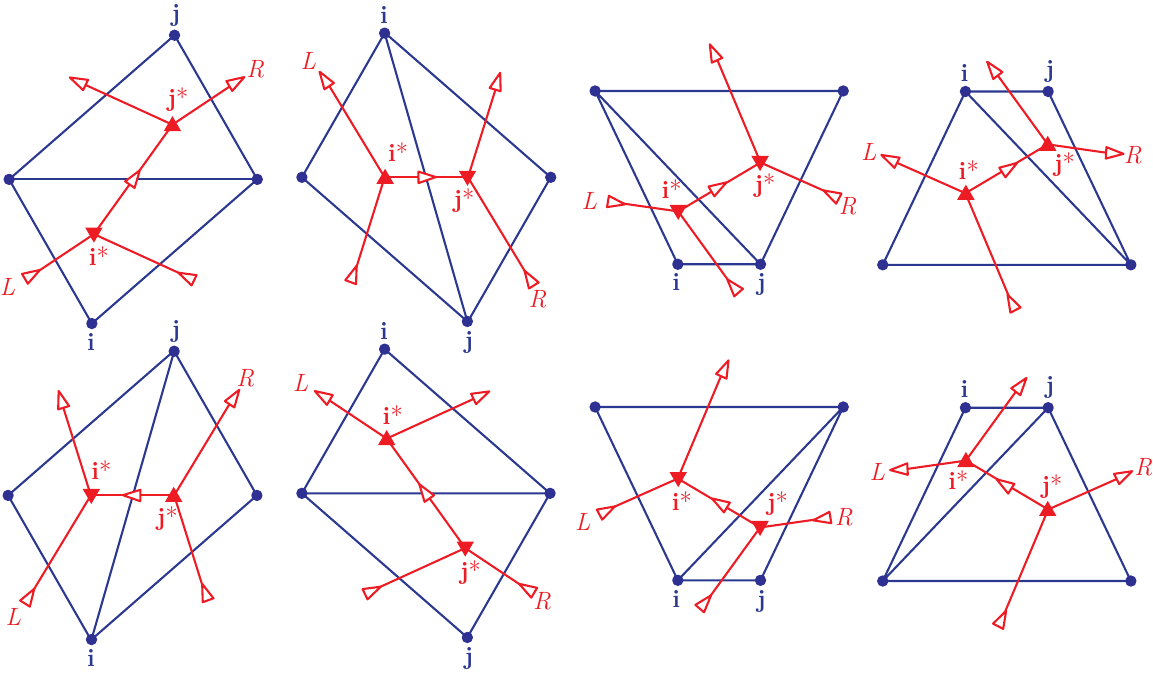}
		\put(7,43.5){\color{darkbluepic}$\delta$}
		\put(36,48){\color{darkbluepic}$\delta$}
		\put(57,45.5){\color{darkbluepic}$\delta$}
		\put(92.5,38){\color{darkbluepic}$\delta$}
		\put(11.5,21){\color{darkbluepic}$\tilde\delta$}
		\put(41,16.3){\color{darkbluepic}$\tilde\delta$}
		\put(67,18.3){\color{darkbluepic}$\tilde\delta$}
		\put(81,10){\color{darkbluepic}$\tilde\delta$}
		\put(-3,42.5){\color{darkbluepic}$T$}
		\put(-3,14.8){\color{darkbluepic}$\widetilde T$}
  \end{overpic}
  }
  \vspace{-.2cm}
  \caption{The four possible configurations of a square. Triangulations in the same column relate by a flip.}
  \label{fig:fourCases}
\end{figure}
Lemma~\ref{lem:refinement} ensures that the endpoints of~$\delta^*$ and~$\tilde\delta^*$ are necessarily~$\nod{i}$ and~$\nod{j}$. To see that $\delta^*$ and~$\tilde\delta^*$ have opposite orientations, it is enough to consider the case when~$\ver{P}$ is a square, again by Lemma~\ref{lem:refinement}. In this case, the property follows from a straightforward case analysis on the possible positions (up or down) of the vertices~$\ver{i}$ and~$\ver{j}$. The four different cases are illustrated in \fref{fig:fourCases}. Finally, in these four cases, we have~$\b{x}(\widetilde T) - \b{x}(T) = \alpha(e_i-e_j)$, where $\alpha$ is the product of the numbers of leaves in the subtrees of the spine~$\spine{T}$ labeled by~$L$ and~$R$ in \fref{fig:fourCases}.
\end{proof}

\begin{exm}{2}{Loday's associahedron, continued}
\label{exm:Loday5}
In Loday's polygon~$\ver{P}\loday$, we have seen that maximal spines are binary trees. The flip operation translates to the classical \defn{rotation} operation on binary trees~\cite[Section~3.2]{CormenLeisersonRivestStein}. Rotations were popularized when D.~Sleator, R.~Tarjan and W.~Thurston proved the diameter of the $n$-dimensional associahedron to be~$2n-4$ for sufficiently large~$n$~\cite{SleatorTarjanThurston}. L.~Pournin recently proved that this bound holds as soon as~$n > 9$~\cite{Pournin}.
\end{exm}


\subsection{Slope increasing flips}
\label{subsec:slopeIncreasingFlip}

By construction, the $1$-skeleton of the associahedron~$\Asso$ corresponds to the flip graph on the triangulations of~$\ver{P}$. In this section, we discuss relevant properties of this $1$-skeleton oriented along the direction~$\ver{U} \eqdef (n,n-2,\dots,-n+2,-n) \in \R^{n+1}$. The significance of this direction, in connection with the permutahedron and the weak order, will be discussed in Section~\ref{sec:normalFan}. For now, it is enough to observe that this orientation corresponds to slope increasing flips, as stated in the following lemma.

\begin{lemma}
\label{lem:slopeIncreasing}
Let~$T$ and~$\widetilde T$ be two triangulations of~$\ver{P}$ related by a flip, let~$\delta \in T$ and~$\tilde \delta \in \widetilde T$ be the two diagonals of~$\ver{P}$ such that~$T \ssm \delta = \widetilde T \ssm \tilde \delta$, and let~$i < j \in [n+1]$ label the two intermediate vertices of the quadrilateral with diagonals~$\delta$ and~$\tilde \delta$. Then the following assertions are equivalent.
\begin{enumerate}[(i)]
\item The vector~$\b{x}(\widetilde T) - \b{x}(T)$ is in the direction of~$\b{U}$, that is, $\dotprod{\b{U}}{\b{x}(\widetilde T) - \b{x}(T)} > 0$.
\item The arc~$\delta^*$ of~$\spine{T}$ is directed from~$\nod{i}$ to~$\nod{j}$, and the arc~$\tilde \delta^*$ of~$\spine{\widetilde T}$ is directed from~$\nod{j}$ to~$\nod{i}$.
\item The slope of~$\delta$ is smaller than the slope of~$\tilde \delta$.
\end{enumerate}
\end{lemma}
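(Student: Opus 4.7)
The plan is to establish (i)$\Leftrightarrow$(ii) as a direct consequence of Lemma~\ref{lem:flip}, and then to obtain (ii)$\Leftrightarrow$(iii) by a short planar-geometry argument organized around the intersection point of $\delta$ and $\tilde\delta$.

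For (i)$\Leftrightarrow$(ii), I invoke Lemma~\ref{lem:flip}: depending on the orientation of $\delta^*$, the vector $\b{x}(\widetilde T)-\b{x}(T)$ is a positive multiple of either $e_i-e_j$ or $e_j-e_i$. Since $\b{U}=(n,n-2,\dots,-n)$ has strictly decreasing coordinates with constant step $2$, I compute $\dotprod{\b{U}}{e_i-e_j}=2(j-i)>0$ whenever $i<j$, so the sign of $\dotprod{\b{U}}{\b{x}(\widetilde T)-\b{x}(T)}$ matches the orientation of $\delta^*$ exactly.

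For (ii)$\Leftrightarrow$(iii), I aim at a single argument valid for all up/down configurations of $\ver{i},\ver{j}$. Denote by $\b{A},\b{B}$ (resp.\ $\b{C},\b{D}$) the endpoints of $\delta$ (resp.\ $\tilde\delta$) in increasing $x$-order. Any vertical line with abscissa strictly between $x_i$ and $x_j$ separates $\{\ver{\ell},\ver{i}\}$ from $\{\ver{j},\ver{r}\}$, and in a convex quadrilateral this forces $\ver{\ell}\ver{i}$ and $\ver{j}\ver{r}$ to be boundary edges rather than diagonals. Hence the left endpoints of the two diagonals are exactly $\ver{\ell}$ and $\ver{i}$, yielding $\{\b{A},\b{C}\}=\{\ver{\ell},\ver{i}\}$ and symmetrically $\{\b{B},\b{D}\}=\{\ver{j},\ver{r}\}$. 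Independently of the configuration, it follows that the two triangles of $T$ incident to $\delta$ are $\b{A}\b{B}\b{C}$, whose middle $x$-coordinate vertex is $\ver{i}$ (so label $\nod{i}$), and $\b{A}\b{B}\b{D}$ (label $\nod{j}$).

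It then remains to translate the slope inequality into the spine orientation using the crossing point $p$ of $\delta$ and $\tilde\delta$. Assuming slope$(\delta)<\,$slope$(\tilde\delta)$, both lines pass through $p$, so the line supporting $\delta$ lies strictly below that of $\tilde\delta$ on $\{x>x_p\}$ and strictly above on $\{x<x_p\}$. Applying this at $\b{C}$ and $\b{D}$, which lie on $\tilde\delta$ with $x_C<x_p<x_D$, places $\b{C}$ below the line of $\delta$ and $\b{D}$ above it. Therefore $\nod{i}=\b{A}\b{B}\b{C}$ is the triangle below $\delta$ and $\nod{j}=\b{A}\b{B}\b{D}$ the one above, so $\delta^*$ is oriented from $\nod{i}$ to $\nod{j}$ by definition of the spine; the converse follows because two distinct diagonals of a convex quadrilateral cross transversally and therefore have distinct slopes. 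The main (mild) obstacle is really just the uniform identification $\{\b{A},\b{C}\}=\{\ver{\ell},\ver{i}\}$; once it is in place, the slope-to-orientation translation is a one-line observation at $p$ and no case analysis on the positions of $\ver{i},\ver{j}$ is needed.
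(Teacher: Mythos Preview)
Your proof is correct. The paper does not actually give a proof of this lemma: the surrounding section explicitly says that most statements there are immediate or are reformulations of results from~\cite{HohlwegLange}, and only Lemma~\ref{lem:flip} is reproved. So there is no paper proof to compare against beyond the implicit ``left to the reader''.

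That said, here is how your argument relates to what the paper sets up. Your (i)$\Leftrightarrow$(ii) is exactly the intended one-line consequence of Lemma~\ref{lem:flip}: the edge direction is $\pm(e_i-e_j)$ with the sign determined by the orientation of $\delta^*$, and $\dotprod{\b{U}}{e_i-e_j}=2(j-i)>0$. For (ii)$\Leftrightarrow$(iii), the paper's proof of Lemma~\ref{lem:flip} reduces to the square and appeals to the four up/down configurations of~$\ver{i}$ and~$\ver{j}$ depicted in \fref{fig:fourCases}; one would naturally read off (iii) from the same picture. Your route is genuinely different and a bit nicer: by first pinning down $\{\b{A},\b{C}\}=\{\ver{\ell},\ver{i}\}$ and $\{\b{B},\b{D}\}=\{\ver{j},\ver{r}\}$ via the vertical separating line, you identify the two triangles adjacent to $\delta$ as $\nod{i}=\b{A}\b{B}\b{C}$ and $\nod{j}=\b{A}\b{B}\b{D}$ uniformly, and then the crossing point $p$ lets you place $\b{C}$ below and $\b{D}$ above the line of $\delta$ in one stroke. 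This replaces the four-case picture by a single transversality observation, at the cost of the small combinatorial lemma that $\ver{\ell}\ver{i}$ and $\ver{j}\ver{r}$ must be boundary edges of the quadrilateral. Both approaches are short; yours has the advantage of being case-free.
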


If these conditions are satisfied, we say that the flip from~$T$ to~$\widetilde T$ is \defn{slope increasing}. For example, all flips of \fref{fig:fourCases}, oriented from the upper triangulation to the lower one, are slope increasing flips. We consider the directed graph of slope increasing flips~$\flipGraph(\ver{P})$, with one node for each triangulation of~$\ver{P}$, and one arc between two triangulations related by a slope increasing flip. Lemma~\ref{lem:slopeIncreasing} can be reformulated as follows.
 
\begin{corollary}[\protect{\cite[Thm.~6.2]{Reading-CambrianLattices}}]
The graph~$\flipGraph(\ver{P})$ of slope increasing flips on the triangulations of~$\ver{P}$ is (isomorphic to) the $1$-skeleton of the associahedron~$\Asso$ oriented by the linear functional~${\b{x} \mapsto \dotprod{\b{U}}{\b{x}}}$.
\end{corollary}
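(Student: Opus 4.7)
The plan is to observe that this corollary is essentially an immediate packaging of Theorem~\ref{theo:associahedron}, Lemma~\ref{lem:flip} and Lemma~\ref{lem:slopeIncreasing}, with only a small sanity check to verify that the functional $\b{x}\mapsto\dotprod{\b{U}}{\b{x}}$ actually distinguishes the two endpoints of every edge.

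First, I would recall that by Theorem~\ref{theo:associahedron} the vertices of~$\Asso$ are exactly the points~$\b{x}(T)$ for~$T$ a triangulation of~$\ver{P}$, and that (as a consequence of the associahedron face structure recorded in Lemma~\ref{lem:faceDissection}, applied to a dissection with a single internal diagonal) two such vertices~$\b{x}(T)$ and~$\b{x}(\widetilde T)$ are joined by an edge of~$\Asso$ if and only if~$T$ and~$\widetilde T$ differ by a single flip. Hence the underlying undirected $1$-skeleton of~$\Asso$ is canonically isomorphic to the undirected flip graph on the triangulations of~$\ver{P}$, with the same vertex set as~$\flipGraph(\ver{P})$.

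Next, I would show that the linear functional~$\b{x}\mapsto\dotprod{\b{U}}{\b{x}}$ takes distinct values on the two endpoints of every edge of~$\Asso$, and therefore induces a well-defined orientation of the $1$-skeleton. Indeed, if~$T,\widetilde T$ are joined by a flip exchanging diagonals with intermediate vertex labels $i<j$, then by Lemma~\ref{lem:flip} the edge direction $\b{x}(\widetilde T)-\b{x}(T)$ is a nonzero positive multiple of~$e_i-e_j$. Since the coordinates of~$\b{U}=(n,n-2,\dots,-n+2,-n)$ are pairwise distinct (in fact $\dotprod{\b{U}}{e_i-e_j}=2(j-i)$), the pairing $\dotprod{\b{U}}{\b{x}(\widetilde T)-\b{x}(T)}$ is nonzero, so the functional orients this edge.

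Finally, the orientation induced by~$\b{U}$ coincides with that of~$\flipGraph(\ver{P})$: the edge corresponding to a flip is directed from~$\b{x}(T)$ to~$\b{x}(\widetilde T)$ in the $1$-skeleton precisely when $\dotprod{\b{U}}{\b{x}(\widetilde T)-\b{x}(T)}>0$, which by the equivalence of conditions (i) and (iii) in Lemma~\ref{lem:slopeIncreasing} is exactly the condition that the flip from~$T$ to~$\widetilde T$ be slope increasing, i.e.\ that it is an arc of~$\flipGraph(\ver{P})$. Thus the bijection $T\mapsto\b{x}(T)$ transports the arcs of $\flipGraph(\ver{P})$ onto the edges of the $1$-skeleton of~$\Asso$ oriented by~$\b{U}$, which proves the corollary. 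There is no real obstacle here; the only point that requires a moment of care is the non-vanishing of~$\dotprod{\b{U}}{e_i-e_j}$, for which the specific choice of~$\b{U}$ with pairwise distinct coordinates is essential.
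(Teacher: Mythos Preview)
Your argument is correct and matches the paper's approach: the paper gives no separate proof for this corollary, stating only that it is a reformulation of Lemma~\ref{lem:slopeIncreasing}. You have simply spelled out the implicit details (identification of vertices and edges via Theorem~\ref{theo:associahedron} and Lemma~\ref{lem:faceDissection}, genericity of~$\b{U}$ from Lemma~\ref{lem:flip}, and the orientation match via Lemma~\ref{lem:slopeIncreasing}).
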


The oriented graph~$\flipGraph(\ver{P})$ is clearly acyclic, and its transitive closure is known to be a lattice. It is an example of a \defn{Cambrian lattice} of type~$A$~\cite{Reading-CambrianLattices}, see Section~\ref{sec:existingWork}. The minimal and maximal elements of this lattice can be easily described as follows. The corresponding triangulations, deprived of the lattice of slope increasing flips, appear already in~\cite[Lem.~2.2 and Cor.~2.3]{HohlwegLange}.

\begin{proposition}
\label{prop:greedy}
The slope increasing flip graph~$\flipGraph(\ver{P})$ has a unique source~$\minTriang(\ver{P}) \eqdef \{\minDiag^{\ver{P}}_1, \dots, \minDiag^{\ver{P}}_n\}$ and a unique sink~$\maxTriang(\ver{P}) \eqdef \{\maxDiag^{\ver{P}}_1, \dots, \maxDiag^{\ver{P}}_n\}$, where the endpoints of the diagonal~$\minDiag^{\ver{P}}_i$ are labeled by
\[
\max \big( \{0\} \cup (U \cap [i]) \big) \quad\text{and}\quad \min \big( (D \ssm [i]) \cup \{n+2\} \big),
\]
while the endpoints of the diagonal~$\maxDiag^{\ver{P}}_i$ are labeled by
\[
\min \big( (U \ssm [n+1-i]) \cup \{n+2\} \big) \quad\text{and}\quad \max \big( \{0\} \cup (D \cap [n+1-i]) \big).
\]
The associated vertices of~$\Asso$ are~$\b{x}(\minTriang(\ver{P})) = (1, 2, \dots, n+1)$ and~$\b{x}(\maxTriang(\ver{P})) = (n+1, n, \dots, 1)$. The spines~$\spine{\minTriang(\ver{P})}$ and~$\spine{\maxTriang(\ver{P})}$ are directed and label-monotone paths with blossoms. Moreover, the collections of sets~$\Bel(\minDiag^{\ver{P}}_i) = [i]$, for~$i \in [n]$, and~$\Bel(\maxDiag^{\ver{P}}_i) = [n+2-i, n+1]$, for~$i \in [n]$, ordered by inclusion are both linear orders.
\end{proposition}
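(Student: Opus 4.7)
The plan is to first establish that $\flipGraph(\ver{P})$ has unique source and sink via a genericity argument, then to build these triangulations explicitly from their spines and verify the claimed formulas. By the corollary following Lemma~\ref{lem:slopeIncreasing}, the graph $\flipGraph(\ver{P})$ is the $1$-skeleton of $\Asso$ oriented by $\b{x} \mapsto \dotprod{\b{U}}{\b{x}}$. Since the coordinates of $\b{U} = (n, n-2, \ldots, -n)$ are pairwise distinct, $\dotprod{\b{U}}{e_i - e_j} = 2(j-i) \neq 0$ for all $i \neq j$, while by Lemma~\ref{lem:flip} every edge of $\Asso$ is parallel to some $e_i - e_j$. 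The linear functional $\dotprod{\b{U}}{\cdot}$ is therefore strictly monotone along every edge of $\Asso$ and attains its minimum and maximum at unique vertices, yielding a unique source $\minTriang(\ver{P})$ and a unique sink $\maxTriang(\ver{P})$.

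Next I will characterize these combinatorially: by Lemma~\ref{lem:slopeIncreasing}(ii), $T$ is a source iff every arc of $\spine{T}$ is directed from its smaller-labeled endpoint to its larger-labeled one. I then propose as candidate source spine the label-monotone directed path $\nod{1} \to \nod{2} \to \cdots \to \nod{n+1}$, attaching at each interior node $\nod{j}$ a single blossom (outgoing if $j \in U$, incoming if $j \in D$) and at each of $\nod{1}$ and $\nod{n+1}$ two blossoms whose orientations are forced by the $(1,2)$ or $(2,1)$ indegree/outdegree constraint on up/down nodes. Proposition~\ref{prop:alternativeSpines} then confirms this oriented labeled tree is the spine of a genuine triangulation of~$\ver{P}$: at each up node $\nod{j}$, the outgoing arc leads to the subtree labeled $\{j+1,\ldots,n+1\}$ while the outgoing blossom carries no labels (vacuously $<j$), and dually for down nodes. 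By the uniqueness established above, this triangulation must be $\minTriang(\ver{P})$.

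To identify the diagonals, I will invoke Lemma~\ref{lem:RHS}: the $i$-th internal arc of the path spine corresponds to a diagonal $\minDiag^{\ver{P}}_i$ with $\Bel(\minDiag^{\ver{P}}_i) = [i]$. I will then read off the endpoints $a < b$ of $\minDiag^{\ver{P}}_i$ from the constraints imposed by $\Bel = [i]$ together with convexity of $\ver{P}$ and the endpoint conventions for $\Bel$: any up vertex strictly between $\ver{a}$ and $\ver{b}$ lies above the supporting line (hence in $\Abo$, so its label must exceed $i$), any down vertex strictly between lies below (hence in $\Bel$, so its label must not exceed $i$), while an up endpoint belongs to $\Bel$ and a down endpoint to $\Abo$. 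These conditions uniquely force $a = \max(\{0\} \cup (U \cap [i]))$ and $b = \min((D \ssm [i]) \cup \{n+2\})$. The identities $\Bel(\minDiag^{\ver{P}}_i) = [i]$ trivially form the linear order $[1] \subset \cdots \subset [n]$ under inclusion.

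The final step is a direct blossom count on the path spine for the vertex coordinates. With $2$ blossoms at $\nod{1}$ and $\nod{n+1}$ and $1$ at each interior node, the subtree of $\spine{\minTriang(\ver{P})} \ssm \nod{j}$ reached via $\nod{j-1}$ contains $j$ blossoms and the one via $\nod{j+1}$ contains $n+2-j$, while the blossom directly at $\nod{j}$ (when present) forms a singleton subtree. Combining this with the formula defining $\b{x}(T)$ yields $x_j(\minTriang(\ver{P})) = j$ in every case (down or up, interior or endpoint), so $\b{x}(\minTriang(\ver{P})) = (1, 2, \ldots, n+1)$. The sink analysis is fully symmetric, obtained by reversing all arc orientations in the path and sending $j \leftrightarrow n+2-j$, which yields the claimed formulas for $\maxDiag^{\ver{P}}_i$, the linear order $\Bel(\maxDiag^{\ver{P}}_1) \subset \cdots \subset \Bel(\maxDiag^{\ver{P}}_n)$, and $\b{x}(\maxTriang(\ver{P})) = (n+1, n, \ldots, 1)$. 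The main obstacle is the geometric endpoint identification in the third paragraph, which requires carefully combining the combinatorial data $U, D$ with the convex-geometric definition of $\Bel$ and the endpoint inclusion rules.
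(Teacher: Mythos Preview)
Your argument is correct and complete in outline. The paper itself does not prove this proposition: it states the result with a pointer to~\cite[Lem.~2.2 and Cor.~2.3]{HohlwegLange} and otherwise relies on the surrounding machinery (Lemma~\ref{lem:flip}, Lemma~\ref{lem:slopeIncreasing}, and the corollary identifying~$\flipGraph(\ver{P})$ with the $\b{U}$-oriented $1$-skeleton). Your proof is precisely the natural one suggested by that machinery, so in spirit it is ``the same approach as the paper''; it simply fills in what the paper leaves implicit.

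One remark on the step you yourself flag as the obstacle. The constraints you list for the endpoints~$a<b$ --- that up (resp.\ down) vertices strictly between~$\ver{a}$ and~$\ver{b}$ lie above (resp.\ below) the supporting line, together with the endpoint inclusion rules --- are correct, but as stated they do not by themselves pin down~$a$ and~$b$: you have said nothing about labels~$j<a$ or~$j>b$, and several endpoint pairs can satisfy your listed conditions. The clean way to close this is not to \emph{derive} the endpoints from~$\Bel(\minDiag_i)=[i]$, but rather to \emph{verify} that the proposed pair $a=\max(\{0\}\cup(U\cap[i]))$, $b=\min((D\ssm[i])\cup\{n+2\})$ yields a diagonal with~$\Bel(\ver{a}\ver{b})=[i]$ (a direct convexity check: the arc of~$\partial\ver{P}$ from~$\ver{a}$ leftward to~$\ver{0}$ and then rightward to~$\ver{b}$ lies below the chord, and its intermediate labels together with the endpoint conventions give exactly~$[i]$), and then conclude by injectivity of~$\delta\mapsto\Bel(\delta)$ on internal diagonals (immediate from Theorem~\ref{theo:associahedron}, since distinct facets of~$\Asso$ have distinct outer normals~$\one_{\Bel(\delta)}$). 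Your blossom count for~$\b{x}(\minTriang(\ver{P}))$ is correct as written.
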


\begin{exm}{1}{continued}
\label{exm:small4}
In \fref{fig:flip}, the flip from the triangulation~$T\ex$ (left) to the triangulation~$\widetilde T\ex$ (right) is increasing. \fref{fig:greedy} represents the source~$\minTriang(\ver{P}\ex)$ and the sink~$\maxTriang(\ver{P}\ex)$ of the slope increasing flip graph~$\flipGraph(\ver{P}\ex)$ for the polygon~$\ver{P}\ex$ of Example~\ref{exm:small1}.

\begin{figure}
  \centerline{\includegraphics[width=\textwidth]{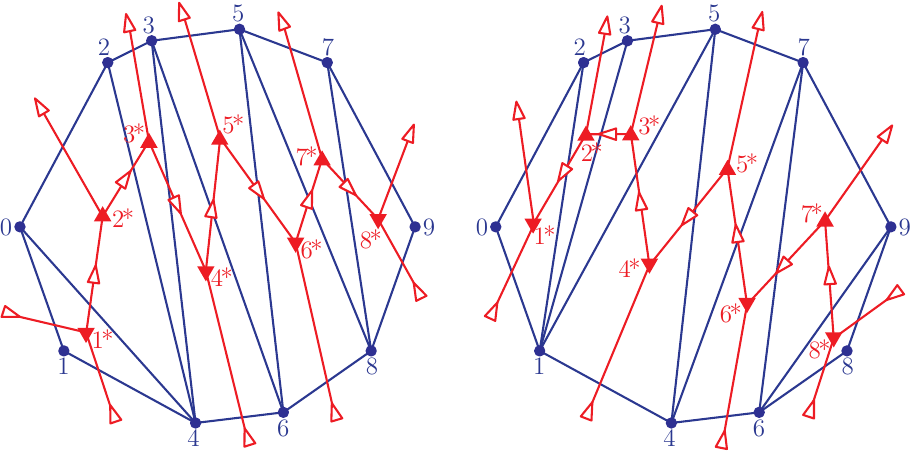}}
  \vspace{-.2cm}
  \caption{The source~$\minTriang(\ver{P}\ex)$ and the sink~$\maxTriang(\ver{P}\ex)$ of the slope increasing flip graph~$\flipGraph(\ver{P}\ex)$.}
  \label{fig:greedy}
\end{figure}
\end{exm}

\begin{exm}{2}{Loday's associahedron, continued}
\label{exm:Loday6}
For Loday's polygon~$\ver{P}\loday$, the transitive closure of the graph of slope increasing flips is the famous \defn{Tamari lattice}~\cite{TamariFestschrift}.
The minimal and maximal triangulations~$\minTriang(\ver{P})$ and~$\maxTriang(\ver{P})$ are the fan triangulations whose internal diagonals are all connected to the vertices~$\b{0}$ and~$\b{n+2}$ respectively, and the corresponding spines~$\spine{\minTriang(\ver{P})}$ and~$\spine{\maxTriang(\ver{P})}$ are left and right combs respectively.
\end{exm}

Further properties of the slope increasing flip lattice, in connection with the weak order on the permutations of~$[n+1]$ are discussed later in Section~\ref{sec:latticeQuotient}.


\section{Spines and normal fans}
\label{sec:normalFan}

This section focusses on the normal fan of the associahedron~$\Asso$. The main observation is that the normal cone of a face of~$\Asso$ is described by the spine of that face. In particular, we explore the connection between the permutahedron~$\Perm$, the associahedron~$\Asso$, and the parallelepiped~$\Para$ formed by the facets of~$\Perm$ incident to the vertices~$(1, 2, \dots, n+1)$ or~$(n+1, n, \dots, 1)$. The normal fans of these polytopes are successive coarsenings, thus inducing natural surjections from the permutations of~$[n+1]$ to the triangulations of~$\ver{P}$, and from the triangulations of~$\ver{P}$ to the binary words of length~$n$. We describe these surjections combinatorially in terms of spines. Essentially all statements in this section are combinations of Lemma~\ref{lem:flip} with results of the dictionary between preposets and braid cones discussed by A.~Postnikov, V.~Reiner and L.~Williams in~\cite[Sect.~3.4]{PostnikovReinerWilliams}.


\subsection{Maximal normal cones}
\label{subsec:maximalNormalCones}

Using spines, we can describe the normal cones of any face of the associahedron~$\Asso$. Nevertheless, we first concentrate on vertices and describe maximal normal cones using triangulations and their spines. We discuss the more general setting in Section~\ref{subsec:normalFans} where we recast Propositions~\ref{prop:maximalNormalCones},~\ref{prop:maximalCambrianCones} and~\ref{prop:singletons} into Propositions~\ref{prop:normalFan},~\ref{prop:CambrianFan} and~\ref{prop:commonFaces}. 

For each vertex~$\b{v}$ of a given polytope~$P \subset \R^n$, the \defn{cone~$C(\b{v})$ of~$\b{v}$} and the \defn{normal cone~$C\polar(\b{v})$ of~$\b{v}$} are the polyhedral cones
\[
C(\b{v}) \eqdef \cone \bigset{\b{v'}-\b{v}}{\b{v'} \text{ vertex of } P}
\quad\text{and}\quad
C\polar(\b{v}) \eqdef \cone \bigset{\b{u} \in \R^n}{\dotprod{\b{u}}{\b{v}} = \max\nolimits_{\b{x} \in P} \, \dotprod{\b{u}}{\b{x}}}.
\]
As the notation suggests, these two cones are polar to each other. Note that we consider the normal cones of the polytopes~$\Asso$ and~$\Perm$ embedded in their affine span~$H \eqdef H^=([n+1])$. From Lemma~\ref{lem:flip}, we obtain a description of both cones~$C(\b{v})$ and~$C\polar(\b{v})$ for the associahedron~$\Asso$.

\begin{proposition}[Lemma~\ref{lem:flip} and~\protect{\cite[Prop.~3.5]{PostnikovReinerWilliams}}]
\label{prop:maximalNormalCones}
For any triangulation~$T$ of~$\ver{P}$, the cone~$C(T)$ of the vertex~$\b{x}(T)$ of the associahedron~$\Asso$ is the \defn{incidence cone} of the spine~$\spine{T}$ while the normal cone~$C\polar(T)$ of~$\b{x}(T)$ is the \defn{braid cone} of the spine~$\spine{T}$, that is,
\[
C(T) = \cone \set{e_i-e_j}{\nod{i}\nod{j} \in \spine{T}}
\qquad\text{and}\qquad
C\polar(T) = \bigcap_{\nod{i}\nod{j} \in \spine{T}} \set{\b{u} \in H}{u_i \le u_j},
\]
where $\nod{i}\nod{j} \in \spine{T}$ means that~$\nod{i}\nod{j}$ is an oriented arc of~$\spine{T}$.
\end{proposition}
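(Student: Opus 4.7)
The plan is to combine simplicity of $\Asso$ with the explicit edge description from Lemma~\ref{lem:flip} to pin down $C(T)$, and then deduce $C\polar(T)$ by polarity using the standard dictionary recalled in~\cite[Prop.~3.5]{PostnikovReinerWilliams}. I would prove the incidence cone formula first, then obtain the braid cone formula by dualizing.

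For $C(T)$, I would begin by noting that $\Asso$ is an $n$-dimensional simple polytope (Theorem~\ref{theo:associahedron}), so the cone at any vertex is generated by the primitive edge vectors issuing from that vertex. The edges of $\Asso$ at $\b{x}(T)$ are in bijection with the $n$ flips of the $n$ internal diagonals of~$T$, which in turn correspond bijectively to the $n$ internal arcs of the spine~$\spine{T}$. Now Lemma~\ref{lem:flip} does exactly the bookkeeping we need: the flip of a diagonal $\delta \in T$ whose dual arc $\delta^*$ is oriented from $\nod{i}$ to $\nod{j}$ produces a neighbour $\widetilde T$ with $\b{x}(\widetilde T) - \b{x}(T)$ a positive multiple of $e_i - e_j$. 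Running over all arcs of $\spine{T}$, one gets precisely the identity
\[
C(T) \; = \; \cone \set{e_i-e_j}{\nod{i}\nod{j} \in \spine{T}}.
\]

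For $C\polar(T)$, I would invoke polarity: the normal cone of a vertex is the polar (inside the affine span~$H$ of $\Asso$) of its cone. Since $\spine{T}$ is a directed tree, it is in particular acyclic, and \cite[Prop.~3.5]{PostnikovReinerWilliams} gives that the polar in~$H$ of the incidence cone of any directed acyclic graph~$G$ on~$[n+1]$ equals the braid cone
\[
\bigcap_{i \to j \text{ in } G} \set{\b{u} \in H}{u_i \le u_j}.
\]
Applied to $G = \spine{T}$ (the internal part of the spine, reading each arc $\nod{i}\nod{j}$ as an arc $i \to j$ on label sets), this yields the claimed description of $C\polar(T)$.

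The one step that needs mild care is the bookkeeping of the ambient space: the generators $e_i - e_j$ automatically lie in the direction of~$H$ (they sum to zero), and the polarity must be computed relative to $H$ rather than $\R^{n+1}$, so that the orthogonality to $\one$ is quotiented out on both sides. Beyond this, everything else is a direct assembly of Lemma~\ref{lem:flip}, simplicity of~$\Asso$, and the preposet/braid-cone dictionary of~\cite{PostnikovReinerWilliams}, and there is no combinatorial obstacle of substance.
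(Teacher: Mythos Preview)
Your proposal is correct and follows exactly the approach the paper indicates: the proposition is stated as a direct consequence of Lemma~\ref{lem:flip} together with \cite[Prop.~3.5]{PostnikovReinerWilliams}, and the paper gives no further proof beyond this citation. Your write-up simply spells out the assembly of these two ingredients (via simplicity of~$\Asso$ from Theorem~\ref{theo:associahedron}), which is precisely what the paper leaves implicit.
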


We now recall basic properties of the permutahedron. The normal fan of the permutahedron~$\Perm \subset \R^{n+1}$ is the fan defined by the \defn{braid arrangement} which is formed by the hyperplanes~$x_i=x_j$, for~$i \ne j \in [n+1]$. For each permutation~$\sigma$ of~$[n+1]$, the maximal cone~$C\polar(\sigma) \eqdef \set{\b{x} \in H}{x_{\sigma^{-1}(i)} \le x_{\sigma^{-1}(i+1)} \text{ for } i \in [n]}$ of the braid arrangement is the normal cone of the vertex~$(\sigma(1),\dots,\sigma(n+1))$ of~$\Perm$.

Given a triangulation~$T$ of~$\ver{P}$, its spine~$\spine{T}$ is an acyclic graph labeled by~$[n+1]$ and its transitive closure defines a partial order~$\prec_T$ on~$[n+1]$. Remember that a \defn{linear extension} of~$\prec_T$ is a linear order~$\prec_L$ such that~$i \prec_T j$ implies~$i \prec_L j$. Equivalently, a linear extension of~$\prec_T$ can be seen as a permutation~$\sigma$ of~$[n+1]$ such that~$i \prec_T j$ implies~$\sigma(i) < \sigma(j)$. Note that the linear order~$\prec_L$ then coincides with the one-line notation of the inverse of~$\sigma$. These linear extensions precisely determine the normal cones~$C\polar(\sigma)$ of~$\Perm$ which are contained in the normal cone~$C\polar(T)$ of~$\Asso$~as~follows.

\begin{proposition}[\protect{\cite[Cor.~3.9]{PostnikovReinerWilliams}}]
\label{prop:maximalCambrianCones}
For any triangulation~$T$ of~$\ver{P}$, the normal cone~$C\polar(T)$ in~$\Asso$ is the union of the normal cones~$C\polar(\sigma)$ in~$\Perm$ of all linear extensions~$\sigma$ of the transitive closure~$\prec_T$ of~$\spine{T}$.
\end{proposition}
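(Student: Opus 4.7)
My plan is to deduce this directly from the braid cone description of $C\polar(T)$ in Proposition~\ref{prop:maximalNormalCones}, combined with the standard correspondence between linear extensions of a partial order and the chambers of the braid arrangement refining its braid cone.

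First, I would rewrite $C\polar(T) = \bigcap_{\nod{i}\nod{j} \in \spine{T}} \set{\b{u} \in H}{u_i \le u_j}$ in terms of the transitive closure $\prec_T$: because transitive closure preserves the solution set of a system of weak inequalities, we have $C\polar(T) = \set{\b{u} \in H}{u_i \le u_j \text{ whenever } i \prec_T j}$. Similarly, for any permutation $\sigma$ of $[n+1]$, the braid cone $C\polar(\sigma)$ is the set of $\b{u} \in H$ satisfying $u_{\sigma^{-1}(1)} \le u_{\sigma^{-1}(2)} \le \cdots \le u_{\sigma^{-1}(n+1)}$.

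For the inclusion $\bigcup_\sigma C\polar(\sigma) \subseteq C\polar(T)$, I would observe that if $\sigma$ is a linear extension of $\prec_T$ and $i \prec_T j$, then $\sigma^{-1}(i) < \sigma^{-1}(j)$, so any $\b{u} \in C\polar(\sigma)$ satisfies $u_i \le u_j$; thus $\b{u} \in C\polar(T)$. The reverse inclusion $C\polar(T) \subseteq \bigcup_\sigma C\polar(\sigma)$ is where the real content lies: given $\b{u} \in C\polar(T)$, I need to produce a linear extension $\sigma$ of $\prec_T$ such that the coordinates of $\b{u}$ are weakly increasing along $\sigma^{-1}(1), \dots, \sigma^{-1}(n+1)$. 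The natural idea is to sort by value of $\b{u}$ and break ties using $\prec_T$: define a strict binary relation $i \lessdot j$ if $u_i < u_j$, or $u_i = u_j$ and $i \prec_T j$. A short case check (using transitivity of $<$ on $\mathbb{R}$ and of $\prec_T$) shows that $\lessdot$ is a strict partial order extending $\prec_T$, and any linear extension $\sigma$ of $\lessdot$ is then simultaneously a linear extension of $\prec_T$ and a permutation for which $\b{u} \in C\polar(\sigma)$.

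The main subtle point is the tie-breaking step: one must verify that the compatibility hypothesis $\b{u} \in C\polar(T)$ (\ie $i \prec_T j \Rightarrow u_i \le u_j$) is exactly what is needed for $\lessdot$ to be well-defined and transitive. Modulo that verification, the statement is purely a reformulation of the general fact, recorded in~\cite[Sect.~3.4]{PostnikovReinerWilliams}, that the braid cone of a poset is partitioned (up to boundaries) into the maximal chambers indexed by its linear extensions; it then suffices to cite~\cite[Cor.~3.9]{PostnikovReinerWilliams} as the authors already do.
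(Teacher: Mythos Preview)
Your proposal is correct and matches the paper's approach: the paper does not give an independent proof of this proposition but simply cites \cite[Cor.~3.9]{PostnikovReinerWilliams}, relying on the braid cone description from Proposition~\ref{prop:maximalNormalCones} together with the general dictionary between posets and braid cones from~\cite[Sect.~3.4]{PostnikovReinerWilliams}. Your write-up in fact goes further than the paper by spelling out the tie-breaking argument for the nontrivial inclusion, which is exactly the content behind the cited result.
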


Proposition~\ref{prop:maximalCambrianCones} naturally defines a surjective map~$\surjectionPermAsso$ from the permutations of~$[n+1]$ to the triangulations of~$\ver{P}$, which sends a permutation~$\sigma$ of~$[n+1]$ to the unique triangulation~$T$ of~$\ver{P}$ such that~$C\polar(T)$ contains~$C\polar(\sigma)$. By Proposition~\ref{prop:maximalCambrianCones}, the fiber~$\surjectionPermAsso^{-1}(T)$ of a triangulation~$T$ of~$\ver{P}$ is the set of linear extensions of the spine~$\spine{T}$.

\begin{exm}{1}{continued}
\label{exm:small5}
Consider the triangulation~$T\ex$ of \fref{fig:flip}\,(left). Its fiber~$\surjectionPermAsso^{-1}(T\ex)$ contains $35$ permutations, for example~$[3,4,2,1,7,6,8,5]$, $[4,6,2,1,7,5,8,3]$, and~$[7,8,3,2,5,4,6,1]$, since their corresponding linear orders (\ie their inverse permutations) extend~$T\ex$.
\end{exm}

N.~Reading~\cite{Reading-CambrianLattices} described combinatorially the surjection~$\surjectionPermAsso$ as follows (see \fref{fig:paths}). Fix a permutation~$\sigma$ of~$[n+1]$. For~$i \in \{0,\dots,n+1\}$, define~$\pi_i(\sigma)$ to be the $x$-monotone path in~$\ver{P}$ joining the vertices~$\ver{0}$ and~$\ver{n+2}$ and passing through the vertices of the symmetric difference~$D \symdif \sigma^{-1}([i])$, with the convention that~$[0] = \varnothing$. As illustrated in \fref{fig:paths}, the sequence of paths~$\pi_0(\sigma), \pi_1(\sigma), \dots, \pi_{n+1}(\sigma)$ sweeps the polygon~$\ver{P}$, starting from the lower hull of~$\ver{P}$ and ending at the upper hull of~$\ver{P}$. The path~$\pi_i(\sigma)$ differs from the path~$\pi_{i-1}(\sigma)$ by a single vertex: namely, vertex~$\sigma^{-1}(i)$ belongs to $\pi_{i-1}(\sigma)$ but not to~$\pi_i(\sigma)$ if it is a down vertex, and to $\pi_i(\sigma)$ but not to~$\pi_{i-1}(\sigma)$ if it is an up vertex. The triangulation~$\surjectionPermAsso(\sigma)$ associated to the permutation~$\sigma$ is the union of the paths~$\pi_0(\sigma), \pi_1(\sigma), \dots, \pi_{n+1}(\sigma)$.

\begin{figure}
  \centerline{\includegraphics[scale=.6]{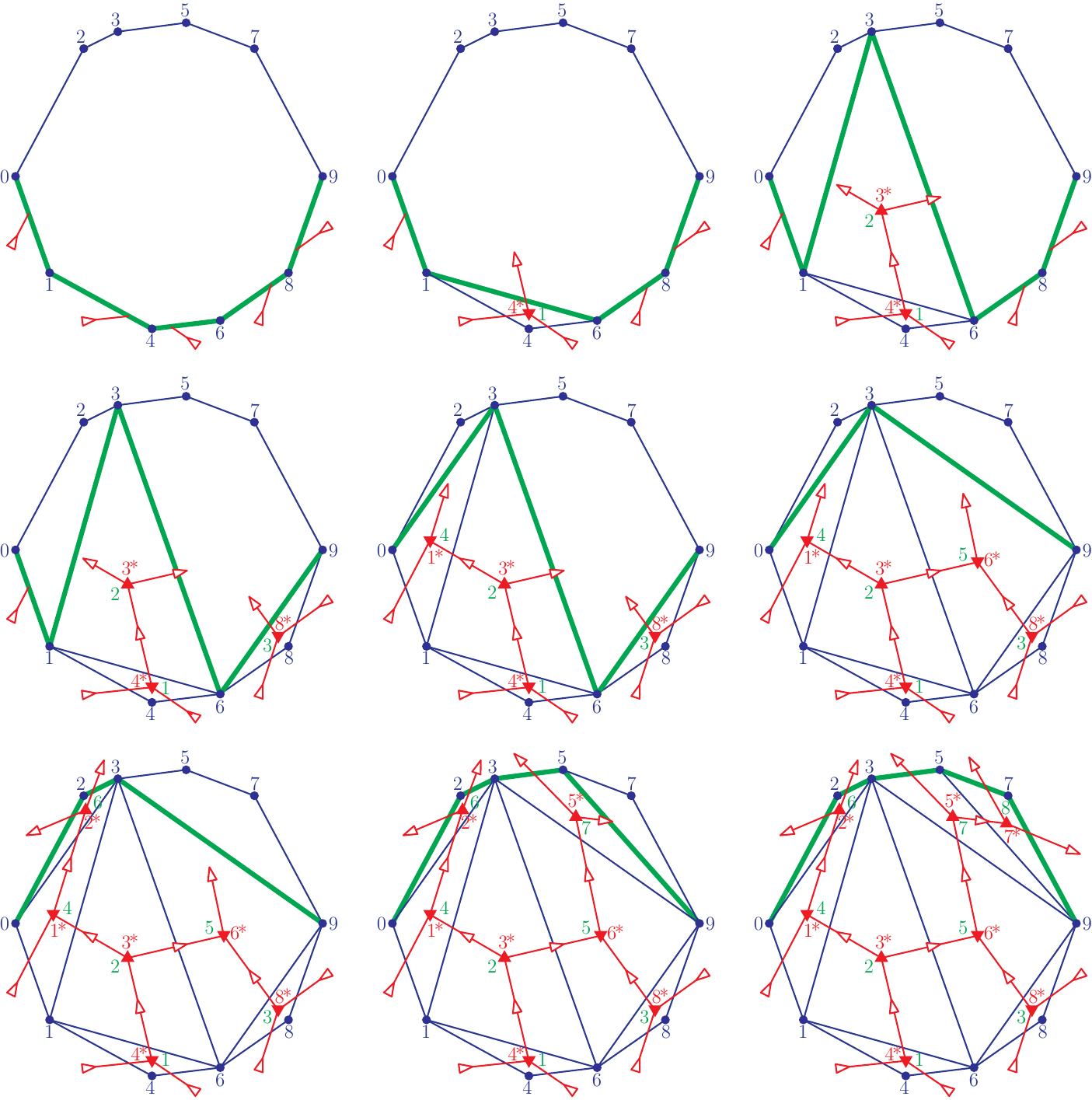}}
  \caption{The sequence of paths~$\pi_0(\sigma\ex), \pi_1(\sigma\ex), \dots, \pi_8(\sigma\ex)$ for~$\sigma\ex = [4,6,2,1,7,5,8,3]$.}
  \label{fig:paths}
\end{figure}
\begin{figure}
  \centerline{\includegraphics[scale=1]{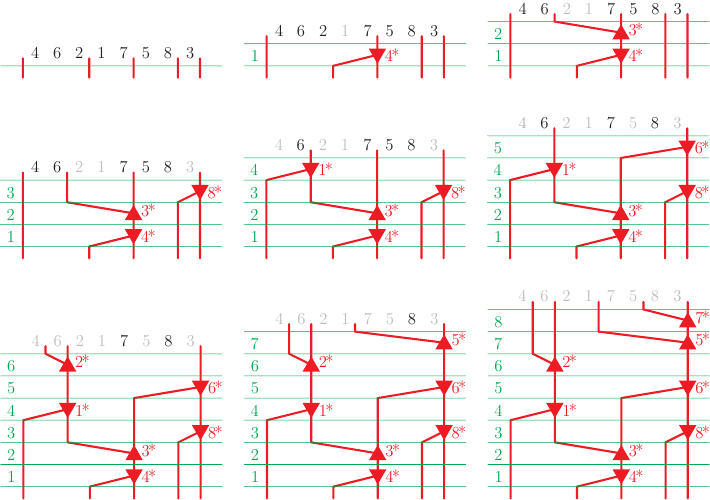}}
  \caption{The construction of the leveled spine~$\leveledBijection{\sigma\ex}$ for~$\sigma\ex = [4,6,2,1,7,5,8,3]$.}
  \label{fig:leveledSpines}
\end{figure}

This surjection can also be described directly in terms of spines (see \fref{fig:leveledSpines}). A \defn{leveled spine}~$\leveledSpine{T}$ is a spine~$\spine{T}$ whose nodes are additionally ordered by one of the linear extensions of~$\spine{T}$. In other words, the nodes of~$\leveledSpine{T}$ are ordered by levels such that the partial order forced by the arcs of~$\spine{T}$ is respected. From a permutation~$\sigma$ of~$[n+1]$, we can directly construct a leveled spine~$\leveledBijection{\sigma}$ whose underlying spine (when forgetting the levels) is the spine~$\spine{\surjectionPermAsso(\sigma)}$ of the triangulation~$\surjectionPermAsso(\sigma)$ associated to the permutation~$\sigma$. The following algorithm constructs~$\leveledBijection{\sigma}$ level by level from bottom to top. We start writing~$\sigma$ in one-line notation~$\sigma(1)\sigma(2) \dots \sigma(n+1)$, and place a strand~$\pmb{|}$ before each position~${d \in D}$ as well as after the last position. At each step~$i$, we add a new node at level~$i$ labeled by~$\sigma^{-1}(i)$ as follows:
\begin{itemize}
\item If~$\sigma^{-1}(i)$ is a down label, then the node at level~$i$ is a node~$\blacktriangledown$, with two incoming and one outgoing strands. The strand just to the left of~$i$ glides to the right until it meets the next strand at~$\blacktriangledown$ where they merge to form a new vertical strand. See \fref{fig:leveledSpines}, Steps 1, 3,~4,~and~5.
\item If~$\sigma^{-1}(i)$ is an up label, then the node at level~$i$ is a node~$\blacktriangle$, with one incoming and two outgoing strands. The first strand to the right of~$i$ splits at~$\blacktriangle$ into two strands; one remains vertical while the other glides to the left and stops just left of~$i$. See \fref{fig:leveledSpines}, Steps 2, 6, 7, and~8.
\end{itemize}
The resulting leveled tree~$\leveledBijection{\sigma}$, where the node at level $i$ is labeled by~$\sigma^{-1}(i)$, and where the arcs are oriented upwards, gives the spine~$\spine{\surjectionPermAsso(\sigma)}$ when we forget the levels. Note also that we could have described this construction from top to bottom, but we have chosen our presentation to match the construction of the triangulation~$\surjectionPermAsso(\sigma)$ described by N.~Reading in~\cite{Reading-CambrianLattices}. An alternative presentation, where the nodes of~$\leveledBijection{\sigma}$ give the table of the permutation~$\sigma$ can be found in~\cite{ChatelPilaud}.

\begin{exm}{1}{continued}
\label{exm:small6}
Consider the permutation~$\sigma\ex \eqdef [4,6,2,1,7,5,8,3]$, given by its one-line notation. The corresponding linear order is given by~$(\sigma\ex)^{-1} = [4,3,8,1,6,2,5,7]$. \fref{fig:paths} illustrates the successive steps to construct the triangulation~$\surjectionPermAsso(\sigma\ex)$, while \fref{fig:leveledSpines} shows the successive steps to obtain the leveled spine~$\leveledBijection{\sigma\ex}$. The resulting triangulation and spine are those of \fref{fig:flip}\,(left). 
\end{exm}

\begin{exm}{2}{Loday's associahedron, continued}
\label{exm:Loday7}
In the situation of Loday's polygon~$\ver{P}\loday$, the surjection~$\surjectionPermAsso$ can also be described in terms of \defn{insertion} in binary search trees~\cite[Section~12.3]{CormenLeisersonRivestStein}. Namely, for any permutation~$\sigma$ of~$[n+1]$, the spine~$\surjectionPermAsso(\sigma)$ is the last tree in the sequence of binary search trees~$\varnothing \eqdef \tau_0 \subset \tau_1 \subset \dots \subset \tau_{n+1} \defeq \surjectionPermAsso(\sigma)$, where~$\tau_i$ is obtained by insertion of~$\sigma^{-1}(n+2-i)$ in~$\tau_{i-1}$.
\end{exm}

Of particular interest are triangulations~$T$ of~$\ver{P}$ whose fiber is a singleton. The corresponding points~$\b{x}(T)$ are precisely the vertices of the associahedron~$\Asso$ that are also vertices of the permutahedron~$\Perm$. Moreover, they determine the half-spaces of the permutahedron that are needed to construct~$\Asso$: a facet-defining inequality of the permutahedron is a facet-defining inequality of~$\Asso$ if and only if its bounding hyperplane contains a point~$\b{x}(T)$ where $\surjectionPermAsso^{-1}(T)$ is a singleton. This property was used in~\cite{HohlwegLangeThomas} to generalize the construction of~$\Asso$ to generalized associahedra for any finite Coxeter group as indicated in Section~\ref{sec:existingWork}. Relevant examples of triangulations whose fiber is a singleton are the minimal and maximal triangulations~$\minTriang(\ver{P})$ and~$\maxTriang(\ver{P})$ in the slope increasing flip lattice on~$\ver{P}$ as presented in Proposition~\ref{prop:greedy}. More generally, the following equivalent statement characterizes triangulations whose fiber is a singleton. It is a slight extension of~\cite[Prop.~1.4]{HohlwegLange}, translated in terms of spines.

\begin{proposition}[\protect{\cite[Prop.~1.4]{HohlwegLange}}]
\label{prop:singletons}
For a triangulation~$T$ of~$\ver{P}$ and a permutation~$\sigma$ of~$[n+1]$, the following assertions are equivalent:
\begin{enumerate}[(i)]
\item The spine~$\spine{T}$ is a directed path (with blossoms) labeled by~$\sigma$.
\item The transitive closure~$\prec_T$ of the spine~$\spine{T}$ is the linear order defined by~$\sigma$.
\item The fiber of~$T$ with respect to the map~$\surjectionPermAsso$ is the singleton~$\surjectionPermAsso^{-1}(T) = \{\sigma\}$.
\item The vertex~$\b{x}(T)$ of~$\Asso$ coincides with the vertex~$(\sigma(1), \dots, \sigma(n+1))$ of~$\Perm$.
\item The cone~$C(T)$ of~$\Asso$ coincides with the cone~$C(\sigma)$ of~$\Perm$.
\item The normal cone~$C\polar(T)$ of~$\Asso$ coincides with the normal cone~$C\polar(\sigma)$ of~$\Perm$.
\end{enumerate}
\end{proposition}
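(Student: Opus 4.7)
The natural approach is to take the normal cone condition~(vi) as the pivot, since Proposition~\ref{prop:maximalCambrianCones} directly relates it to both the fiber condition~(iii) and the total-order condition~(ii). Indeed, the fiber $\surjectionPermAsso^{-1}(T)$ is precisely the set of linear extensions of~$\prec_T$, so it is a singleton exactly when $\prec_T$ is already a total order; meanwhile $C\polar(T) = \bigcup_{\sigma' \in \surjectionPermAsso^{-1}(T)} C\polar(\sigma')$ is a union of pairwise interior-disjoint braid cones, so it coincides with a single $C\polar(\sigma)$ if and only if the fiber is~$\{\sigma\}$. This establishes (ii)~$\Leftrightarrow$~(iii)~$\Leftrightarrow$~(vi) in one stroke, and (v)~$\Leftrightarrow$~(vi) then follows by polar duality inside~$H$ via Proposition~\ref{prop:maximalNormalCones}. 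For (i)~$\Leftrightarrow$~(ii) I would argue structurally: each internal node of $\spine{T}$ has either two outgoing or two incoming arcs, so if two same-direction arcs at some node both lead to other internal nodes, their targets (or sources) remain incomparable in~$\prec_T$; hence $\prec_T$ is total if and only if each internal node has at most one same-direction arc leading to an internal node, which forces the internal nodes to form a directed path with exactly one side blossom per node.

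The next step is to prove (i)~$\Rightarrow$~(iv) by direct computation. Under the hypothesis that $\spine{T}$ is a directed path $\nod{\sigma^{-1}(1)} \to \cdots \to \nod{\sigma^{-1}(n+1)}$ carrying a single side blossom at each internal node, cutting the arc into $\nod{\sigma^{-1}(k)}$ separates $\spine{T}$ into two subtrees containing $k$ and $n+2-k$ blossoms respectively (one path-endpoint blossom plus one side blossom per node in that half). If $\sigma^{-1}(k) \in D$, the two incoming subtrees at $\nod{\sigma^{-1}(k)}$ are the $k$-blossom ``before'' subtree and a single side blossom, so $|\chor(\sigma^{-1}(k))| = k$ and $x_{\sigma^{-1}(k)}(T) = k$. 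If $\sigma^{-1}(k) \in U$, the two outgoing subtrees are the $(n+2-k)$-blossom ``after'' subtree and a single side blossom, so $|\chor(\sigma^{-1}(k))| = n+2-k$ and $x_{\sigma^{-1}(k)}(T) = (n+2)-(n+2-k) = k$. Collecting these identities yields $\b{x}(T) = (\sigma(1), \dots, \sigma(n+1))$.

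To close the circle I would prove (iv)~$\Rightarrow$~(vi) using the inclusion $\Perm \subseteq \Asso$, which is immediate from Theorem~\ref{theo:associahedron} because the facet-defining inequalities $H^\ge(\delta)$ of $\Asso$ form a subset of the inequalities $H^\ge(J)$ defining $\Perm$. Under assumption~(iv), $\b{x}(T)$ is a vertex of both polytopes, and the standard monotonicity of normal cones under polytope inclusion gives $C\polar(T) \subseteq C\polar(\sigma)$. Combined with the decomposition of $C\polar(T)$ from Proposition~\ref{prop:maximalCambrianCones} and the disjoint-interior property of braid cones, this forces $\surjectionPermAsso^{-1}(T) = \{\sigma\}$ and hence $C\polar(T) = C\polar(\sigma)$. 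The main obstacle in the whole argument is exactly this last step: the monotonicity $C\polar(T) \subseteq C\polar(\sigma)$ is elementary, but it is the only point where the geometric relationship $\Perm \subseteq \Asso$ enters non-trivially, and without it the coordinate condition~(iv) cannot be linked back to the combinatorial conditions.
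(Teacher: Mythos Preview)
The paper does not actually supply a proof of this proposition: at the start of Section~\ref{sec:normalFan} it announces that ``essentially all statements in this section are combinations of Lemma~\ref{lem:flip} with results of the dictionary between preposets and braid cones'' from~\cite{PostnikovReinerWilliams}, and the proposition is then stated with attribution to~\cite[Prop.~1.4]{HohlwegLange} but without argument. Your write-up therefore fills in what the paper deliberately leaves implicit, and it does so precisely along the lines the paper suggests: your pivot is Proposition~\ref{prop:maximalCambrianCones} (the union-of-braid-cones description of $C\polar(T)$), together with Proposition~\ref{prop:maximalNormalCones} for the polar duality, both of which are the paper's declared toolkit for this section.

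Your argument is correct. A couple of comments on points you flagged as delicate. For (i)~$\Leftrightarrow$~(ii), the reasoning is sound; you might add one sentence observing that once each internal node has exactly one blossom among its branching pair, the induced orientations along the resulting path cannot alternate (a node with two internal same-direction arcs is exactly what you have excluded), so the path is automatically consistently directed. For (iv)~$\Rightarrow$~(vi), your use of $\Perm \subseteq \Asso$ is legitimate and the direction of the normal-cone inclusion is the right one: from $\Perm \subseteq \Asso$ and a common vertex one gets $C\polar_{\Asso}(T) \subseteq C\polar_{\Perm}(\sigma)$, after which Proposition~\ref{prop:maximalCambrianCones} and interior-disjointness of maximal braid cones force equality. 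This last step is indeed the one place where the geometry of the inclusion $\Perm \subseteq \Asso$ is genuinely used, and your identification of it as the crux is accurate.
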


\medskip
We now consider the parallelepiped~$\Para$ defined as the intersection of the hyperplane $H \eqdef H^=([n+1])$ with the half-spaces~$H^\ge([i])$ and~$H^\ge([n+1] \ssm [i])$ for~$i \in [n]$. In other words, $\Para$ is defined by $H \eqdef H^=([n+1])$ and the facet-defining inequalities of~$\Perm$ whose bounding hyperplane contains~$(1, 2, \dots, n+1)$ or~$(n+1, n, \dots, 1)$. We label the vertices of~$\Para$ by binary words of length~$n$ on the alphabet~$\{\pm 1\}$ such that a vertex labeled by~$w \eqdef w_1 \cdots w_n \in \{\pm 1\}^n$ belongs to~$H^\ge([i])$ if~$w_i = 1$ and to~$H^\ge([n+1] \ssm [i])$ if~$w_i = -1$. The normal cone of the vertex~$w$ of~$\Para$ is the polyhedral cone~$C\polar(w) \eqdef \set{\b{x} \in H}{w_i(x_i - x_{i+1}) \le 0 \text{ for } i \in [n] }$. The following statement characterizes which maximal normal cones of~$\Asso$ are contained in~$C\polar(w)$.

\begin{proposition}
\label{prop:parallelepiped}
In any spine~$\spine{T}$, the nodes~$\nod{i}$ and~$\nod{(i+1)}$ are always comparable. For any binary word~$w \in \{\pm 1\}^n$, the normal cone~$C\polar(w)$ in~$\Para$ is the union of the normal cones~$C\polar(T)$ in~$\Asso$ of all triangulations~$T$ of~$\ver{P}$ with the following property: for all~$i \in [n]$, the node~$\nod{i}$ is below~$\nod{(i+1)}$ in~$\spine{T}$ if~$w_i = 1$ and above~$\nod{(i+1)}$ in~$\spine{T}$ if~$w_i = -1$.
\end{proposition}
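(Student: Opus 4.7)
The plan is to prove the two assertions in sequence, as the first is a purely combinatorial fact about spines that feeds directly into the normal-cone description underlying the second. For the comparability of~$\nod{i}$ and~$\nod{(i+1)}$, I would consider the unique undirected path~$P$ connecting them in the tree~$\spine{T}$ and argue that it must in fact be a directed path. Assume otherwise: then the orientation reverses at some interior node~$\nod{k}$ of~$P$, meaning the two arcs of~$P$ incident to~$\nod{k}$ are either both incoming (so~$\nod{k}$ is a down node with~$k \in D$) or both outgoing (so~$\nod{k}$ is an up node with~$k \in U$). In either configuration, Proposition~\ref{prop:alternativeSpines} guarantees that the two subtrees of~$\spine{T} \ssm \nod{k}$ reached from~$\nod{k}$ through these two arcs carry their labels in the disjoint intervals~$[k-1]$ and~$[k+1, n+1]$. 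Since the path~$P$ puts~$\nod{i}$ in one of these subtrees and~$\nod{(i+1)}$ in the other, the consecutive integers~$i$ and~$i+1$ would have to lie in opposite intervals separated by~$k$, which is impossible.

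For the second assertion, Part~1 lets me attach to each triangulation~$T$ a word $w(T) \in \{\pm 1\}^n$ defined by $w(T)_i = +1$ when~$\nod{i}$ lies below~$\nod{(i+1)}$ in~$\spine{T}$ and $w(T)_i = -1$ otherwise. By Proposition~\ref{prop:maximalNormalCones}, the cone~$C\polar(T)$ is the braid cone of the partial order~$\prec_T$ generated by the arcs of~$\spine{T}$, so by transitivity the inequality $u_i \le u_{i+1}$ (when $w(T)_i = +1$) or $u_i \ge u_{i+1}$ (when $w(T)_i = -1$) holds throughout~$C\polar(T)$. This immediately gives $C\polar(T) \subseteq C\polar(w(T))$, establishing one direction of the claimed equality. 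For the converse I would invoke a standard tiling argument: the maximal cones~$\{C\polar(T)\}_T$ cover~$H$ since they form the normal fan of the associahedron~$\Asso$, the maximal cones~$\{C\polar(w)\}_w$ cover~$H$ as the normal fan of the parallelepiped~$\Para$, and distinct~$C\polar(w)$ and~$C\polar(w')$ share only lower-dimensional boundaries; every full-dimensional~$C\polar(T)$ must therefore sit inside exactly one~$C\polar(w)$, namely~$C\polar(w(T))$, and taking closures yields $C\polar(w) = \bigcup_{T:\, w(T) = w} C\polar(T)$.

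I expect the main obstacle to be the combinatorial rigidity at the core of Part~1: pinning down that a direction-reversing interior node~$\nod{k}$ of~$P$ would force the consecutive labels~$i$ and~$i+1$ into subtrees of~$\spine{T} \ssm \nod{k}$ whose label sets are separated by~$k$ in~$[n+1]$, yielding the arithmetic impossibility. Everything downstream --- reading the comparability off the hyperplane~$u_i = u_{i+1}$, defining~$w(T)$, and the final normal-fan tiling argument --- should amount to bookkeeping once this rigidity is established.
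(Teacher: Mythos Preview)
Your proposal is correct. The paper does not include a proof of this proposition: it states at the outset of Section~\ref{sec:normalFan} that most results there are combinations of Lemma~\ref{lem:flip} with the preposet/braid-cone dictionary of~\cite[Sect.~3.4]{PostnikovReinerWilliams}, and omits the arguments. Your write-up is entirely in that spirit.

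A couple of minor remarks. For Part~1, your use of Proposition~\ref{prop:alternativeSpines} is clean; an equivalent route already implicit in the paper is via Lemma~\ref{lem:crucial} applied to any diagonal~$\delta$ of~$T$ lying on the path between~$\nod{i}$ and~$\nod{(i+1)}$, but your formulation is arguably more direct. For Part~2, the tiling argument is exactly right and parallels the paper's Proposition~\ref{prop:maximalCambrianCones} one level up (from~$\Asso$ to~$\Para$ instead of from~$\Perm$ to~$\Asso$); you could alternatively deduce it from Proposition~\ref{prop:maximalCambrianCones} together with the elementary fact that the normal fan of~$\Perm$ refines that of~$\Para$, but there is no real gain.
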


Proposition~\ref{prop:parallelepiped} naturally defines a surjective map~$\surjectionAssoPara$ from the triangulations of~$\ver{P}$ to the binary words of~$\{\pm 1\}^n$, which sends a triangulation~$T$ of~$\ver{P}$ to the unique word~$w \in \{\pm 1\}$ such that~$C\polar(w)$ contains~$C\polar(T)$.

\begin{exm}{2}{Loday's associahedron, continued}
\label{exm:Loday8}
J.-L.~Loday describes in~\cite[Sect.~2.7]{Loday} an equivalent surjection related to his realization. For a triangulation~$T$ of~$\ver{P}\loday$, the image~$\surjectionAssoPara(T)$ is called the \defn{canopy} of the binary tree~$\spine{T}$. Alternative equivalent definitions are possible, see~\cite{Viennot}. The reader is invited to work out similar definitions for a general polygon~$\ver{P}$.
\end{exm}

Note that composing~$\surjectionPermAsso$ with $\surjectionAssoPara$ gives the \defn{recoil} map on permutations: $\surjectionAssoPara(\surjectionPermAsso(\sigma)) = w_1 \cdots w_n$ where~$w_i = 1$ if~$\sigma_i < \sigma_{i+1}$ and $w_i = -1$ otherwise.


\subsection{Normal fans}
\label{subsec:normalFans}

We now extend the results of the previous section to completely describe the normal fan of~$\Asso$. In particular, Propositions~\ref{prop:normalFan},~\ref{prop:CambrianFan} and~\ref{prop:commonFaces} extend Propositions~\ref{prop:maximalNormalCones},~\ref{prop:maximalCambrianCones} and~\ref{prop:singletons} from facets to any $k$-dimensional face.  In other words, we describe the geometry of the corresponding Cambrian fan of type~$A$~\cite{ReadingSpeyer}, see Section~\ref{sec:existingWork}. The \defn{normal cone}~$C\polar(F)$ of a $k$-dimensional face~$F$ of an $n$-dimensional polytope~$P \subset \R^n$ is defined~as
\[
C\polar(F) \eqdef \cone \bigset{\b{u} \in \R^n}{\dotprod{\b{u}}{\b{v}} = \max\nolimits_{\b{x} \in P} \, \dotprod{\b{u}}{\b{x}}, \text{ for all } \b{v} \in F}
\]
and has dimension~${n-k}$. The \defn{normal fan} of~$P$ is the complete fan formed by the normal cones of all faces of~$P$. To describe the normal cones of all faces of~$\Perm$ and of~$\Asso$, we need to introduce the following notions. For a more detailed presentation, we refer to the dictionary between preposets and braid cones  presented in~\cite[Sect.~3.4]{PostnikovReinerWilliams}. 

A \defn{preposet} on~$[n+1]$ is a binary relation~$R \subseteq [n+1] \times [n+1]$ which is reflexive and transitive. Hence, any equivalence relation is a symmetric preposet and any poset is an antisymmetric preposet.
Any preposet~$R$ can in fact be decomposed into an equivalence relation~${\equiv_R} \eqdef \set{(i,j) \in R}{(j,i) \in R}$, together with a poset structure~${\prec_R} \eqdef R /{\equiv_R}$ on the equivalence classes of~$\equiv_R$.
Consequently, there is a one-to-one correspondence between preposets on~$[n+1]$ and transitive-free acyclic oriented graphs on subsets of~$[n+1]$ whose vertex set partitions~$[n+1]$: a preposet~$R$ corresponds to the Hasse diagram of the poset~$\prec_R$ on the equivalence classes of~$\equiv_R$, and conversely, an acyclic oriented graph whose vertex set partitions~$[n+1]$ corresponds to its transitive closure.
We define the \defn{normal cone of a preposet}~$R$ on~$[n+1]$ as the polyhedral cone
\[
C\polar(R) \eqdef \bigcap_{(i,j) \in R} \set{\b{u} \in H}{u_i \le u_j}.
\]

The preposets and their normal cones are convenient to describe the normal fan of the permutahedron~$\Perm$. The $k$-dimensional faces of~$\Perm$ correspond to the following equivalent combinatorial objects:
\begin{itemize}
\item the \defn{surjections} from~$[n+1]$ to~$[n+1-k]$,
\item the \defn{ordered partitions} of~$[n+1]$ with $n+1-k$ parts,
\item the \defn{linear preposets} on~$[n+1]$ of rank~$n+1-k$, \ie the preposets~$L$ on~$[n+1]$ whose associated poset~$\prec_L$ is a linear order on the $n+1-k$ equivalence classes of~$\equiv_L$.
\end{itemize}
We pass from surjections to ordered partitions by inversion: the fibers of a surjection from~$[n+1]$ to~$[n+1-k]$ define an ordered partition of~$[n+1]$ with $n+1-k$ parts, and reciprocally the positions of the elements of~$[n+1]$ in an ordered partition of~$[n+1]$ with $n+1-k$ parts define a surjection from~$[n+1]$ to~$[n+1-k]$. For example, the surjection $[2,2,1,1,2,1,2,1]$ (given in one-line notation) corresponds to the ordered partition~$(\{3,4,6,8\}, \{1,2,5,7\})$. In turn, ordered partitions and linear preposets are clearly equivalent.

To illustrate these descriptions of the faces of the permutahedron, observe that the vertices of~$\Perm$ correspond to the $(n+1)!$ permutations of~$[n+1]$, while the facets of~$\Perm$ correspond to the $2^{n+1}-2$ proper and non-empty subsets of~$[n+1]$. We denote by~$\face(L)$ the face of~$\Perm$ that corresponds to a linear preposet~$L$. The normal cone of~$\face(L)$ is precisely the normal cone~$C\polar(L)$.

The following two statements generalize Propositions~\ref{prop:maximalNormalCones} and \ref{prop:maximalCambrianCones} from vertices to all faces of~$\Asso$. In particular, they provide a complete description of the normal fan of~$\Asso$ and of its relation to the braid arrangement. Let~$C\polar(W)$ denote the normal cone of the face~$\face(W)$ of~$\Asso$ that corresponds to a dissection~$W$ of~$\ver{P}$ as described in Lemma~\ref{lem:faceDissection}.

\begin{proposition}[\protect{\cite[Prop.~3.5]{PostnikovReinerWilliams}}]
\label{prop:normalFan}
For any dissection~$W$ of~$\ver{P}$, the normal cone~$C\polar(W)$ is the normal cone of the preposet of the spine~$\spine{W}$.
\end{proposition}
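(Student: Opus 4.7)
The plan is to compute $C\polar(W)$ as the intersection of the normal cones of the vertices of the face $\face(W)$, translate each of these via Proposition~\ref{prop:maximalNormalCones} into a braid cone of a preposet, and then identify the resulting preposet with that of $\spine{W}$. By Lemma~\ref{lem:faceDissection}, the vertices of $\face(W)$ are precisely the points~$\b{x}(T)$ for all triangulations $T$ refining $W$, hence
\[
C\polar(W) \; = \bigcap_{T \text{ refines } W} \! C\polar(T) \; = \bigcap_{T \text{ refines } W} \; \bigcap_{\nod{i}\nod{j} \in \spine{T}} \set{\b{u} \in H}{u_i \le u_j},
\]
which is the braid cone $C\polar(R')$ of the preposet $R'$ on $[n+1]$ generated by the arcs of all spines~$\spine{T}$, as $T$ ranges over the refinements of $W$. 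Let $R$ denote the preposet of $\spine{W}$: two labels are equivalent in $R$ iff they label the same cell node of $\spine{W}$, and $i \prec_R j$ iff there is a directed path in $\spine{W}$ from the node labeled by $i$ to the node labeled by $j$. It suffices to prove that $R$ and $R'$ have the same transitive closure.

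For the inclusion $R' \subseteq R$, I would apply Lemma~\ref{lem:refinement}: every refinement $T$ of $W$ has a spine~$\spine{T}$ obtained from $\spine{W}$ by replacing each cell node~$\nod{\ver{C}}$ by the spine of a triangulation of the cell~$\ver{C}$. Consequently, each arc $\nod{i}\nod{j}$ of $\spine{T}$ is of exactly one of two types: either it arises from a diagonal of $T \ssm W$ internal to some cell $\ver{C}$, in which case both $i$ and $j$ are labels of $\ver{C}$ and hence equivalent in $R$; or it corresponds to a diagonal of~$W$, in which case $i$ and $j$ belong to adjacent cells $\ver{C}$, $\ver{C'}$ of $W$ with the arc of $\spine{W}$ oriented from $\nod{\ver{C}}$ to~$\nod{\ver{C'}}$, so $i \prec_R j$.

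The reverse inclusion $R \subseteq R'$ is the main obstacle. It splits into two assertions. First, for any two labels $i, j$ of a common cell $\ver{C}$ of $W$, one must exhibit refinements $T, T'$ of $W$ such that $\spine{T}$ contains a directed path from $\nod{i}$ to $\nod{j}$ and $\spine{T'}$ contains a directed path from $\nod{j}$ to $\nod{i}$; only then do $i$ and $j$ become equivalent in the transitive closure of $R'$. I would produce such triangulations by fixing an arbitrary triangulation of each cell of $W$ other than $\ver{C}$ and, within $\ver{C}$, using Proposition~\ref{prop:greedy} applied to $\ver{C}$ itself to exhibit triangulations whose spine is a directed path with any prescribed pair of labels in either relative order (the minimal triangulation $\minTriang(\ver{C})$ and maximal triangulation $\maxTriang(\ver{C})$ of $\ver{C}$ being the extreme examples). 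Second, for two labels in distinct cells connected by a directed path in $\spine{W}$, one combines the within-cell equivalences just obtained with the arcs of $\spine{W}$, which are present as arcs of every~$\spine{T}$ refining $W$ by Lemma~\ref{lem:refinement}\,(ii), and chains them through the intermediate cells. The delicate point is the first assertion: producing, for each pair of labels in a cell $\ver{C}$, refinements of $W$ realizing both orientations between them. Once this is in hand, the rest of the argument is a direct concatenation of arcs across the spine~$\spine{W}$.
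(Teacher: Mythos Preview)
Your argument is correct. The paper does not actually supply a proof of Proposition~\ref{prop:normalFan}: it is stated with a citation to~\cite[Prop.~3.5]{PostnikovReinerWilliams}, and the surrounding text explains that the results of Section~\ref{sec:normalFan} are obtained by combining Lemma~\ref{lem:flip} with the preposet/braid-cone dictionary of~\cite[Sect.~3.4]{PostnikovReinerWilliams}. So there is no detailed in-paper proof to compare against; your proposal is a self-contained argument built entirely from the paper's own tools (Lemma~\ref{lem:faceDissection}, Proposition~\ref{prop:maximalNormalCones}, Lemma~\ref{lem:refinement}, Proposition~\ref{prop:greedy}), which is more than the paper itself provides here.

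Two minor remarks on presentation. First, since preposets are by definition transitive, once you have defined $R'$ as the preposet \emph{generated} by the arcs of all refining spines, the goal is simply $R=R'$; the phrase ``same transitive closure'' is slightly redundant. Second, the ``delicate point'' you flag is in fact fully handled by your own suggestion: for any two labels $i<j$ of a cell~$\ver{C}$, the spine of~$\minTriang(\ver{C})$ is a label-increasing directed path and that of~$\maxTriang(\ver{C})$ a label-decreasing one (Proposition~\ref{prop:greedy}), so both orders between $i$ and $j$ are realized; no further triangulations are needed.
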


An \defn{extension} of a preposet~$R$ on~$[n+1]$ is a preposet~$\widetilde R$ on~$[n+1]$ containing~$R$, that is, all relations of~$R$ are also relations of~$\widetilde R$. A \defn{linear extension} of~$R$ is an extension~$L$ of~$R$ which is a linear preposet, that is, its associated poset~$\prec_L$ is a linear order on the equivalence classes of~$\equiv_L$.

\begin{proposition}[\protect{\cite[Prop.~3.5]{PostnikovReinerWilliams}}]
\label{prop:CambrianFan}
For any dissection~$W$ of~$\ver{P}$, the normal cone~$C\polar(W)$ in~$\Asso$ is the union of the normal cones~$C\polar(L)$ in~$\Perm$ of the linear extensions~$L$ of the transitive closure~$\prec_W$ of~$\spine{W}$.
\end{proposition}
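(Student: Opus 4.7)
The plan is to deduce Proposition~\ref{prop:CambrianFan} directly from Proposition~\ref{prop:normalFan} via the general dictionary between preposets and braid cones. By Proposition~\ref{prop:normalFan}, the normal cone of $\face(W)$ is
\[
C\polar(W) = \bigcap_{(i,j) \in \, \prec_W} \set{\b{u} \in H}{u_i \le u_j},
\]
so the task reduces to showing that this cone decomposes as the union, over all linear extensions $L$ of $\prec_W$, of the cones
\[
C\polar(L) = \bigcap_{(i,j) \in L} \set{\b{u} \in H}{u_i \le u_j}.
\]

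For the easy inclusion, I would observe that any linear extension $L$ of $\prec_W$ satisfies $L \supseteq \prec_W$ by definition, hence $C\polar(L) \subseteq C\polar(W)$. Taking the union over all linear extensions gives one containment immediately.

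For the reverse inclusion, I would show that every point $\b{u} \in C\polar(W)$ lies in $C\polar(L)$ for some linear extension $L$ of $\prec_W$. Given such a $\b{u} \in H$, I define the binary relation $L(\b{u})$ on $[n+1]$ by $i \, L(\b{u}) \, j$ if and only if $u_i \le u_j$. Since the total order of $\R$ restricts to a total preorder on $\{u_1, \dots, u_{n+1}\}$, the relation $L(\b{u})$ is a linear preposet on $[n+1]$: its equivalence classes are the level sets $\{i \in [n+1] \mid u_i = c\}$ for $c \in \R$, and the associated poset $\prec_{L(\b{u})}$ is the linear order on these classes induced by the values of $\b{u}$. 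From the assumption $\b{u} \in C\polar(W)$, we have $u_i \le u_j$ for every $(i,j) \in \, \prec_W$, so $\prec_W \subseteq L(\b{u})$, i.e., $L(\b{u})$ is a linear extension of $\prec_W$. Finally, $\b{u} \in C\polar(L(\b{u}))$ by construction of $L(\b{u})$, which concludes the reverse inclusion.

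There is no real obstacle here: the statement is essentially a reformulation of the standard fact that the closed cone cut out by a family of weak inequalities $u_i \le u_j$ is stratified by the braid arrangement into the normal cones of its linear extensions, as already invoked via~\cite[Sect.~3.4]{PostnikovReinerWilliams}. The only subtlety worth flagging is that the linear extensions in question are extensions in the preposet sense (allowing new equivalences $i \equiv_L j$ that were not present in $\prec_W$), not merely in the poset sense; this matters precisely when $\b{u}$ lies on the boundary of some chamber of the braid arrangement, and is exactly what the definition of linear extension given before the proposition accommodates.
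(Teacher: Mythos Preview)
Your argument is correct. Note, however, that the paper does not supply its own proof of this proposition: it is stated with attribution to~\cite[Prop.~3.5]{PostnikovReinerWilliams}, and the surrounding text explicitly says that the statements of this section are direct consequences of the dictionary between preposets and braid cones from~\cite[Sect.~3.4]{PostnikovReinerWilliams} combined with Lemma~\ref{lem:flip}. Your proof is precisely an unpacking of that dictionary applied to the preposet~$\prec_W$ via Proposition~\ref{prop:normalFan}, so it is the same approach the paper is pointing to, just written out in full rather than cited.
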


In other words, the normal cone of any face of~$\Asso$ is a union of normal cones of faces of~$\Perm$. We say that the normal fan of the permutahedron~$\Perm$ \defn{refines} the normal fan of the associahedron~$\Asso$.

\begin{exm}{1}{continued}
For the dissection~$\widetilde W\ex$ of \fref{fig:dissections}\,(right), the normal cone~$C\polar(\widetilde W\ex)$ is the union of the braid cones of the linear extensions of~$\widetilde W\ex$ given by
\begin{gather*}
(\{1,2,3,4,5,6,7,8\}),
\qquad
(\{3,4,5,6,7,8\}, \{1,2\}),
\qquad
(\{1,2,3,4,6,8\}, \{5,7\}), \\
(\{3,4,6,8\}, \{1,2,5,7\}),
\qquad
(\{3,4,6,8\}, \{1,2\}, \{5,7\}),
\qquad
(\{3,4,6,8\}, \{5,7\}, \{1,2\}).
\end{gather*}
The first cone is a point, the next three cones are rays, and the last two cones are $2$-dimensional.
\end{exm}

Any $k$-dimensional face~$F$ of~$\Perm$ corresponds to a surjection~$\tau : [n+1] \to [n+1-k]$. We now present a combinatorial description of the minimal normal cone of~$\Asso$ that contains the normal cone of~$F$. As in the previous Section~\ref{subsec:maximalNormalCones} where we considered the situation~$k=0$, we can use dissections of~$\ver{P}$ as well as spines:
\begin{enumerate}[(i)]
\item Starting from the surjection~$\tau$, we construct a dissection~$\surjectionPermAsso(\tau)$ of~$\ver{P}$ into~$n+1-k$ cells. For~$i \in \{0, \dots, n+1-k\}$, define $\pi_i(\tau)$ to be the $x$-monotone path in~$\ver{P}$ joining the vertices~$\ver{0}$ and~$\ver{n+2}$ and passing through the vertices of the symmetric difference~$D \symdif \tau^{-1}([j])$. The dissection~$\surjectionPermAsso(\tau)$ is the union of the paths~$\pi_0(\tau), \dots, \pi_{n+1-k}(\tau)$. See \fref{fig:pathsDissection}.
\item Starting from the surjection~$\tau : [n+1] \to [n+1-k]$, we construct a leveled spine~$\leveledBijection{\tau}$. We write the surjection~$\tau$ in one-line notation and proceed level by level from bottom to top almost as in the previous section. The only difference is that all positions with the same image~$i$ are treated simultaneously at level~$i$. See \fref{fig:leveledSpinesDissection}.
\end{enumerate}

\begin{exm}{1}{continued}
\label{exm:small7}
Consider the surjection~$\tau\ex \eqdef [2,2,1,1,2,1,2,1]$ (given in one-line notation). \fref{fig:pathsDissection} illustrates the successive steps to construct the dissection~$\surjectionPermAsso(\rho\ex)$, while \fref{fig:leveledSpinesDissection} shows the successive steps to obtain the leveled spine~$\leveledBijection{\tau\ex}$. The resulting dissection and spine are~~$\widetilde W\ex$ and~$\spine{\widetilde {W\ex}}$ of \fref{fig:dissections}\,(right).
\end{exm}

\begin{figure}
  \centerline{\includegraphics[scale=.6]{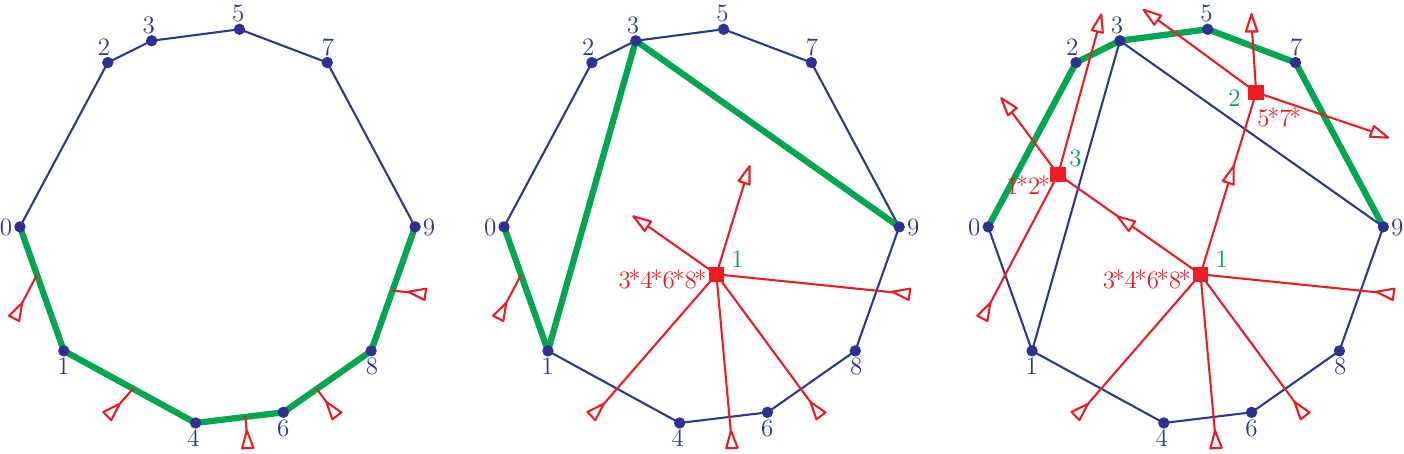}}
  \caption{The sequence of paths~$\pi_0(\tau\ex), \pi_1(\tau\ex), \pi_2(\tau\ex)$ for~$\tau\ex = [2,2,1,1,2,1,2,1]$.}
  \label{fig:pathsDissection}
\end{figure}
\begin{figure}
  \centerline{\includegraphics[scale=1]{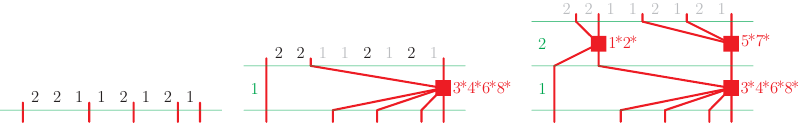}}
  \caption{The construction of the leveled spine~$\leveledBijection{\tau\ex}$ for~$\tau\ex = [2,2,1,1,2,1,2,1]$.}
  \label{fig:leveledSpinesDissection}
\end{figure}

The following statement extends Proposition~\ref{prop:singletons}.

\begin{proposition}
\label{prop:commonFaces}
For a dissection~$W$ of~$\ver{P}$ and a linear preposet~$L$ on~$[n+1]$, the following assertions are equivalent:
\begin{enumerate}[(i)]
\item The spine~$\spine{W}$ is a directed path (with blossoms) labeled by~$L$.
\item The transitive closure of the spine~$\spine{W}$ is~$L$.
\item The affine span of the face~$\face(W)$ of~$\Asso$ coincides with that of the face~$\face(L)$ of~$\Perm$.
\item The normal cone~$C\polar(W)$ of~$\Asso$ coincides with the normal cone~$C\polar(L)$ of~$\Perm$.
\end{enumerate}
\end{proposition}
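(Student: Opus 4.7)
The plan is to establish (i)~$\Leftrightarrow$~(ii), (ii)~$\Leftrightarrow$~(iv), (i)~$\Rightarrow$~(iii), and (iii)~$\Rightarrow$~(i), which together yield the four-way equivalence. The implication (i)~$\Leftrightarrow$~(ii) is essentially a translation of definitions: if the internal nodes of $\spine{W}$ form a directed path with consecutive label sets $A_1, \ldots, A_k$, then the transitive closure $\prec_W$ is precisely the linear preposet with these equivalence classes in that order; conversely, if $\prec_W = L$ is linear, every two internal nodes of $\spine{W}$ are comparable, which in a tree forces them all to lie along a single directed path.

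For (ii)~$\Leftrightarrow$~(iv) I would combine Proposition~\ref{prop:normalFan}, which gives $C\polar(W) = C\polar(\prec_W)$, with the standard Farkas-type observation that $u_i \le u_j$ holds for all $\b{u} \in C\polar(R)$ if and only if $(i,j)$ lies in the transitive closure of $R$; in particular the cone determines the transitive closure of any preposet. Since $\prec_W$ and $L$ are both transitive, equality of their cones amounts to $\prec_W = L$.

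The direction (i)~$\Rightarrow$~(iii) is then direct: writing the cells of $W$ along the path order of $\spine{W}$ as $A_{\sigma(1)}, \ldots, A_{\sigma(k)}$, where $\sigma$ records the linear order of $L$, Lemma~\ref{lem:RHS} gives $\Bel(\delta) = B_i \eqdef A_{\sigma(1)} \cup \ldots \cup A_{\sigma(i)}$ for the $i$-th arc of $\spine{W}$. The defining hyperplanes $\sum_{a \in \Bel(\delta)} x_a = \binom{|\Bel(\delta)|+1}{2}$ of the affine hull of $\face(W)$ from Lemma~\ref{lem:faceDissection} then coincide with the defining hyperplanes $\sum_{a \in B_i} x_a = \binom{|B_i|+1}{2}$ of the affine hull of $\face(L)$.

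The main obstacle is the converse (iii)~$\Rightarrow$~(i). The key claim I would prove is that on the affine hull of $\face(L)$, the linear form $\b{x} \mapsto \sum_{a \in S} x_a$ takes the value $\binom{|S|+1}{2}$ if and only if $S$ is one of the initial unions $B_i$. The ``if'' direction is clear. For the converse, the affine hull of $\face(L)$ is invariant under permutations of coordinates within each class $A_i$, so $\sum_{a \in S} x_a$ is constant on it only when $S$ is a union of classes; writing such an $S = A_{\sigma(j_1)} \cup \ldots \cup A_{\sigma(j_m)}$ with $j_1 < \ldots < j_m$, one computes the common value as $\sum_{p=1}^m \sum_{q = |B_{j_p-1}|+1}^{|B_{j_p}|} q$, which equals $\binom{|S|+1}{2} = 1 + 2 + \ldots + |S|$ if and only if the intervals $[|B_{j_p-1}|+1, |B_{j_p}|]$ together cover $\{1, \ldots, |S|\}$, forcing $j_p = p$ and $S = B_m$. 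Granted the claim, (iii) combined with $\face(W) \subseteq H^=(\delta)$ (Lemma~\ref{lem:faceDissection}) forces each $\Bel(\delta)$ to coincide with some $B_i$; distinctness of the $\Bel$-sets together with the dimension count $|W| = k-1$ forced by the matching face dimensions in (iii) then imply $\{\Bel(\delta) : \delta \in W\} = \{B_1, \ldots, B_{k-1}\}$, a nested chain. This nested structure forces $\spine{W}$ to be a directed path whose cell order reproduces that of $L$, establishing (i).
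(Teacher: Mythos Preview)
The paper does not actually prove this proposition: it is stated as the extension of Proposition~\ref{prop:singletons} to arbitrary faces and left without argument, in keeping with the section's stated policy of omitting proofs that are routine consequences of the preposet/braid-cone dictionary of~\cite[Sect.~3.4]{PostnikovReinerWilliams}. There is therefore no ``paper's own proof'' to compare against.

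That said, your argument is correct and is precisely the kind of proof the paper is implicitly pointing to. The equivalences (i)~$\Leftrightarrow$~(ii) and (ii)~$\Leftrightarrow$~(iv) are exactly the tree/linear-preposet/braid-cone correspondence, and your appeal to Proposition~\ref{prop:normalFan} together with the fact that a braid cone determines the transitive closure of its defining preposet is the intended mechanism. For (i)~$\Rightarrow$~(iii), your identification of the hyperplanes $H^=(\Bel(\delta))$ with the hyperplanes $H^=(B_i)$ via Lemma~\ref{lem:RHS} and Lemma~\ref{lem:faceDissection} is clean. The only step with genuine content is (iii)~$\Rightarrow$~(i), and your computation---showing that $\sum_{a\in S}x_a=\binom{|S|+1}{2}$ on the affine hull of $\face(L)$ forces $S$ to be an initial union $B_m$, because the value on each block $A_j$ is the interval sum $\sum_{q=|B_{j-1}|+1}^{|B_j|}q$---is a nice direct argument. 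One small point you pass over quickly: once $\{\Bel(\delta):\delta\in W\}=\{B_1,\dots,B_{k-1}\}$ is a chain, concluding that $\spine{W}$ is a directed path still requires observing that in a tree with $k$ nodes and $k-1$ edges, if the edge-bipartitions are totally ordered by inclusion then no node can have two internal arcs on the same side; this is straightforward but worth a sentence.
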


According to Lemma~\ref{lem:refinement}, these properties are stable by coarsenings and thus satisfied by all faces containing a common vertex of the permutahedron~$\Perm$ and of the associahedron~$\Asso$.

\medskip
To conclude, we extend Proposition~\ref{prop:parallelepiped}. Note that the faces of the parallelepiped~$\Para$ are in correspondence with ternary words of length~$n$ on the alphabet~$\{-1, 0, 1\}$. For any word $w = w_1 \cdots w_n \in \{-1, 0, 1\}^n$, the normal cone of the corresponding face of~$\Para$ is the polyhedral cone~$C\polar(w) \eqdef \set{\b{x} \in H}{w_i(x_i - x_{i+1}) \le 0 \text{ for } i \in [n]}$. The following statement characterizes which normal cones of~$\Asso$ are contained in~$C\polar(w)$.

\begin{proposition}
For any dissection~$W$ of~$\ver{P}$, the labels~$i$ and~$i+1$ are always comparable in~$\spine{W}$. For any ternary word~$w \in \{-1, 0, 1\}^n$, the normal cone~$C\polar(w)$ in~$\Para$ is the union of the normal cones~$C\polar(W)$ of the dissection~$W$ of~$\ver{P}$ with the following property: for all~$i \in [n]$, the label~$i$ appears below~$i+1$ in~$\spine{W}$ if~$w_i = 1$, in the same node if~$w_i = 0$, and above if~$w_i = -1$.
\end{proposition}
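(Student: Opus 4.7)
The plan is to prove the two parts separately, extending the triangulation case of Proposition~\ref{prop:parallelepiped} to general dissections. For the comparability of $\nod{i}$ and $\nod{(i+1)}$ in $\spine{W}$, I would pick any triangulation $T$ refining $W$ and apply Lemma~\ref{lem:refinement}, which exhibits $\spine{W}$ as the contraction of the arcs of $\spine{T}$ corresponding to the diagonals in $T \ssm W$. Proposition~\ref{prop:parallelepiped} ensures that $\nod{i}$ and $\nod{(i+1)}$ are already comparable in $\spine{T}$, so they are joined there by a directed path; under arc contractions, this path either survives (possibly shortened) or collapses completely, merging the two endpoints into a single node of $\spine{W}$. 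Either way, $\nod{i}$ and $\nod{(i+1)}$ remain comparable, or become equal, in $\spine{W}$.

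For the characterization of $C\polar(w)$, I would associate to each dissection $W$ the ternary word $w(W) \in \{-1, 0, 1\}^n$ dictated by the statement, and establish the equality $\bigcup_{W : w(W) = w} C\polar(W) = C\polar(w)$. One inclusion follows immediately from Proposition~\ref{prop:normalFan}: $C\polar(W)$ is the preposet cone of $\spine{W}$, and transitivity of this preposet forces the inequalities and equalities describing $C\polar(w)$ to hold on $C\polar(W)$ as soon as $w(W) = w$. The reverse inclusion rests on the fact that the normal fan of $\Asso$ refines the normal fan of $\Para$: by Proposition~\ref{prop:greedy}, the facet inequalities $H^\ge([i])$ and $H^\ge([n+1] \ssm [i])$ of $\Para$ coincide with $H^\ge(\Bel(\minDiag^{\ver{P}}_i))$ and $H^\ge(\Bel(\maxDiag^{\ver{P}}_{n+1-i}))$, which are facet-defining for $\Asso$ by Theorem~\ref{theo:associahedron}. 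Consequently, $\Para$'s facet inequalities lie among $\Asso$'s, so $\Para$'s normal fan is a coarsening of $\Asso$'s.

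The main obstacle is to pinpoint exactly which cones $C\polar(W)$ of $\Asso$'s normal fan compose a given $C\polar(w)$. In the relative interior of $C\polar(W)$, the coordinates of $u$ satisfy $u_i = u_j$ precisely when $\nod{i}$ and $\nod{j}$ belong to the same node of $\spine{W}$, and $u_i < u_j$ precisely when there is a directed path in $\spine{W}$ from $\nod{i}$ to $\nod{j}$. Matching this against the sign pattern of $u_i - u_{i+1}$ prescribed by $w$ on the relative interior of $C\polar(w)$ shows that the relative interior of $C\polar(W)$ lies in the relative interior of $C\polar(w)$ exactly when $w(W) = w$. Since the cones of $\Asso$'s normal fan partition $H$ into their relative interiors, and since $C\polar(w)$ is the closure of its relative interior, the desired decomposition $C\polar(w) = \bigcup_{W : w(W) = w} C\polar(W)$ follows by taking closures.
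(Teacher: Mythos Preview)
The paper does not supply a proof of this proposition: it is stated at the end of Section~\ref{subsec:normalFans} without argument, in keeping with the authors' remark that ``essentially all statements in this section are combinations of Lemma~\ref{lem:flip} with results of the dictionary between preposets and braid cones'' from~\cite{PostnikovReinerWilliams}, and that proofs are omitted. Your proposal therefore cannot be compared against a written proof, but it is a correct and natural way to fill in the details the paper leaves implicit.

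Two small comments. First, your description of the relative interior of $C\polar(W)$ (``$u_i < u_j$ precisely when there is a directed path from $\nod{i}$ to $\nod{j}$'') is only literally true for comparable labels; for incomparable labels there is no constraint on $u_i - u_j$. This does not affect the argument, since you only use the case $j = i+1$, where the first part of the proposition guarantees comparability---but it is worth saying so explicitly. Second, the paper's formula $C\polar(w) = \set{\b{x} \in H}{w_i(x_i - x_{i+1}) \le 0}$ is, read literally, vacuous at indices with $w_i = 0$; for your relative-interior matching to work you are (correctly) interpreting $C\polar(w)$ as the genuine normal cone of the corresponding face of $\Para$, which additionally forces $x_i = x_{i+1}$ whenever $w_i = 0$. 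With these two points made precise, your argument goes through.
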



\subsection{Weak order, slope increasing flip lattice, and boolean lattice}
\label{sec:latticeQuotient}

To finish this section, we briefly mention the relationship between the weak order on permutations of~$[n+1]$, the slope increasing flip lattice on triangulations of~$\ver{P}$ presented in Section~\ref{subsec:slopeIncreasingFlip}, and the boolean lattice. They can all be seen in terms of orientations of the $1$-skeleta of~$\Perm$, $\Asso$, and~$\Para$, and they are related by the surjections~$\surjectionPermAsso$ and~$\surjectionAssoPara$ studied in this section.

An \defn{inversion} of a permutation~$\sigma$ of~$[n+1]$ is a pair~$i<j$ such that~$\sigma(i) > \sigma(j)$. The \defn{right weak order} on the permutations of~$[n+1]$ is defined as the inclusion order on the inversion sets of the permutations, and induces a lattice structure. The minimal element of this lattice is the identity permutation~$e \eqdef [1, 2, \dots, n+1]$, while the maximal element is~$w_\circ \eqdef [n+1, n, \dots, 2, 1]$. The vector~$\ver{U} \eqdef (n, n-2, \dots, -n+2, -n)$ defined in Section~\ref{subsec:slopeIncreasingFlip} joins the vertices corresponding to these two extremal permutations. Orienting the $1$-skeleton of the permutahedron~$\Perm$ according to the direction~$\ver{U}$ yields the Hasse diagram of the right weak order. Similarly, we have seen that orienting the $1$-skeleton of the associahedron~$\Asso$ according to the direction~$\ver{U}$ yields the increasing flip graph~$\flipGraph(\ver{P})$ on the triangulations of~$\ver{P}$. Finally, orienting the $1$-skeleton of the parallelepiped~$\Para$ according to the direction~$\ver{U}$ yields the Hasse diagram of the boolean lattice. In fact, as observed in~\cite{Reading-CambrianLattices, Reading-latticeCongruences}, these lattices are related by the surjections~$\surjectionPermAsso$ and~$\surjectionAssoPara$.

\begin{proposition}[\protect{\cite[Prop.~6.6]{Reading-CambrianLattices}}]
The surjection~$\surjectionPermAsso$ from the permutations of~$[n+1]$ to the triangulations of~$\ver{P}$ is a lattice homomorphism from the weak order to the slope increasing flip lattice. Similarly, the surjection~$\surjectionAssoPara$ from the triangulations of~$\ver{P}$ to the binary word with~$n$ letters is a lattice homomorphism from the slope increasing flip lattice to the boolean lattice.
\end{proposition}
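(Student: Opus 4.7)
The approach I would take is to apply the classical characterization of a \emph{lattice congruence}: a surjection $f \colon L \to L'$ between finite lattices is a lattice homomorphism if and only if each fiber is an interval of $L$ and the maps sending an element to the minimum, respectively the maximum, of its fiber are order-preserving. Equivalently, it suffices to check that fibers are intervals and that $f$ sends each cover relation either to an equality or to a cover relation. This reduces the proposition to a local analysis in both cases.

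For $\surjectionPermAsso$, Proposition~\ref{prop:maximalCambrianCones} identifies the fiber $\surjectionPermAsso^{-1}(T)$ with the set of linear extensions of the transitive closure $\prec_T$ of the spine $\spine{T}$. I would first argue that these linear extensions form an interval in the weak order: any two of them differ by a sequence of transpositions of adjacent entries that are incomparable in $\prec_T$, and each such transposition is a cover relation in the weak order that stays within the fiber. To handle cover relations I would then track the effect of an adjacent transposition $\sigma \lessdot \sigma'$ on the leveled spine construction $\leveledBijection{\sigma}$ given in Section~\ref{subsec:maximalNormalCones}: either the two swapped entries label nodes incomparable in $\spine{\surjectionPermAsso(\sigma)}$, in which case $\surjectionPermAsso(\sigma) = \surjectionPermAsso(\sigma')$, or a single local rearrangement of the spine occurs which, by Lemmas~\ref{lem:flip} and~\ref{lem:slopeIncreasing}, is precisely a slope increasing flip.

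For $\surjectionAssoPara$, the argument is parallel but simpler. By Proposition~\ref{prop:parallelepiped}, the fiber $\surjectionAssoPara^{-1}(w)$ is determined by the relative order of $\nod{i}$ and $\nod{(i+1)}$ in the spine for each $i \in [n]$. Lemma~\ref{lem:flip} implies that a slope increasing flip $T \to \widetilde T$ whose quadrilateral has intermediate vertices $i < j$ reverses only the relative position of $\nod{i}$ and $\nod{j}$ in the spine and leaves all other relative orders invariant. Consequently, the image under $\surjectionAssoPara$ is unchanged unless $j = i+1$, in which case only the $i$-th coordinate changes, flipping from $+1$ to $-1$ in the direction of the slope increasing orientation. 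This simultaneously shows that the fibers of $\surjectionAssoPara$ are intervals in the slope increasing flip lattice and that cover relations descend either to equalities or to cover relations in the boolean lattice.

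The main obstacle is the cover-relation analysis for $\surjectionPermAsso$: establishing that an adjacent transposition $\sigma \lessdot \sigma'$ produces either the same image or a single slope increasing flip under $\surjectionPermAsso$ requires a careful case analysis of how swapping $\sigma(k)$ and $\sigma(k+1)$ interacts with the insertion rules that build $\leveledBijection{\sigma}$ level by level. Once this local step is in place, the general lattice congruence criterion delivers the homomorphism property for both surjections at once, matching the earlier observation of N.~Reading in~\cite{Reading-CambrianLattices}.
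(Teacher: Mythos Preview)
The paper does not give its own proof of this proposition: it is stated with a reference to Reading~\cite[Prop.~6.6]{Reading-CambrianLattices} and no argument is supplied. So there is nothing in the paper to compare your proposal against; what remains is to assess whether your outline would actually go through.

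Your overall strategy---reduce to the standard characterization of lattice congruences and then do a local cover-relation analysis---is the right one, and it is essentially how Reading proceeds. The genuine gap is in your treatment of the interval condition. For~$\surjectionPermAsso$ you argue that any two linear extensions of~$\prec_T$ are connected by adjacent transpositions staying in the fiber; this shows the fiber is \emph{connected} in the Hasse diagram of the weak order, not that it is an \emph{interval}. The two are not the same: for a general poset~$P$ on~$[n+1]$ the set of linear extensions is always connected by such moves but is typically not a weak-order interval (for instance, take~$P$ on~$\{1,2,3,4\}$ with only the relations~$1 \prec 4$ and~$2 \prec 3$; the linear extensions~$2143$ and~$1423$ have no common upper bound among linear extensions, since their weak-order join forces the forbidden inversion~$(1,4)$). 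That the fibers of~$\surjectionPermAsso$ are intervals is therefore a genuinely special feature of spines, and it is precisely the substantive content of Reading's theorem; your sketch assumes it rather than proves it. The same issue recurs for~$\surjectionAssoPara$: your flip analysis shows the fibers are connected and that covers descend correctly, but not that each fiber has a unique minimum and maximum in the slope increasing flip lattice.

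A secondary point: the criterion you state---fibers are intervals and covers go to covers-or-equalities---is not literally the standard one. You also need that the down- and up-projection maps are order-preserving (equivalently, that whenever~$x \lessdot y$ with~$f(x) \ne f(y)$, the bottom of the fiber of~$y$ lies above the bottom of the fiber of~$x$, and dually). This is usually established as part of the same local analysis, but it does not follow automatically from what you wrote.
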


\begin{exm}{2}{Loday's associahedron, continued}
\label{exm:Loday9}
For Loday's polygon~$\ver{P}\loday$, the maps~$\surjectionPermAsso$ and~$\surjectionAssoPara$ send the weak order to the Tamari lattice to the boolean lattice. These maps were used by J.-L.~Loday and M.~Ronco to construct and study Hopf algebra structures on binary trees~\cite{LodayRonco}.
\end{exm}

These surjections identify the identity permutation~$e$, the minimal triangulation~$\minTriang(\ver{P})$, and the word~$[1]^n$, and similarly the longest permutation~$w_\circ$, the maximal triangulation~$\maxTriang(\ver{P})$, and the word~$[-1]^n$. Moreover, the minimal and maximal permutations in the fiber~$\surjectionPermAsso^{-1}(T)$ of a triangulation~$T$ of~$\ver{P}$ are easily computed: they correspond to the minimal and maximal linear extensions of the spine~$\spine{T}$. For example, the minimal and maximal permutations of the fiber of the triangulation~$T\ex$ of \fref{fig:flip}\,(left) are~$[3,4,2,1,7,6,8,5]$ and~$[7,8,3,2,5,4,6,1]$, respectively.


\section{Spines and Minkowski decompositions}
\label{sec:Minkowski}

Since the normal fan of~$\Asso$ coarsens the normal fan of the permutahedron~$\Perm$, the associahedra studied in this paper fit into the class of \defn{generalized permutahedra} introduced by A.~Postnikov in~\cite{Postnikov}.  Any generalized permutahedron is obtained by gliding facets from the permutahedron~$\Perm$ while staying in its deformation cone and can therefore be described as
\[
\Defo \eqdef \biggset{\b{x} \in \R^{n+1}}{\sum_{j \in [n+1]} x_j = z_{[n+1]} \text{ and } \sum_{j \in J} x_j \ge z_J \text{ for } \varnothing \ne J \subsetneq [n+1]},
\]
for a family~$\{z_J\}_{\varnothing \ne J \subseteq [n+1]} \in \R^{2^{[n+1]}-1}$ such that $z_{[n+1]} = \binom{n+2}{2}$ and $z_I + z_J \le z_{I \cup J} + z_{I \cap J}$ for all non empty $I, J \subseteq [n+1]$. In~\cite{ArdilaBenedettiDoker}, F.~Ardila, C.~Benedetti, and J.~Doker proved that any generalized permutahedron can be decomposed into a Minkowski sum and difference of dilated faces of the standard simplex 
\[
\Defo = \sum_{\varnothing \ne I \subseteq [n+1]} y_I \simplex_I,
\]
where~$\simplex_I \eqdef \conv\set{e_i}{i \in I}$. Here, the Minkowski difference~$P-Q$ of two polytopes~${P,Q \subset \R^{n+1}}$ is defined to be the unique polytope~$R \subset \R^{d+1}$ such that~$P = Q + R$, if it exists. In other words,~${P-Q}$ is only defined if~$Q$ is a Minkowski summand of~$P$. If we assume that all values~$\{z_J\}$ are tight, \ie that they define face supporting hyperplanes of~$\Defo[\{z_J\}]$, then the dilation factors~$\{y_I\}$ relate to the right hand sides~$\{z_J\}$ by M\"obius inversion
\[
z_J = \sum_{I \subseteq J} y_I
\qquad\text{and}\qquad
y_I = \sum_{\varnothing \ne J \subseteq I} (-1)^{|I \ssm J|} z_J.
\]

We now focus on the associahedra~$\Asso$. First, we determine the tight right hand sides~$\{z_J\}$ for~$\Asso$; the proof is a straightforward computation.

\begin{proposition}[\protect{\cite[Prop.~3.8]{Lange}}]
Fix a subset~$J \subseteq [n+1]$, and consider the dissection~$W_J$ of~$\ver{P}$ whose associated face~$\face(W_J)$ maximizes the linear functional~$\b{x} \mapsto \sum_{j \in J} x_j$ on~$\Asso$. Then
\[
z_J = \sum_{\delta \in W_J} \binom{\bel(\delta) + 1}{2} - \binom{n + 2}{2} |J \cap U|,
\]
where the sum runs over all diagonals~$\delta$ of~$W_J$, including boundary ones.
\end{proposition}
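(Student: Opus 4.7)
The approach is a direct calculation of $\sum_{j \in J} x_j(T)$ for any triangulation~$T$ of~$\ver{P}$ refining~$W_J$. Such a~$T$ exists by subdividing each cell of~$W_J$ into triangles (Lemma~\ref{lem:refinement}), and Lemma~\ref{lem:faceDissection} then places~$\b{x}(T)$ on the face~$\face(W_J)$, so that $z_J = \sum_{j \in J} x_j(T) = \langle \one_J, \b{x}(T) \rangle$.

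The input from earlier in the section is Proposition~\ref{prop:main}(ii): since every diagonal of~$W_J$ is a diagonal of~$T$, we have the tight equations $\sum_{j \in \Bel(\delta)} x_j(T) = \binom{\bel(\delta)+1}{2}$ for $\delta \in W_J$ and also for every upper boundary edge of~$\ver{P}$ (the lower boundary edges trivially giving $0 = 0$). Combined with $\sum_{j \in [n+1]} x_j(T) = \binom{n+2}{2}$, these equations span the affine hull of~$\face(W_J)$, so the formula will follow from the indicator-vector decomposition
\[
\one_J = \sum_{\delta} \one_{\Bel(\delta)} - |J \cap U| \cdot \one_{[n+1]},
\]
where~$\delta$ runs over the set of diagonals prescribed in the statement; taking the inner product with~$\b{x}(T)$ then yields the stated formula.

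The main obstacle is the combinatorial verification of this indicator identity. Its truth rests on the geometry of~$W_J$ dictated by Proposition~\ref{prop:normalFan}: the hypothesis that $\one_J$ lies in the normal cone of~$\face(W_J)$ forces each cell of~$W_J$ to have its intermediate-vertex set either entirely inside~$J$ or entirely disjoint from~$J$, with the spine's arcs going only from $J^c$-cells to $J$-cells. The corrective term $-\binom{n+2}{2}|J\cap U|$ arises from the asymmetric role of up versus down labels in the coordinate definition $x_j(T) = n+2 - |\chor(j)|$ (up) versus $x_j(T) = |\chor(j)|$ (down). I expect the trickiest part to be the case analysis, for each $j \in [n+1]$, on whether $\ver{j}$ is up or down and whether $j \in J$ or not, in order to count how many of the selected diagonals~$\delta$ satisfy $j \in \Bel(\delta)$.
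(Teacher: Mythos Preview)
Your strategy is right, and it is exactly the kind of ``straightforward computation'' the paper alludes to without giving details: pick any triangulation $T$ refining $W_J$, use Proposition~\ref{prop:main}(ii) to get $\sum_{j\in\Bel(\delta)} x_j(T)=\binom{\bel(\delta)+1}{2}$ for each relevant $\delta$, and reduce everything to an indicator identity which is then paired against $\b{x}(T)$.

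The gap is in your reading of which diagonals are summed. You take ``all diagonals of $W_J$, including boundary ones'' to mean the internal diagonals of $W_J$ together with \emph{every} boundary edge of~$\ver{P}$. With that reading the identity $\one_J=\sum_\delta\one_{\Bel(\delta)}-|J\cap U|\,\one_{[n+1]}$ is false. For a quick counterexample, take $n=2$ with $U=\{2\}$, $D=\{1,3\}$, and $J=\{1\}$: then $W_J=\{\ver{0}\ver{3}\}$, and summing $\one_{\Bel(\delta)}$ over $\ver{0}\ver{3}$ and all five boundary edges gives $\one_{\{1\}}+2\,\one_{[3]}$; since $|J\cap U|=0$ your right-hand side is $(3,2,2)\ne(1,0,0)$.

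What is actually meant (and what the proof of Theorem~\ref{theo:MinkowskiDilationFactors} uses) is the set of edges of the $x$-monotone path $\pi_1$ through the vertices labeled by $D\symdif J$: these are the internal diagonals of $W_J$ together with only those boundary edges of $\ver{P}$ that happen to lie on $\pi_1$. Geometrically $\pi_1$ is the interface separating the $J$-cells from the $J^c$-cells of $W_J$. With this index set your indicator identity holds on the nose --- for each $j\in[n+1]$ one checks that exactly $[j\in J]+|J\cap U|$ edges of $\pi_1$ have $j\in\Bel(\delta)$ --- and the rest of your argument goes through. The proof of Theorem~\ref{theo:MinkowskiDilationFactors} corroborates this reading: there the intervals $[i,j]$ for the ``diagonals $\ver{i}\ver{j}$ of $W_J$'' tile $[0,n+2]$ with overlaps only at endpoints, and every $u\in J\cap U$ is an endpoint of exactly two of them, which only makes sense for the edges of $\pi_1$.

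One further wrinkle you should be aware of: the word ``maximizes'' in the statement is a slip. The remark right after the proposition identifies $\spine{W_J}=\surjectionPermAsso(J,[n+1]\ssm J)$, and that dissection \emph{minimizes} $\sum_{j\in J}x_j$; correspondingly the spine arcs go from $J$-cells to $J^c$-cells, not the direction you wrote. This does not break your argument (you only need that $\b{x}(T)$ sits on $\face(W_J)$ so that $\sum_{j\in J}x_j(T)=z_J$), but it explains why the cell structure is the mirror image of what you described.
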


Note that the dissection~$W_J$ from this proposition can be computed with the methods presented in Section~\ref{subsec:normalFans} since its spine is~$\spine{W_J} = \surjectionPermAsso(J, [n+1] \ssm J)$.

We could now compute the dilation factors~$\{y_I\}$ in the Minkowski decomposition of~$\Asso$ from the right hand sides~$\{z_J\}$ by M\"obius inversion. As observed by C.~Lange in~\cite{Lange}, it turns out however that the inversion simplifies to closed formulas presented below. In this section, we give a short and self-contained proof for these formulas. Although our proof is largely inspired from spines, we think that the dual presentation matches with the geometric intuition of the reader better. On the other hand, the understanding of this proof in terms of spines is the key to the generalization of Theorem~\ref{theo:MinkowskiDilationFactors} to signed tree associahedra~\cite{Pilaud}. We therefore introduce the following definitions both in the primal and dual picture and use dissections in the proof of Theorem~\ref{theo:MinkowskiDilationFactors}.

We say that a $2$-dimensional polygonal cell~$\ver{C}$ of a dissection~$W$ of~$\ver{P}$ is a \defn{big top} if its lower convex hull is contained in the lower hull of~$\ver{P}$. We think of~$\ver{C}$ as schematic view of a cross-section of a circus tent where the path on the upper convex hull (\ie connecting the left-most with the right-most vertex of~$\ver{C}$) corresponds to the tarpaulin. In terms of spines, the node~$\nod{C}$ of~$\spine{W}$ has no incoming arcs, only incoming blossoms. We denote by~$\delta_\ell(\ver{C})$ (resp.~by~$\delta_r(\ver{C})$) the edge connecting the two leftmost (resp.~rightmost) vertices of~$\ver{C}$, and define the \defn{weight} of~$\ver{C}$ to be
\[
\Omega(\ver{C}) \eqdef (-1)^{|\ver{C} \cap U|}\ell(\ver{C})r(\ver{C}),
\]
where
\[
\ell(\ver{C}) = 
\begin{cases}
-1 & \text{if } \delta_\ell(\ver{C}) \text{ is on the lower hull of } \ver{C} \\
\abo(\delta_\ell(\ver{C})) & \text{otherwise,}
\end{cases}
\]
and similarly for~$r(\ver{C})$. In terms of spines, the value of~$\ell(\ver{C})$ is $-1$ if $\delta_\ell(\ver{C})^*$ is incoming at~$\nod{C}$ and counts the number of blossoms of the subspine determined by the sink of~$\delta_\ell(\ver{C})^*$ if it is outgoing at~$\nod{C}$. By extension, we say that~$I \subseteq [n+1]$ is a big top if it is the set of intermediate vertices of a big top~$\ver{C}_I$ (that is, the index set of~$\nod{C}_I$), and we then define the weight~$\Omega(I) \eqdef \Omega(\ver{C}_I)$. We are now ready to describe the Minkowski decomposition of the associahedron~$\Asso$.

\begin{theorem}[\protect{\cite[Thm.~4.3]{Lange}}]
\label{theo:MinkowskiDilationFactors}
For any~$I \subseteq [n+1]$, the dilation factor~$y_I$ in the Minkowski decomposition of~$\Asso$ is given by
\[
y_I = 
\begin{cases}
0 & \text{if } I \text{ is not a big top,} \\
\Omega(I) + n + 2 & \text{if } I = \{u\} \text{ and } u \in U, \\
\Omega(I) & \text{otherwise.}
\end{cases}
\]
\end{theorem}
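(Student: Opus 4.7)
The plan is to combine Möbius inversion with the preceding proposition. Starting from
\[
y_I = \sum_{\varnothing \ne J \subseteq I} (-1)^{|I \ssm J|} z_J
\]
and substituting the formula for $z_J$, the computation splits into a main term indexed by diagonals of $\ver{P}$ and a correction term coming from the $-\binom{n+2}{2}|J \cap U|$ piece. A routine Möbius calculation shows that $\sum_{J \subseteq I} (-1)^{|I \ssm J|} |J \cap U|$ vanishes unless $I = \{u\}$ for some $u \in U$, which accounts for the extra constant summand in that case.

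For the main term, I would swap the order of summation and analyse each diagonal $\delta = \ver{i}\ver{j}$ separately. Using the description of $W_J$ via the separating path $\surjectionPermAsso(J, [n+1] \ssm J)$ from Section~\ref{subsec:normalFans}, the condition $\delta \in W_J$ completely determines $J \cap ([i,j] \cap [n+1])$: the down labels strictly between $i$ and $j$ are forced into $J$, the up labels strictly between are forced out, and each endpoint in $[n+1]$ is forced in or out according to whether it is up or down. Consequently the inner alternating sum $\sum_{J \subseteq I,\, \delta \in W_J} (-1)^{|I \ssm J|}$ factors over positions outside $[i,j]$ and vanishes unless $I \subseteq [i,j]$ and the labels forced into $J$ already lie in $I$; when it does not vanish it collapses to a sign of the form $(-1)^{|I \cap N(\delta)|}$ for an explicit set $N(\delta)$ depending only on $\delta$.

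A direct enumeration then identifies the diagonals that can contribute to $y_I$: the left endpoint $i$ is either $\min I$ itself or the nearest element of $\{0\} \cup \Down$ strictly left of $\min I$, and symmetrically for $j$, subject to the global constraint $\Down \cap (\min I, \max I) \subseteq I$. This last constraint is precisely the requirement that $I$ be a big top $\ver{C}_I$; when it fails, every contribution vanishes and $y_I = 0$, giving the first case of the theorem. When $I$ is a big top, the up to four contributing diagonals match the two candidates for $\delta_\ell(\ver{C}_I)$ together with the two candidates for $\delta_r(\ver{C}_I)$ appearing in the definition of $\ell$ and $r$.

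The main obstacle is the final assembly: expanding the four values $\binom{\bel(\delta)+1}{2}$ with their signs and verifying that they telescope into $\ell(\ver{C}_I) \cdot r(\ver{C}_I)$ times the global sign $(-1)^{|I \cap U|}$. The identity should split cleanly into the sub-case $\delta_\ell$ on the lower hull of $\ver{C}_I$ (contributing $\ell = -1$) versus $\delta_\ell$ off the lower hull (contributing $\ell = \abo(\delta_\ell)$), and analogously for $r$. Once this collapse is verified, adding the singleton correction term yields $\Omega(I)$, respectively $\Omega(I)+n+2$, completing the proof.
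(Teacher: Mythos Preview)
Your approach is sound but runs in the opposite direction from the paper. You compute $y_I$ directly via M\"obius inversion $y_I = \sum_{\varnothing \ne J \subseteq I} (-1)^{|I \ssm J|} z_J$, swapping the order of summation over diagonals; the paper instead fixes $J$ and verifies the \emph{forward} identity $\sum_{I \subseteq J} y_I = z_J$ with the claimed values of $y_I$. Your identification of the constraint $\delta \in W_J$ (down labels in $(i,j)$ forced into $J$, up labels forced out, endpoints determined by type) is correct, and the big-top condition together with the at-most-four surviving diagonals emerges exactly as you describe.

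What the paper's route buys is that it sidesteps the sign-tracking you flag as the ``main obstacle''. Fixing $J$, the paper partitions the big tops $I \subseteq J$ according to which edge $\delta = \ver{i}\ver{j}$ of the separating path they sit under, then splits by whether $I$ contains the endpoints $i$, $j$. A pairing $I \leftrightarrow I \symdif \{u\}$ kills any big top with an interior up vertex, and the remaining weights collect into a quadratic in $\lef_\delta, \mi_\delta, \rig_\delta$ that telescopes along the path. No alternating signs, no four-term identity to verify by cases. Your approach, by contrast, isolates each $y_I$ individually, which is conceptually more direct but forces you through the case split on $\delta_\ell, \delta_r$ that you outline.

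One imprecision to correct: the M\"obius sum $-\binom{n+2}{2}\sum_{\varnothing \ne J \subseteq I}(-1)^{|I \ssm J|}|J \cap U|$ equals $-\binom{n+2}{2}$, not $n+2$, when $I = \{u\}$ with $u \in U$. So the main term must overshoot $\Omega(\{u\})$ by $\binom{n+2}{2}+(n+2) = \binom{n+3}{2}$ in the singleton-up case, not merely by $n+2$. This is consistent with the degeneration to three surviving diagonals for singletons (the pair $(\min I,\max I)$ collapses), but the bookkeeping in your final paragraph needs to absorb this extra $\binom{n+2}{2}$ before the assembly is complete.
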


\begin{proof}
It is sufficient to check that~$\sum_{I \subseteq J} y_I = z_J$ for any~$J \subseteq [n+1]$. Fix a set~$J \subseteq [n+1]$ and consider the dissection~$W_J$ whose associated face~$\face(W_J)$ maximizes the functional~$\b{x} \mapsto \sum_{j \in J} x_j$. For any diagonal~$\delta = \ver{i}\ver{j}$ of~$W_J$ with~$i < j$, we define
\[
\lef_\delta \eqdef |\Bel(\delta) \cap [i]|, \qquad \mi_\delta \eqdef |\Bel(\delta) \cap [i+1, j-1]| \qquad \text{and}\qquad\rig_\delta \eqdef |\Bel(\delta) \ssm [j-1]|.
\]
We observe that:
\begin{enumerate}[(i)]
\item We have~$\bel(\delta) = |\Bel(\delta)| = \lef_\delta + \mi_\delta + \rig_\delta$.
\item For any big top~$I \subseteq J$, the cell~$\ver{C}_I$ is contained in a single cell of~$W_J$.
\item If~$J$ has an up vertex strictly in between two diagonals~$\delta_\ell$ and~$\delta_r$ of~$\ver{P}$, then the sum of the weights of all big tops~$I \subseteq J$ of~$\ver{P}$ with prescribed~$\delta_\ell(\ver{C}_I) = \delta_\ell$ and~$\delta_r(\ver{C}_I) = \delta_r$ vanishes. Indeed, if~$u$ is such a vertex, we can partition the big tops into pairs~$\big\{I, I \symdif \{u\} \big\}$ whose weights cancel since~$\Omega(I) = - \Omega(I \symdif \{u\})$.
\end{enumerate}
We now want to compute~$\sum_{I \subseteq J} \Omega(I)$. We use Observations~(ii) and~(iii) above to restrict the sum to big tops whose intermediate vertices are all located between or on the vertical lines through the endpoints of a diagonal of~$W_J$. For the diagonal~$\delta = \ver{i}\ver{j}$, we partition the big tops~$I \subseteq J \cap [i,j]$ into those which
\begin{itemize}
\item contain neither~$i$ nor~$j$: there are~$\binom{\mi_\delta + 1}{2}$ such big tops, all of weight~$1$;
\item contain $i$ but not~$j$ and at least one other point: there are~$\mi_\delta$ such big tops, all of weight~$\lef_\delta$, if~$i \in U$ and none otherwise;
\item contain~$j$ but not~$i$ and at least one other point: there are~$\mi_\delta$ such big tops, all of weight~$\rig_\delta$, if~$j \in U$ and none otherwise;
\item contain both~$i$ and~$j$: there is one such big top of weight~$\lef_\delta\rig_\delta$ if~$i,j \in U$ and none otherwise;
\item equal~$\{i\}$: when~$i \in D$ there is no such big top; when~$i \in U$ there is only one of weight~$\Omega(\{i\}) = -\lef_\delta(n+2-\lef_\delta)$, but it is counted in both diagonals of~$W_J$ incident~to~$i$;
\item equal~$\{j\}$: when~$j \in D$ there is no such big top; when~$j \in U$ there is only one of weight~$\Omega(\{j\}) = -(n+2-\rig_\delta)\rig_\delta$, but it is counted in both diagonals of~$W_J$ incident~to~$j$.
\end{itemize}
Summing all contributions, it follows that
\begin{align*}
\sum_{I \subseteq J} \Omega(I)
& = \sum_{\delta \in W_J} \bigg( \!\! \binom{\mi_\delta + 1}{2} + \lef_\delta \mi_\delta + \mi_\delta \rig_\delta + \lef_\delta \rig_\delta - \frac{1}{2} \lef_\delta (n + 2 - \lef_\delta) - \frac{1}{2} (n + 2 - \rig_\delta) \rig_\delta \!\! \bigg) \\
& = \sum_{\delta \in W_J} \bigg( \!\! \binom{\lef_\delta + \mi_\delta + \rig_\delta + 1}{2} - \frac{n+3}{2} ( \lef_\delta + \rig_\delta ) \!\! \bigg) \\
& = \sum_{\delta \in W_J} \binom{\bel(\delta) + 1}{2} - \frac{(n+3)(n+2)|J \cap U|}{2}.
\end{align*}
For the last equality, we used Observation~(i) above and the fact that~$\lef_{\delta'} + \rig_{\delta''} = n+2$ for any two diagonals~$\delta'$ and~$\delta''$ sharing an endpoint in~$J \cap U$. We therefore obtain
\[
\sum_{I \subseteq J} y_I = \sum_{I \subseteq J} \Omega(I) + (n+2)|J \cap U| = \sum_{\delta \in W_J} \binom{\bel(\delta) + 1}{2} - \binom{n+2}{2}|J \cap U| = z_J.
\qedhere
\]
\end{proof}

\begin{exm}{2}{Loday's associahedron, continued}
\label{exm:Loday10}
For Loday's polygon~$\ver{P}\loday$, the dilation factors are given by~$y_I = 1$ if~$I$ is an interval of~$[n+1]$, and~$y_I = 0$ otherwise~\cite{Postnikov}.
\end{exm}


\section{Spines and invariants}
\label{sec:invariants}

This section is devoted to two further properties of~$\Asso$ which are invariant if the combinatorics of the $(n+3)$-gon~$\ver{P}$ changes. First, we show that the vertex barycenters of all associahedra~$\Asso$ coincide with that of the permutahedron~$\Perm$. This surprising invariant was observed in~\cite{Loday} for J.-L.~Loday's associahedron, generalized in~\cite{HohlwegLange} and proved in~\cite{HohlwegLortieRaymond}. We use spines to achieve a shorter proof of this result. Second, we relate spines to the Narayana numbers, using the fact that the $f$- and $h$-vectors of~$\Asso$ are independent of the choice of the $(n+3)$-gon~$\ver{P}$.


\subsection{Barycenters}
\label{subsec:barycenter}

In this section, we give an alternative proof of the following statement that was first proved by C.~Hohlweg, J.~Lortie and A.~Raymond~\cite{HohlwegLortieRaymond}, see Section~\ref{sec:existingWork} for a discussion on previous approaches and for references.

\begin{theorem}[\protect{\cite[Thm.~3.3]{HohlwegLortieRaymond}}]
\label{theo:barycenter}
The vertex barycenters of the associahedron~$\Asso$ and of the permutahedron~$\Perm$ coincide.
\end{theorem}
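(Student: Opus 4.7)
The plan is to reduce Theorem~\ref{theo:barycenter} to the coordinate-wise identity
\[
\sum_{T} x_j(T) \;=\; \frac{(n+2)\,C_{n+1}}{2} \qquad \text{for every } j \in [n+1],
\]
where $T$ ranges over the triangulations of~$\ver{P}$ and $C_{n+1}$ denotes their number. Dividing by $C_{n+1}$ then gives that the $j$-th coordinate of the vertex barycenter of~$\Asso$ equals $\frac{n+2}{2}$, which is the $j$-th coordinate of the vertex barycenter of~$\Perm$.

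Using the spine formulas $x_j(T) = |\chor(j)|$ when $j \in D$ and $x_j(T) = n+2 - |\chor(j)|$ when $j \in U$, this identity is equivalent in both cases to the uniform statement $\sum_T |\chor(j)| = \frac{(n+2)\,C_{n+1}}{2}$. Since $|\chor(j)|$ counts the unordered pairs of blossoms of~$\spine{T}$ whose joining path reverses at~$\nod{j}$, exchanging the order of summation rewrites the left-hand side as the number of triples $(T, \{b,b'\})$, with $b, b'$ distinct blossoms of~$\spine{T}$ and the path between them in~$\spine{T}$ reversing at~$\nod{j}$.

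The natural tool to evaluate this count is the spine-reversal map~$\iota$, which reverses the orientation of every arc of~$\spine{T}$. By Proposition~\ref{prop:alternativeSpines}, $\iota$ is a bijection between triangulations of~$\ver{P}$ and triangulations of the polygon~$\ver{P}'$ obtained from~$\ver{P}$ by swapping its up and down labels. Since arrow reversal preserves blossom counts in each subtree at each node while swapping incoming/outgoing roles, the $j$-th coordinate of $\b{x}(T) \in \Asso[\ver{P}]$ and the corresponding coordinate of the vertex of~$\Asso[\ver{P}']$ attached to $\iota(T)$ always sum to $n+2$. This proves that the barycenters of $\Asso[\ver{P}]$ and $\Asso[\ver{P}']$ average to~$\frac{n+2}{2}\one$.

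The main obstacle is promoting this \emph{averaged} identity to the exact statement for~$\Asso[\ver{P}]$ alone. I plan to do this by showing that the vertex barycenter of $\Asso[\ver{P}]$ is independent of the up/down partition of~$[n+1]$. The strategy is inductive: connect~$\ver{P}$ to~$\ver{P}'$ through a sequence of elementary moves, each flipping the up/down status of a single intermediate vertex~$\ver{i}$, and verify that each move preserves the vertex barycenter via a local spine surgery in a neighborhood of~$\ver{i}$, reducing the invariance to a Catalan-style combinatorial identity tractable by induction on~$n$, for instance by peeling off an "ear" (a triangle incident to a boundary edge) of~$\ver{P}$.
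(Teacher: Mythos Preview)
Your reduction to $\sum_T |\chor(j)| = \tfrac{(n+2)C_{n+1}}{2}$ is correct, and the spine-reversal involution~$\iota$ is a genuine and clean observation: it does show that the barycenters of~$\Asso[\ver{P}]$ and~$\Asso[\ver{P}']$ (with~$\ver{P}'$ the polygon with up/down labels swapped) average to~$\tfrac{n+2}{2}\one$. The gap is entirely in your third step. You have not actually proved that flipping the up/down status of a single vertex~$\ver{i}$ leaves~$\sum_T |\chor(j)|$ unchanged; you have only announced a plan involving ``local spine surgery'' and an unspecified ``Catalan-style combinatorial identity''. This is the whole difficulty: a single up/down flip does not act locally on spines (it changes the degree profile at~$\nod{i}$ and hence the global tree structure), there is no obvious bijection between triangulations of the two polygons that controls~$|\chor(j)|$ for a fixed~$j \ne i$, and the ear-peeling induction you sketch does not commute with the vertex flip in any evident way. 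Until you write down the identity you intend to prove and the bijection or recursion that proves it, this step is a restatement of the theorem rather than a proof.

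The paper takes a different route that avoids comparing different polygons altogether. It keeps~$\ver{P}$ fixed and instead uses the combinatorial action of the dihedral group~$\dihedral$ of order~$2(n+3)$ on the triangulations of~$\ver{P}$, proving the stronger statement that for \emph{every single orbit}~$\dihedral T$ one has $\sum_{\mu \in \dihedral} |\chor_{\mu(T)}(j)| = (n+2)(n+3)$. The argument is a direct bijection: each of the~$(n+2)(n+3)$ oriented maximal paths~$\pi$ in~$\spine{T}$ is matched with the unique~$\mu_\pi \in \dihedral$ (a specific rotation or reflection determined by where~$j$ falls along~$\pi$) such that~$\mu_\pi(\pi)$ reverses orientation at~$\nod{j}$. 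This yields a one-paragraph computation-free proof, with no induction and no dependence on the particular up/down partition. If you want to salvage your approach, you would need to produce a comparably explicit bijection realizing the single-vertex-flip invariance; absent that, the dihedral-orbit argument is both shorter and more informative.
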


Recall that the barycenter of the permutahedron~$\Perm$ is the point~$\big(\frac{n+2}{2}, \dots, \frac{n+2}{2} \big)$. The dihedral group~$\dihedral$ of order~$2(n+3)$ acts combinatorially on the diagonals of the \mbox{$(n+3)$-gon}~$\ver{P}$, and therefore on its triangulations. The following statement implies Theorem~\ref{theo:barycenter} and was already observed and proven by~\cite{HohlwegLortieRaymond}. We provide a new elementary proof based on spines.

\begin{theorem}[\protect{\cite[Thm.~3.4]{HohlwegLortieRaymond}}]
\label{theo:barycenterOrbits}
For any triangulation~$T$ of~$\ver{P}$, the barycenter of the vertices of~$\Asso$ corresponding to the triangulations of the orbit~$\dihedral T$ coincides with the barycenter of the permutahedron~$\Perm$.
\end{theorem}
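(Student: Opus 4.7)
Every vertex $\b{x}(T')$ of $\Asso$ lies on the hyperplane $H = H^=([n+1])$, so the orbit barycenter also lies on $H$ and has coordinate sum $\binom{n+2}{2}$. Hence it suffices to prove that $\sum_{T' \in \dihedral T} x_j(T')$ does not depend on $j \in [n+1]$, since this constraint forces each orbit-sum to equal $|\dihedral T|(n+2)/2$, matching the barycenter of $\Perm$. Using the coordinate formula of Theorem~\ref{theo:associahedron}, this reduces to the single symmetric identity
\[
\sum_{T' \in \dihedral T} |\chor_{T'}(j)| \;=\; \frac{n+2}{2}\,|\dihedral T| \qquad \text{for every } j \in [n+1],
\]
which is the claim directly for $j \in D$ and becomes the claim after applying $x_j(T') = (n+2) - |\chor_{T'}(j)|$ for $j \in U$.

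To prove this identity I would expand $|\chor_{T'}(j)|$ as a sum over the set $\Pi$ of undirected maximal paths in $\spine{T'}$. The crucial feature of the spine viewpoint is that the blossoms of any spine of a triangulation of~$\ver{P}$ are canonically in bijection with the $n+3$ boundary edges of $\ver{P}$, so $\Pi$ is a common indexing set---the $\binom{n+3}{2}$ unordered pairs of boundary edges of $\ver{P}$---shared by all $T'$. Interchanging the order of summation yields
\[
\sum_{T' \in \dihedral T} |\chor_{T'}(j)| \;=\; \sum_{\pi \in \Pi}\, N_j(\pi),
\qquad
N_j(\pi) \;\eqdef\; \big|\set{T' \in \dihedral T}{j \in \chor_{T'}(\pi)}\big|.
\]

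The last step is to exploit the dihedral symmetry. The group $\dihedral$ acts on the boundary edges of $\ver{P}$ and hence on $\Pi$, with orbits indexed by the cyclic distance between the two boundary edges in a pair. For each such orbit $O \subseteq \Pi$, I plan to construct a bijection between the triangulations in $\dihedral T$ contributing to $\sum_{\pi \in O} N_j(\pi)$ and those contributing to $\sum_{\pi \in O} N_{j'}(\pi)$, using a suitable dihedral element together with the $\dihedral$-invariance of the orbit $\dihedral T$. The point is that the condition $j \in \chor_{T'}(\pi)$ is purely local: by Lemma~\ref{lem:RHS} and the spine description of $\chor(j)$ preceding Theorem~\ref{theo:associahedron}, it only involves the triangle of $T'$ having $\ver{j}$ as middle vertex and on which side of~$\pi$ the vertex~$\ver{j}$ lies.

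The main obstacle is that the dihedral group does not act directly on the coordinate set $[n+1]$: the labels $0$ and $n+2$ are excluded, and the up/down partition of $[n+1]$ is not preserved by rotations. The spine viewpoint is precisely what makes the bijection tractable by reducing a global polygon matching to a local matching at the node $\nod{j}$ for one $\Pi$-orbit at a time; the $\dihedral$-symmetry of $\Pi$ then forces the orbit-wise count to be coordinate-independent, and summing over the $\lfloor(n+3)/2\rfloor$ cyclic-distance orbits of $\Pi$ delivers the required identity.
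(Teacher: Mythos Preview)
Your reduction is correct and matches the paper exactly: both arrive at the single identity
\[
\sum_{\mu \in \dihedral} |\chor_{\mu(T)}(j)| \;=\; (n+2)(n+3)
\qquad\text{for every } j \in [n+1],
\]
(the paper sums over all $2(n+3)$ elements of $\dihedral$ rather than over the orbit $\dihedral T$; this is cleaner because it absorbs the stabiliser and makes the target value $(n+2)(n+3)$ independent of $T$). Your idea of expanding $|\chor_{T'}(j)|$ over maximal paths indexed by pairs of boundary edges is also the paper's idea.

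The gap is precisely at the step you label ``I plan to construct a bijection \ldots\ using a suitable dihedral element''. You have correctly located the obstacle (the dihedral group does not act on the label set $[n+1]$), but you have not said how to overcome it, and the orbit-by-orbit decomposition of $\Pi$ by cyclic distance does not by itself produce the required map. What the paper supplies here is concrete and short: work with \emph{oriented} maximal paths (there are $(n+2)(n+3)$ of them, which is the target value), and for a fixed oriented path $\pi = \delta_1^* \cdots \delta_\ell^*$ in $\spine{T}$ record the increasing sequence $n_k$ of vertices of $\ver{P}$ strictly behind $\delta_k$ as you traverse $\pi$. The unique index $k$ with $n_k < j \le n_{k+1}$ singles out the common vertex $i_k$ of $\delta_k$ and $\delta_{k+1}$; then the rotation (or reflection, depending on whether $j$ is up/down and whether $i_k$ lies left/right of $\pi$) sending $i_k$ to $j$ is the unique $\mu_\pi \in \dihedral$ for which $\mu_\pi(\pi)$ reverses orientation at $\nod{j}$ while traversing left to right. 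This assignment $\pi \mapsto (\mu_\pi, \mu_\pi(\pi))$ is the bijection you are looking for, and it makes the cyclic-distance stratification of $\Pi$ unnecessary. Without this explicit construction your proposal is an outline rather than a proof; with it, your argument becomes exactly the paper's.
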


\begin{proof}
Since we work with all the triangulations of the orbit~$\dihedral T$ at the same time, we refine the earlier notation~$\chor(j)$ and write~$\chor_T(j)$ for the number of maximal paths in the spine~$\spine{T}$ whose edge orientation is reversed at node~$\nod{j}$.

According to the definition of the vertices of~$\Asso$ and as observed in~\cite{HohlwegLortieRaymond}, it suffices to prove that
\[
\sum_{\mu \in \dihedral} |\chor_{\mu(T)}(j)| = (n+2)(n+3),
\]
for any given~$j \in [n+1]$. On the right-hand-side of the previous expression, we recognize the number of oriented maximal paths in the spine~$\spine{T}$. Consider such a path~$\pi$ from a blossom~$u$ to a blossom~$v$. Let~$\delta_1, \dots, \delta_\ell$ denote the diagonals of~$T$ such that~$\pi = \delta_1^*\dots\delta_\ell^*$. Here, both~$\delta_1$ and~$\delta_\ell$ are boundary diagonals of~$\ver{P}$. For~$k \in [\ell]$, we let~$n_k$ be the number of vertices of~$\ver{P}$ in the open half-space delimited by~$\delta_k$ and containing the initial blossom~$u$. Observe that~$n_k$ is strictly increasing from~$n_1 = 0$ to~$n_\ell = n+1$. We also denote by~$i_k$ the common endpoint of the diagonals~$\delta_k$ and~$\delta_{k+1}$.

\begin{figure}
  \vspace*{.2cm}
  \centerline{
	\begin{overpic}[width=\textwidth]{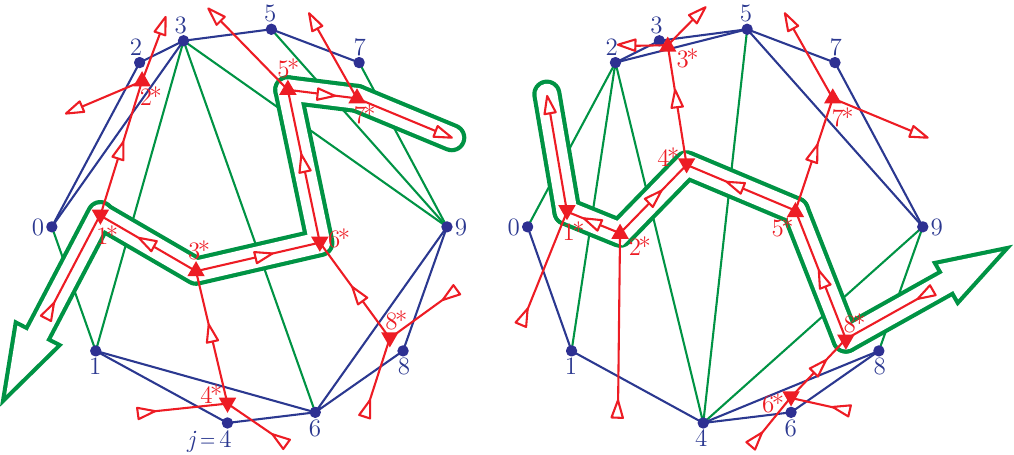}
	    \put(42.5,26){\color{darkgreenpic}$\delta_1$}
	    \put(37.5,29){\color{darkgreenpic}$\delta_2$}
	    \put(36,25){\color{darkgreenpic}$\delta_3$}
	    \put(35,22.5){\color{darkgreenpic}$n_3 \! = \! 2$}
	    \put(17.2,45.7){\color{darkbluepic}$i_3$}
	    \put(17.55,43.4){\color{darkbluepic}\rotatebox{90}{$=$}}
	    \put(28.4,13.5){\color{darkgreenpic}$\delta_4$}
	    \put(29.2,11){\color{darkgreenpic}$n_4 \! = \! 4$}
	    \put(11.3,14){\color{darkgreenpic}$\delta_5$}
	    \put(6.5,11.5){\color{darkgreenpic}$\delta_6$}
  	\end{overpic}
  }
  \caption{A maximal path~$\pi$ in the spine of the triangulation~$T\ex$ of \fref{fig:flip}\,(left). Considering the down label~$j = 4$, we have~$k = 3$ and~$i_k = 3$ is to the right of~$\pi$. Therefore, the combinatorial rotation of the triangulation~$T\ex$ which sends vertex~$3$ to vertex~$4$ sends~$\pi$ to a path with two incoming arcs at~$\nod{4}$.}
  \label{fig:barycenter}
\end{figure}

Let~$k \in [\ell-1]$ be such that~$n_k < j \le n_{k+1}$. Assume first that~$j$ is a down label and that~$i_k$ is to the right of the oriented path~$\pi$. This situation is illustrated on \fref{fig:barycenter}. Let~$\rho$ denote the rotation of~$\dihedral$ which sends index~$i_k$ to~$j$. We claim that the path~$\rho(\pi)$ in the spine~$\spine{\rho(T)}$ changes orientation at node~$\nod{j}$ and traverses from left to right at that node. Indeed, since~$n_k < j$, the diagonal~$\rho(\delta_k)$ is incident to~$j$ on its left, and since~$j \le n_{k+1}$, the diagonal~$\rho(\delta_{k+1})$ is incident to~$j$ on its right. This implies that~$\rho(\delta_k)$ and~$\rho(\delta_{k+1})$ are the left and right bottom diagonals of the triangle~$\nod{j}$ of~$\rho (T)$, and thus that~$\rho(\delta_k)^*$ and~$\rho(\delta_{k+1})^*$ are respectively the left and the right incoming arcs of~$\nod{j}$ in~$\spine{\rho(T)}$. Observe moreover that~$\rho$ is the unique element of~$\dihedral$ such that the path~$\rho(\pi)$ in the spine~$\spine{\rho(T)}$ changes orientation while traversing from left to right at node~$\nod{j}$. With the same arguments, we prove that~$\rho$ is the unique element of~$\dihedral$ such that the path~$\rho(\pi)$ in the spine~$\spine{\rho(T)}$ changes orientation while traversing from left to right at node~$\nod{j}$ in the situation when~$j$ is an up label and~$i_k$ is on the left of the oriented path~$\pi$. Finally, if either $j$ is an up label or~$i_k$ lies on the left of the oriented path~$\pi$, but not both, we prove similarly that the reflection~$\tau$ of~$\dihedral$ which sends~$i_k$ to~$j$ is the unique transformation of~$\dihedral$ such that the path~$\tau(\pi)$ in the spine~$\spine{\tau(T)}$ changes orientation while traversing from left to right at node~$\nod{j}$.

Therefore, for each oriented maximal path~$\pi$ in~$\spine{T}$, we obtain a unique element~$\mu_\pi$ of~$\dihedral$ such that~$\mu_\pi(\pi)$ changes orientation while traversing from left to right at node~$\nod{j}$. Moreover, for any~$\pi \ne \pi'$, we clearly have either~$\mu_\pi \ne \mu_{\pi'}$ or~$\mu_\pi(\pi) \ne \mu_{\pi'}(\pi')$. This ensures that, for all~$j \in [n+1]$,
\[
\sum_{\mu \in \dihedral} |\chor_{\mu(T)}(j)| = (n+2)(n+3). \qedhere
\]
\end{proof}


\subsection{Narayana numbers}

In this section, we use that the entry~$h_\ell(\Asso)$ of the $h$-vector of the associahedron yields the Narayana number~$\Narayana(n,\ell)$, see~\cite[Sect.~10.2]{PostnikovReinerWilliams} and~\cite{Narayana}. As a consequence, we relate combinatorial properties of spines to the Narayana numbers. 

The \defn{$f$-vector} of a polytope~$P \subset \R^n$ is the vector~$f(P) \eqdef (f_0(P), f_1(P), \dots, f_n(P))$ whose $k$\ordinal{} coordinate~$f_k(P)$ is the number of $k$-dimensional faces of~$P$. For example, $f_k(\Perm)$ is the number of ordered partitions of~$[n+1]$ with~$n+1-k$ parts, while~$f_k(\Asso)$ is the number of dissections of~$\ver{P}$ into~$n+1-k$ cells. The \defn{$f$-polynomial} of~$P$ is the polynomial~$f_P(X) \eqdef \sum_{k = 0}^n f_k(P) \, X^k$.

The \defn{$h$-vector}~$h(P) \eqdef (h_0(P), h_1(P), \dots, h_n(P))$ and the \defn{$h$-polynomial}~$h_P(X) \eqdef \sum_{\ell = 0}^n h_\ell(P) \, X^\ell$ of a \emph{simple} polytope~$P$ are defined by the relation
\[
f_P(X) = h_P(X+1),
\]
or equivalently by the equalities
\[
f_k(P) = \sum_{\ell=0}^n \binom{\ell}{k} h_\ell(P) \quad \text{for } 0 \le k \le n
\qquad\text{or}\qquad
h_\ell(P) = \sum_{k=0}^n (-1)^{k+\ell} \binom{k}{\ell} f_k(P) \quad \text{for } 0 \le \ell \le n.
\]
For example, $h_\ell(\Perm)$ is the \defn{Eulerian number}
\[
h_\ell(\Perm)=\Euler(n+1,\ell)=\sum_{k=0}^\ell(-1)^k{{n+2}\choose{k}}(\ell+1-k)^{n+1},
\]
that is, the number of permutations of~$[n+1]$ with $\ell$ descents, while~$h_\ell(\Asso)$ is the \defn{Narayana number}~\cite{Narayana}
\[
h_\ell(\Asso) = \Narayana(n,\ell) \eqdef \frac{1}{n} \binom{n}{\ell} \binom{n}{\ell-1}.
\]

Given any simple polytope~$P \subset \R^n$ and any generic linear functional~$\psi \in (\R^n)^*$, the $\ell$\ordinal{} entry~$h_\ell(P)$ of the $h$-vector of~$P$ equals the number of vertices of out-degree~$\ell$ in the $1$-skeleton of~$P$ oriented by increasing values of~$\psi$. In particular, this ensures that the $h$-vector is symmetric (since it gives the same vector for the functionals~$\psi$ and~$-\psi$). Application of the linear functional~$\b{x} \mapsto \dotprod{\b{U}}{\b{x}}$ to the $1$-skeleton of~$\Asso$ yields the following description of the Narayana numbers using Lemma~\ref{lem:slopeIncreasing}. We say that an arc of a spine~$\spine{T}$ is \defn{ordered} if its source is smaller than its target, and \defn{reversed} otherwise.

\begin{proposition}
\label{prop:Narayana}
For any polygon~$\ver{P}$, the number of triangulations~$T$ of~$\ver{P}$ whose spine~$\spine{T}$ has $\ell$ ordered arcs and $n-\ell$ reversed arcs is the Narayana number~$\Narayana(n,\ell)$.
\end{proposition}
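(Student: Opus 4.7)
The plan is to invoke the standard identity that, for any simple polytope $P$ and any generic linear functional $\psi$, the entry $h_\ell(P)$ equals the number of vertices of $P$ whose out-degree is $\ell$ in the $1$-skeleton oriented by increasing values of $\psi$. Since the preceding discussion establishes $h_\ell(\Asso) = \Narayana(n,\ell)$, the proposition will follow once I exhibit a generic functional on~$\Asso$ whose induced out-degree at~$\b{x}(T)$ equals the number of ordered arcs in~$\spine{T}$. The natural candidate is $\psi \eqdef \dotprod{\b{U}}{\cdot}$ from Section~\ref{subsec:slopeIncreasingFlip}, whose induced orientation of the $1$-skeleton is precisely the slope increasing flip graph~$\flipGraph(\ver{P})$.

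First I would check that $\psi$ is generic on~$\Asso$. By Lemma~\ref{lem:flip}, every edge of~$\Asso$ is parallel to some $e_i - e_j$ with $i \ne j$, and the quick computation $\dotprod{\b{U}}{e_i - e_j} = 2(j - i) \ne 0$ shows that $\psi$ strictly separates the two endpoints of every edge of~$\Asso$. This suffices to apply the $h$-vector identity and conclude that $h_\ell(\Asso)$ equals the number of triangulations~$T$ for which $\b{x}(T)$ has out-degree~$\ell$ in~$\flipGraph(\ver{P})$.

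The key remaining step is to identify the out-degree of~$\b{x}(T)$ in~$\flipGraph(\ver{P})$ with the number of ordered arcs of~$\spine{T}$. By Lemma~\ref{lem:slopeIncreasing}, an outgoing edge at~$\b{x}(T)$ corresponds precisely to an internal diagonal $\delta \in T$ whose spine arc $\delta^*$ is directed from~$\nod{i}$ to~$\nod{j}$ with $i<j$, i.e., is ordered. Since the arcs of $\spine{T}$ between internal nodes are in bijection with the $n$ internal diagonals of~$T$, and each such arc is either ordered or reversed, the out-degree of~$\b{x}(T)$ coincides with the number of ordered arcs of $\spine{T}$, and equivalently with $n$ minus the number of reversed arcs, yielding the proposition. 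No serious obstacle arises here: the statement is essentially a reformulation of the $h$-vector identity in the spine language, and both the genericity verification and the out-degree identification are short consequences of Lemmas~\ref{lem:flip} and~\ref{lem:slopeIncreasing}, which were set up in Section~\ref{sec:flips} precisely to support such arguments.
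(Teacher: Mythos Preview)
Your proposal is correct and follows essentially the same approach as the paper: both apply the standard $h$-vector identity for simple polytopes to the functional $\b{x} \mapsto \dotprod{\b{U}}{\b{x}}$ on~$\Asso$, then invoke Lemma~\ref{lem:slopeIncreasing} to identify the out-degree at~$\b{x}(T)$ with the number of ordered arcs of~$\spine{T}$. Your explicit genericity check via $\dotprod{\b{U}}{e_i-e_j} = 2(j-i) \ne 0$ is a nice addition that the paper leaves implicit.
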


The symmetry of the $h$-vector immediately leads to the following corollary.

\begin{corollary}
For any polygon~$\ver{P}$, there are as many triangulations~$T$ of~$\ver{P}$ whose spine~$\spine{T}$ has $\ell$ ordered arcs and $n-\ell$ reversed arcs as triangulations~$T$ of~$\ver{P}$ whose spine~$\spine{T}$ has $n-\ell$ ordered arcs and $\ell$ reversed arcs.
\end{corollary}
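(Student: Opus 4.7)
The plan is to deduce the corollary from two ingredients: Proposition~\ref{prop:Narayana} identifies the number of triangulations with $\ell$ ordered (and $n-\ell$ reversed) arcs with $h_\ell(\Asso)$; and the $h$-vector of any simple polytope satisfies the Dehn--Sommerville symmetry $h_\ell(P)=h_{n-\ell}(P)$. Substituting the second into the first gives exactly the claimed equality of counts.

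I would justify the $h$-vector symmetry in keeping with the paragraph immediately preceding Proposition~\ref{prop:Narayana}. For any simple $n$-polytope~$P$ and any generic linear functional~$\psi$, the number of vertices of out-degree~$\ell$ in the $1$-skeleton of~$P$ oriented by~$\psi$ equals $h_\ell(P)$ and is therefore independent of~$\psi$. Applying the same fact to~$-\psi$, which reverses every edge orientation and consequently swaps out-degree~$\ell$ with out-degree~$n-\ell$, yields $h_\ell(P)=h_{n-\ell}(P)$. Specialising to $P=\Asso$ with the linear functional $\b{x}\mapsto\dotprod{\b{U}}{\b{x}}$ and invoking Lemma~\ref{lem:slopeIncreasing} to identify ``outgoing edge at~$T$'' with ``ordered arc of~$\spine{T}$'' then closes the argument via Proposition~\ref{prop:Narayana}.

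A purely bijective alternative, more in line with the spine perspective emphasized throughout the paper, exploits the polygon-independence already contained in Proposition~\ref{prop:Narayana}. Let $\ver{P}'$ denote the polygon obtained from~$\ver{P}$ by reflection through a vertical axis; up and down vertices are preserved, but the vertex labels are transformed by $i\mapsto n+2-i$. The induced bijection $T\mapsto T'$ between triangulations of~$\ver{P}$ and of~$\ver{P}'$ reverses the linear order on labels, so it sends every ordered arc of~$\spine{T}$ to a reversed arc of~$\spine{T'}$ and vice versa. Combined with the fact that by Proposition~\ref{prop:Narayana} these counts depend only on~$n$ (and not on the choice of polygon), this immediately yields the claim.

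There is essentially no obstacle in the proof itself; the choice between the two routes above is largely cosmetic. The only point to verify carefully in the bijective version is that the vertical reflection indeed sends the spine of~$T$ to the spine of~$T'$ while reversing the orientation comparison between endpoints, which is immediate from the definition of~$\spine{T}$ and from Lemma~\ref{lem:refinement}.
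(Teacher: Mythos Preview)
Your primary route is exactly the paper's argument: the sentence preceding the corollary reads ``The symmetry of the $h$-vector immediately leads to the following corollary,'' and the $h$-vector symmetry is justified there just as you do, via the functionals~$\psi$ and~$-\psi$. So the first paragraph of your proposal matches the paper essentially verbatim.

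Your bijective alternative is a genuinely different route and is also correct. The vertical reflection relabels vertices by $i\mapsto n+2-i$, preserves the above/below relation (hence the arc orientations of the spine), and sends the middle vertex~$j$ of a triangle to~$n+2-j$; thus ordered arcs become reversed arcs of the spine of the reflected triangulation in~$\ver{P}'$. Combined with the polygon-independence of the count from Proposition~\ref{prop:Narayana}, this yields the symmetry without invoking Dehn--Sommerville, and in fact gives an independent bijective proof of the Narayana symmetry $\Narayana(n,\ell)=\Narayana(n,n-\ell)$. One small quibble: the citation of Lemma~\ref{lem:refinement} at the end is misplaced, since that lemma concerns refinements of dissections, not the behaviour of spines under reflection; the verification you need follows directly from the definition of the spine (the middle-vertex labeling and the below-to-above orientation of arcs).
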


\begin{exm}{2}{Loday's associahedron, continued}
\label{exm:Loday11}
For Loday's polygon~$\ver{P}\loday$, we obtain the classical description of the Narayana number~$\Narayana(n,\ell)$ as the number of (rooted) binary trees on~$n+1$ nodes with $\ell$ right children and $n-\ell$ left children.
\end{exm}


\section{Spines and existing work}
\label{sec:existingWork}

To conclude this paper, we relate our results to existing work. This enables us to give some taste of the various connections of the associahedron with different mathematical areas, to properly refer to the literature, and to indicate some directions of further research.

\paragraph{Contact graphs of pseudoline arrangements on sorting networks} 
Motivated by polytopal realizations of generalizations of the associahedron (in relation with multitriangulations of polygons), V.~Pilaud and F.~Santos defined the family of brick polytopes of sorting networks~\cite{PilaudSantos-brickPolytope}. A \defn{sorting network}~$\cN$ is a sequence of simple transpositions~${(i, i+1)}$ of~$\fS_{n+1}$. We are interested in all possible reduced expressions for the longest permutation~$w_\circ \in \fS_{n+1}$ which are subsequences of~$\cN$. A network can be visually represented by a geometric diagram, formed by $n+1$ horizontal lines (levels) together with some vertical segments between two consecutive levels (commutators) corresponding to the simple transpositions of the network. The subsequences of the network~$\cN$ which form reduced expressions for~$w_\circ$ appear as crossing points of pseudoline arrangements on the diagram of~$\cN$. V.~Pilaud and F.~Santos associate to each such pseudoline arrangement~$\Lambda$
\begin{enumerate}[(i)]
\item its brick vector~$\brick(\Lambda) \in \R^{n+1}$, whose $i$\ordinal{} coordinate counts the bricks (cells of the network) below the $i$\ordinal{} pseudoline of~$\Lambda$ --- thus explaining the name --- and
\item its (oriented and labeled) contact graph~$\contact{\Lambda}$, which encodes the contacts between the pseudolines of~$\Lambda$.
\end{enumerate}
The brick polytope~$\Brick(\cN)$ of the sorting network~$\cN$ is the convex hull of all the points~$\brick(\Lambda)$. Its properties are determined by the contact graphs~$\contact{\Lambda}$ of the pseudolines arrangements supported by~$\cN$. In particular, the brick vector~$\brick(\Lambda)$ is a vertex of~$\Brick(\cN)$ if and only if the contact graph~$\contact{\Lambda}$ is acyclic, and the normal cone of~$\brick(\Lambda)$ is then the braid cone of the transitive closure of~$\contact{\Lambda}$.

Based on the duality described in~\cite{PilaudPocchiola}, it was observed in~\cite{PilaudSantos-brickPolytope} that all associahedra of C.~Hohlweg and C.~Lange~\cite{HohlwegLange} appear as brick polytopes of certain well-chosen networks. These networks are obtained from the dual pseudoline arrangement of an $(n+3)$-gon~$\ver{P}$ by removing its external hull. A triangulation~$T$ of~$\ver{P}$ corresponds to a pseudoline arrangement~$T^\star$ supported by the diagram corresponding to~$\ver{P}$, and the contact graph of~$T^\star$ is precisely the spine~$\spine{T}$ of~$T$ where we remove all blossoms. Our definition of spines of triangulations is thus a direct primal description of contact graphs of pseudoline arrangements supported by these specific networks.

\paragraph{Graph associahedra, nestohedra, generalized permutahedra}
The graph associahedron of a graph~$G$ is a polytope whose faces correspond to collections of connected subgraphs of~$G$ which are pairwise nested, or disjoint and non-adjacent. Its first polytopal realization was described by M.~Carr and S.~Devadoss~\cite{CarrDevadoss, Devadoss}, and later with other techniques by A.~Postnikov~\cite{Postnikov} and A.~Zelevinsky~\cite{Zelevinsky}. The associahedra~$\Asso$ are graph associahedra for paths. 

Generalizing the construction of C.~Hohlweg and C.~Lange~\cite{HohlwegLange}, V.~Pilaud defines the signed nested complex of a signed tree~$T$ and constructs a signed tree associahedron realizing this simplicial complex~\cite{Pilaud}. The main tool of this construction is the definition of the spine of a nested collection of signed tubes of~$T$, which is based on the spines introduced and developed in this paper.

Graph associahedra are graphical examples of nestohedra, which were studied in particular in~\cite{Postnikov, Zelevinsky}. In turn, nestohedra appear in the work of A.~Postnikov~\cite{Postnikov} as specific examples of generalized permutahedra, obtained from the permutahedron~$\Perm$ by gliding certain facets. Further properties of generalized permutahedra, in particular their relations with the braid arrangement, their normal fans, and their face vectors, were later studied in detail by A.~Postnikov, V.~Reiner and L.~Williams~\cite{PostnikovReinerWilliams}. Slightly extending the arguments in Postnikov's proof, the authors observed that it sometimes suffices to glide some facets to infinity without changing the other facets. This observation provides one more evidence that the construction of C.~Hohlweg and C.~Lange~\cite{HohlwegLange} might be extended to signed versions of nestohedra. We refer to \cite{Pilaud-removahedra} for further details.

\paragraph{Generalized associahedra and cluster algebras}
Generalized associahedra are polytopal realizations of finite type cluster complexes, which arise from the rich combinatorial theory of cluster algebras initiated by S.~Fomin and A.~Zelevinsky in~\cite{FominZelevinsky-clusterAlgebrasI, FominZelevinsky-clusterAlgebrasII}. Finite type cluster algebras are classified by the Cartan-Killing classification for finite crystallographic root systems. For example, the associahedra~$\Asso$ realize the cluster complexes of type~$A$. 

The first polytopal realizations of generalized associahedra were constructed by F.~Chapoton, S.~Fomin and A.~Zelevinsky in~\cite{ChapotonFominZelevinsky}. Extending the construction of~\cite{HohlwegLange}, C.~Hohlweg, C.~Lange and H.~Thomas~\cite{HohlwegLangeThomas} obtained various polytopal realizations, based on geometric and combinatorial properties of Cambrian fans presented below. These realizations are obtained by gliding to infinity certain facets of the Coxeter permutahedron, which is defined as the convex hull of a generic point under the action of the underlying finite Coxeter group. Generalizing the approach of~\cite{ChapotonFominZelevinsky}, S.~Stella~\cite{Stella} recovered these realizations of~\cite{HohlwegLangeThomas} and made precise connections between the geometric properties of these generalized associahedra and the algebraic and combinatorial properties of the corresponding cluster algebras. In their recent preprint~\cite{IgusaOstroff}, K.~Igusa and J.~Ostroff study ``mixed cobinary trees'', which are precisely the spines of the triangulations of~$\ver{P}$. They use these trees to revisit type~$A$ cluster mutation process and its connection to quiver representation theory.

In a different direction, V.~Pilaud and C.~Stump~\cite{PilaudStump-brickPolytope} recently generalized the brick polytope construction of V.~Pilaud and F.~Santos~\cite{PilaudSantos-brickPolytope} mentioned above to spherical subword complexes on finite Coxeter groups, defined by A.~Knutson and E.~Miller~\cite{KnutsonMiller-subwordComplex}. As in type~$A$, cluster complexes can be described as subword complexes of well-chosen words~\cite{CeballosLabbeStump}, and the brick polytopes of these subword complexes provide new descriptions of the generalized associahedra of~\cite{HohlwegLangeThomas}. The main tool for this construction is the root configuration of a facet of the subword complex. In type~$A$, this root configuration is formed by the column vectors of the incidence matrix of the spine of the corresponding triangulation.

\paragraph{Cambrian lattices and fans}
We have seen that the normal fan of the associahedron~$\Asso$ is refined by that of the permutahedron, thus defining a surjective map~$\surjectionPermAsso$ from the permutahedron of~$[n+1]$ to the triangulations of~$\ver{P}$. Moreover, the quotient of the weak order under this map gives the increasing flip lattice, whose Hasse diagram is isomorphic to the $1$-skeleton of the associahedron~$\Asso$ oriented in the direction~$\ver{U}$. In Section~\ref{sec:normalFan} we described how spines combinatorially encode the geometry of the normal fan and the sujection~$\surjectionPermAsso$.

Based on N.~Reading's work on lattice congruences of the weak order on a finite Coxeter group, N.~Reading and D.~Speyer extended these results to the context of finite type cluster algebras~\cite{Reading-latticeCongruences, Reading-CambrianLattices, ReadingSpeyer}. They constructed the Cambrian fans providing various geometric realizations of any cluster complex of finite type~$W$. Subsequently, C.~Hohlweg, C.~Lange and H.~Thomas constructed generalized associahedra with some Cambrian fan as normal fan~\cite{HohlwegLangeThomas}. These fans are refined by the Coxeter fan of~$W$, thus defining a surjective map from~$W$ to the clusters of the cluster algebra for~$W$. The quotient of the weak order on~$W$ under this map is the Cambrian lattice studied by N.~Reading~\cite{Reading-CambrianLattices}, whose Hasse diagram is isomorphic to the $1$-skeleton of the generalized associahedron oriented in the direction~$\ver{U}$. 

In a recent paper~\cite{PilaudStump-ELlabeling}, V.~Pilaud and~C.~Stump studied natural generalizations of the increasing flip order to arbitrary subword complexes. They described in particular four canonical spanning trees of the flip graph of a subword complex, which led to efficient enumeration algorithms of their facets. These results can be applied to obtain efficient enumeration schemes of the triangulations of a polygon.

\paragraph{Barycenters}
We now briefly discuss the history and the different strategies adopted to prove the surprising property that the associahedron~$\Asso$ and the permutahedron~$\Perm$ have the same vertex barycenters. According to J.-L.~Loday~\cite{Loday}, this property was first observed by F.~Chapoton for Loday's associahedron. By way of proof, it is claimed in that same paper~\cite{Loday} that any triangulation~$T$ of~$\ver{P}\loday$ contributes to the barycenter of~$\Asso[\ver{P}\loday]$ as much as the permutations of its fiber~$\surjectionPermAsso^{-1}(T)$ contribute to the barycenter of~$\Perm$. This property would clearly imply that the barycenters of~$\Asso$ and~$\Perm$ coincide, since the projection map~$\surjectionPermAsso$ is surjective from the permutations of~$[n+1]$ to the triangulations of~$\ver{P}$. The argument is however misleading since it already fails as soon as~$n \ge 3$. J.-L.~Loday, informed by C.~Hohlweg around 2007, encouraged him to find a correct argument.

A complete proof of the equality of the barycenters was given for all associahedra~$\Asso$ by C.~Hohlweg, J.~Lortie and A.~Raymond in~\cite{HohlwegLortieRaymond}. The key observation of their proof is that the barycenter of the points of~$\Asso$ corresponding to the orbit of a given triangulation under the natural action of the dihedral group on~$\ver{P}$ already coincides with the barycenter of~$\Perm$. To prove this stronger statement, their method requires two steps: first, they prove the result for Loday's associahedron using an inductive argument on the number of vertices of the polygon~$\ver{P}\loday$, and second, they use a well-chosen map to send an arbitrary realization~$\Asso$ to Loday's realization~$\Asso[\ver{P}\loday]$ preserving the barycenter.

In~\cite{PilaudStump-barycenter}, V.~Pilaud and C.~Stump provide a different proof of the barycenter property, which extends to all finite Coxeter groups and all fairly balanced permutahedra (meaning not only the permutahedron~$\Perm$, but more general ones). They also show that the barycenter of the vertices of the generalized associahedron corresponding to certain orbits of clusters under a natural action already vanishes. These extentions were conjectured by C.~Hohlweg, C.~Lange and H.~Thomas in~\cite{HohlwegLangeThomas, Hohlweg}. The proof in~\cite{PilaudStump-barycenter}, based on brick polytopes~\cite{PilaudSantos-brickPolytope, PilaudStump-brickPolytope}, also requires two steps: it is first proved that all generalized associahedra have the same vertex barycenter, and then that the barycenter of the superposition of the vertex sets of two well-chosen generalized associahedra is the origin.

To our knowledge, the proof presented in this paper is the first direct and elementary proof of the barycenter property for associahedra~$\Asso$ of type~$A$. Although it highly relies on C.~Hohlweg, J.~Lortie and A.~Raymond's observation on barycenters of dihedral orbits, our interpretation of C.~Hohlweg and C.~Lange's associahedra in terms of spines of the triangulations is the key ingredient to simplify the computations.

\paragraph{Cambrian Hopf algebras}
Hopf algebras are vector spaces endowed with a structure of algebra and a structure of coalgebra which are compatible to each other. Such structures appear in particular in combinatorics. Relevant examples are the Hopf algebra on permutations by C.~Malvenuto and C.~Reutenauer~\cite{MalvenutoReutenauer}, the Hopf algebra on binary trees by J.-L.~Loday and M.~Ronco~\cite{LodayRonco}, and the descent algebra on binary sequences by L.~Solomon~\cite{Solomon}. These algebras are closely related to the geometry of the permutahedron~$\Perm$, of Loday's associahedron~$\Asso[\ver{P}\loday]$, and of the parallelepiped~$\Para$.

Extending these constructions, G.~Chatel and V.~Pilaud defined the Cambrian Hopf algebra~\cite{ChatelPilaud}, with basis indexed by spines of triangulations. The geometric and combinatorial properties of spines developed in the present paper are instrumental for the algebraic construction. For example, the definition of the surjections~$\surjectionPermAsso$ and $\surjectionAssoPara$ are at the heart of the construction of the Cambrian algebra. We refer the reader to~\cite{ChatelPilaud} for further details.

\paragraph{Acyclic sets}
The objects studied in this paper have further connections to more surprising topics, which we illustrate here by a single example. In order to rank a list of candidates, voters of a constituency can practice \defn{pairwise majority voting}: each voter chooses a ranking of all candidates, and each pair of candidates is then ranked according to the majority preference of the voters. To avoid intransitive results, one possible solution is to force voters to make their choice among a restricted set of rankings. A subset of permutations which guarantees intransitivity under pairwise majority voting is called an \defn{acyclic set of linear orders}. A natural but still open question is to identify acyclic sets of linear orders of maximum cardinality for a given number of candidates. In the context of higher Bruhat orders, {\'A}.~Galambos and V.~Reiner~\cite{GalambosReiner} studied acyclic sets from a combinatorial perspective and provided a unified description of certain acyclic sets known to social choice theorists. Surprisingly, these specific acyclic sets have an interpretation in our context: each such set corresponds to the set of common vertices of the permutahedron with some associahedron~$\Asso$ studied in~\cite{HohlwegLange}, or, by Proposition~\ref{prop:singletons}, to spines of triangulations of~$\ver{P}$ that are directed paths. Using this interpretation and the framework of sorting networks described earlier, J.-P.~Labb{\'e} and C.~Lange recently derived explicit formulas for the cardinality of these specific acyclic sets~\cite{LabbeLange}. This generalizes a formula of {\'A}.~Galambos and V.~Reiner~\cite{GalambosReiner} for Fishburn's alternating scheme, which turns out to be the largest acyclic set in this family. These formulas can also be extended to count common vertices of Coxeter permutahedra and associahedra of~\cite{HohlwegLangeThomas} for other finite Coxeter groups.


\section*{Acknowledgement}

We are grateful to Christophe Hohlweg for his deep insight on the content of this paper. While discussing a preliminary draft, he suggested to describe directly the surjection from permutations to spines and worked out the details with us, as presented in Section~\ref{subsec:maximalNormalCones}. Fruitful discussions with him also led to the short proof of the barycenter invariance in Section~\ref{subsec:barycenter}. We also thank Cesar Ceballos and all anonymous referees for helpful comments on earlier versions of this manuscript.


\bibliographystyle{alpha}
\bibliography{HLassociahedraRevisited}
\label{sec:biblio}

\end{document}